\documentclass[10pt, reqno]{amsart}
\usepackage{bbm}
\usepackage{cases}
\usepackage{txfonts}
\usepackage{amsfonts}
\usepackage{mathrsfs}
\usepackage{amssymb}
\usepackage{amsmath}
\usepackage{epic}

\setlength{\parindent}{1.5em} \addtolength{\parskip}{7pt}

\newtheorem{proposition}{Proposition}[section]
\newtheorem{lemma}[proposition]{Lemma}
\newtheorem{corollary}[proposition]{Corollary}
\newtheorem{theorem}[proposition]{Theorem}

\theoremstyle{definition}

\theoremstyle{remark}
\newtheorem{remark}[proposition]{Remark}

\newtheorem{notation}[proposition]{Notation}

\newcommand{\thlabel}[1]{\label{th:#1}}
\newcommand{\thref}[1]{Theorem~\ref{th:#1}}
\newcommand{\selabel}[1]{\label{se:#1}}
\newcommand{\seref}[1]{Section~\ref{se:#1}}
\newcommand{\lelabel}[1]{\label{le:#1}}
\newcommand{\leref}[1]{Lemma~\ref{le:#1}}
\newcommand{\prlabel}[1]{\label{pr:#1}}
\newcommand{\prref}[1]{Proposition~\ref{pr:#1}}
\newcommand{\colabel}[1]{\label{co:#1}}
\newcommand{\coref}[1]{Corollary~\ref{co:#1}}

\newcommand{\nolabel}[1]{\label{no:#1}}
\newcommand{\noref}[1]{Notation~\ref{no:#1}}
\def\a{\alpha}
\def\b{\beta}

\def\D{\Delta}
\def\e{\epsilon}

\def\ep{\varepsilon}

\def\g{\gamma}

\def\l{\lambda}

\def\ol{\overline}
\def\op{\oplus}
\def\ot{\otimes}

\def\oo{\infty}

\def\ra{\rightarrow}
\def\s{\sigma}

\def\ti{\times}

\def\<{\leqslant}
\def\>{\geqslant}

\date{}
\begin{document}
\title[Green ring of the category of weight modules]
{Green ring of the category of weight modules over the Hopf-Ore extensions of group algebras}

\author{Hua Sun}
\address{College of Mathematical Science, Yangzhou University,
Yangzhou 225002, China}
\email{997749901@qq.com}

\author{Hui-Xiang Chen}
\address{College of Mathematical Science, Yangzhou University,
Yangzhou 225002, China}
\email{hxchen@yzu.edu.cn}

\subjclass[2010]{16G30, 16T05, 19A22}
\keywords{Hopf-Ore extension, weight module, tensor product module, decomposition rule, Green ring}

\begin{abstract}
In this paper, we continue our study of the tensor product structure of category $\mathcal W$ of weight modules
over the Hopf-Ore extensions $kG(\chi^{-1}, a, 0)$ of group algebras $kG$,
where $k$ is an algebraically closed field of characteristic zero.
We first describe the tensor product decomposition rules for all indecomposable weight modules
under the assumption that the orders of $\chi$ and $\chi(a)$ are different. Then we describe
the Green ring $r(\mathcal W)$ of the tensor category $\mathcal W$. It is shown that $r(\mathcal W)$ is isomorphic to
the polynomial algebra over the group ring $\mathbb{Z}\hat{G}$ in one variable
when $|\chi(a)|=|\chi|=\infty$, and that $r(\mathcal W)$ is isomorphic to the quotient ring
of the polynomial algebra over the group ring $\mathbb{Z}\hat{G}$ in two variables modulo a principle ideal
when $|\chi(a)|<|\chi|=\infty$. When $|\chi(a)|\<|\chi|<\infty$,
$r(\mathcal W)$ is isomorphic to the quotient ring of a skew group ring $\mathbb{Z}[X]\sharp\hat{G}$
modulo some ideal, where $\mathbb{Z}[X]$ is a polynomial algebra over $\mathbb{Z}$ in infinitely many variables.
\end{abstract}
\maketitle

\section{Introduction and Preliminaries}\selabel{1}
In \cite{SunChen}, we studied the tensor product structure of the category of finite dimensional weight modules
over the Hopf-Ore extensions $kG(\chi^{-1}, a, 0)$ of group algebras $kG$.
Hopf-Ore extensions were introduced and studied by Panov in \cite {Pa}. Krop, Radford and Scherotzke
studied finite dimensional rank one pointed Hopf algebras over an algebraically closed field in \cite{KR} and \cite{Sc},
respectively. They showed that a finite dimensional rank one pointed Hopf algebra over an algebraically closed
field is isomorphic to a quotient of a Hopf-Ore extension of a group algebra.
This result was generalized to the case of such Hopf algebras over an arbitrary field by Wang, You and Chen in \cite{WangYouChen}.
They also studied the representations of the Hopf-Ore extensions of
group algebras and rank one pointed Hopf algebras over an arbitrary field, and classified the finite dimensional
indecomposable weight modules over such Hopf algebras \cite{WangYouChen}.
In \cite{ChVOZh} and \cite{LiZhang}, the authors studied respectively
the Green rings the Taft algebras and the generalized Taft algebras based on the Cibil's work \cite{Cib}.
Taft algebras and generalized Taft algebras are finite dimensional rank one pointed Hopf algebras.
Wang, Li and Zhang \cite{WangLiZhang2014, WangLiZhang2016} studied the Green rings of finite dimensional
rank one pointed Hopf algebras over an algebraically closed field of characteristic zero.
In \cite{SunChen}, we described the decomposition rules
for tensor product of finite dimensional indecomposable weight modules over the Hopf-Ore extensions
$kG(\chi^{-1}, a, 0)$ of group algebras $kG$ in case $|\chi|=|\chi(a)|$, where the ground field $k$
is an algebraically closed field of characteristic zero.

In this paper, we continue the study of the tensor product structure of the category $\mathcal W$
of finite dimensional weight modules over the Hopf-Ore extensions $kG(\chi^{-1}, a, 0)$ of group algebras $kG$
over an algebraically closed field $k$ of characteristic zero,
but concentrate on the decomposition rules for tensor product modules in case $|\chi|\neq|\chi(a)|$,
and the Green rings of $r(\mathcal W)$ in the both cases: $|\chi|=|\chi(a)|$ and $|\chi|\neq|\chi(a)|$.
The finite dimensional indecomposable weight modules can be clarified
into two types: nilpotent type and non-nilpotent type. For the tensor products of two nilpotent type modules, or a nilpotent type
module with a non-nilpotent one, the decomposition rules for $|\chi|\neq|\chi(a)|$ are similar to those in the case of
$|\chi|=|\chi(a)|$. However, as we shall see in the case of the tensor products of two non-nilpotent type modules,
the decomposition rules for $|\chi|\neq|\chi(a)|$ are much more complicated than those for $|\chi|=|\chi(a)|$.
The paper is organized as follows. In this section, we recall the Hopf algebra structure of
$H=kG(\chi^{-1}, a, 0)$ and some notations.
In \seref{2}, we recall the classification of finite dimensional indecomposable
weight modules over $H$. In \seref{3}, we consider the tensor products of finite dimensional indecomposable weight modules
over $H$ in case $|\chi|\neq|\chi(a)|$. We give the decomposition rules for all such tensor product modules.
In \seref{4}, we investigate the Green ring $r(\mathcal W)$ of the tensor category $\mathcal W$.
We describe the Green ring $r(\mathcal W)$ in terms of polynomial ring and skew group ring. $r(\mathcal W)$ is a commutative ring
when $|\chi|=\infty$, but $r(\mathcal W)$ is not when $|\chi|<\infty$.
It is shown that $r(\mathcal W)$ is isomorphic to the polynomial algebra over the group ring $\mathbb{Z}\hat{G}$ in one variable
in case $|\chi(a)|=|\chi|=\infty$, and that $r(\mathcal W)$ is isomorphic to the quotient ring
of the polynomial algebra over $\mathbb{Z}\hat{G}$ in two variables modulo a principle ideal
in case $|\chi(a)|<|\chi|=\infty$. For the case of $|\chi(a)|\<|\chi|<\infty$,
we show that $r(\mathcal W)$ is isomorphic to the quotient ring of a skew group ring $\mathbb{Z}[X]\sharp\hat{G}$
modulo some ideal, where $\mathbb{Z}[X]$ is a polynomial algebra over $\mathbb{Z}$ in infinitely many variables.

Throughout, we work over a field $k$. Unless otherwise stated, all algebras, coalgebras,
Hopf algebras and modules are vector spaces over $k$. All linear maps mean $k$-linear maps,
dim and $\otimes$ mean dim$_k$ and $\otimes_k$, respectively.
Our references for basic concepts and notations about representation theory and Hopf algebras
are \cite{ARS, Kas, Mo}.
In particular, for a Hopf algebra, we will use $\ep$, $\D$ and $S$ to denote the counit,
comultiplication and antipode, respectively. Let $k^{\ti}=k\setminus\{0\}$.
For a group $G$, let $\hat{G}$ denote the group of characters of $G$ over $k$,
and let $Z(G)$ denote the center of $G$. For a Hopf algebra $H$,
an $H$-module means a left $H$-module. Let $\mathbb Z$ denote all integers.

For a Hopf algebra $H$, the category mod$H$ of finite dimensional $H$-module is a tensor (or monoidal) category \cite{Kas, Mo}.
The Green ring (or representation ring) $r(H)$ of $H$ is defined to be the abelian group generated by the
isomorphism classes $[V]$ of $V$ in mod$H$
modulo the relations $[U\oplus V]=[U]+[V]$, $U, V\in{\rm mod}H$. The multiplication of $r(H)$
is determined by $[U][V]=[U\ot V]$, the tensor product of $H$-modules. Then $r(H)$ is an associative ring
with identity. Notice that $r(H)$ is a free abelian group with a $\mathbb Z$-basis
$\{[V]|V\in{\rm ind}(H)\}$, where ${\rm ind}(H)$ denotes the category
of indecomposable objects in mod$H$.

Let $G$ be a group and $a\in Z(G)$. Let $\chi\in\hat{G}$ with $\chi(a)\neq1$.
The Hopf-Ore extension $H=kG(\chi^{-1}, a, 0)$ of the group
algebra $kG$ can be described as follows. $H$ is generated, as an algebra, by $G$ and $x$ subject to
the relations $xg=\chi^{-1}(g)gx$ for all $g\in G$. The coalgebra structure and antipode are given
by
$$\begin{array}{lll}
\D(x)=x\ot a+1\ot x,& \ep(x)=0,& S(x)=-xa^{-1},\\
\D(g)=g\ot g,& \ep(g)=1,& S(g)=g^{-1},\\
\end{array}$$
where $g\in G$.
$H$ has a $k$-basis $\{gx^i \mid g\in G,i\>0\}$.

For any positive integer $n$ and a scale $\a\in k$, let $J_n(\a)$ be the $n\times n$-matrix
$$\left(\begin{array}{cccc}
\a&&&\\
1&\a&&\\
&\ddots&\ddots&\\
&&1&\a\\
\end{array}
\right)$$
over $k$, the Jordan block matrix.

Let $0\not=q\in k$. For any nonnegative integer $n$, define $(n)_q$ by $(0)_q=0$ and $(n)_q=1+q+\cdots +q^{n-1}$ for $n>0$.
Observe that $(n)_q=n$ when $q=1$, and
$$
(n)_q=\frac{q^n-1}{q-1}
$$
when $q\not= 1$.
Define the $q$-factorial of $n$ by
$(0)!_q=1$ and
$(n)!_q=(n)_q(n-1)_q\cdots (1)_q$ for $n>0$.
Note that $(n)!_q=n!$ when $q=1$, and
$$
(n)!_q=
\frac{(q^n-1)(q^{n-1}-1)\cdots (q-1)}{(q-1)^n}
$$
when $n>0$ and $q\not= 1$.
The  $q$-binomial coefficients
$\binom{n}{i}_q$
are defined inductively as follows for $0\leqslant i\leqslant n$:
$$
\binom{n}{0}_q=1=\binom{n}{n}_q
\quad\quad
\mbox{ for } n\geqslant 0,$$
$$
\binom{n}{i}_q=q^i\binom{n-1}{i}_q+\binom{n-1}{i-1}_q
\quad \quad
\mbox{ for } 0< i< n.$$
It is well-known that $\binom{n}{i}_q$
is a polynomial in $q$ with integer coefficients and with value at $q=1$
equal to the usual binomial coefficient $\binom{n}{i}$, and that
$$
\binom{n}{i}_q
=\frac{(n)!_q}{(i)!_q(n-i)!_q}
$$
when
$(n-1)!_q\not = 0$ and $0<i<n$
(see \cite[Page 74]{Kas}).

\section{Indecomposable weight modules }\selabel{2}

In this section, we recall the finite dimensional indecomposable weight $H$-modules
(see \cite{WangYouChen, SunChen}. We still assume that $\chi^{-1}(a)\neq 1$,
and use the notations of last section. Throughout the following, let
$H=kG(\chi^{-1}, a, 0)$ be the Hopf algebra given as in the last section.
For an $H$-module $M$ and $m\in M$, let $\langle m\rangle$ denote the submodule of $M$ generated by $m$.

Let $M$ be an $H$-module. For any $\l \in \hat{G}$,
let $M_{(\l)}=\{v\in M\mid g\cdot v=\l (g)v,\, g\in G\}$.
Each nonzero element in $M_{(\l)}$ is called a {\it weight vector of weight $\l$} in $M$.
One can check that $\op _{\l \in \hat{G}} M_{(\l)}$
is a submodule of $M$.
Let $\Pi(M)=\{\l \in \hat{G}\mid M_{(\l)}\neq 0\}$, which is
called the {\it weight space} of $M$.
$M$ is said to be a {\it weight module} if $M=\op _{\l\in \Pi(M)}
M_{(\l)}$. Let $\mathcal W$ be the full subcategory of mod$H$ consisting of all finite dimensional weight modules.
Then $\mathcal W$ is a tensor subcategory of mod$H$.

For any $\l \in \hat{G}$ and $t\in\mathbb{Z}$ with $t\>1$, let $V_t(\l)$ be a $t$-dimensional $k$-space
with a basis $\{m_0, m_1, \cdots, m_{t-1}\}$.
Then one can easily check $V_t(\l)$ is an $H$-weight modules with the action determined by
$$ g\cdot m_i=\chi^i(g)\l(g)m_i,\  x\cdot m_i=m_{i+1}, 0\<i\<t-2, \ x\cdot m_{t-1}=0.$$

When $|\chi|={\ol s}<\infty$, for any $\l\in \hat{G}$, $\b \in k$, $t\in\mathbb{Z}$ with $t\>1$, let $V_t(\l,\b)$
be a vector space of dimension $t{\ol s}$ with a basis $\{m_0, m_1, \cdots, m_{t{\ol s}-1}\}$ over $k$.
Assume that $(y-\b)^t=y^t-\sum_{j=0}^{t-1}\a_jy^j$, where $\a_0, \a_1, \cdots, \a_{t-1}\in k$.
Then one can easily check that $V_t(\l,\b)$ is an $H$-module with the action determined by
$$ g\cdot m_i=\chi^i(g)\l(g)m_i,\ \ x\cdot m_i=\left\{\begin{array}{ll}
m_{i+1},&0\<i\<t{\ol s}-2\\
\sum_{j=0}^{t-1}\a_j m_{j{\ol s}},&i=t{\ol s}-1\\
\end{array}\right.,$$
where $0\<i\<t{\ol s}-1$, $g\in G$. Obviously, $V_t(\l, \b)$ is a weight module
and $V_t(\l, 0)=V_{t{\ol s}}(\l)$.

Let $\langle\chi\rangle$ denote the subgroup of
$\hat{G}$ generated by $\chi$, and $[\l]$ denote the image of $\l$ under the canonical
epimorphism $\hat{G}\ra\hat{G}/\langle\chi\rangle$.
Let $\e$ denote the identity element of the group $\hat{G}$, i.e., $\e(g)=1$, $\forall g\in G$.
The finite dimensional indecomposable weight $H$-modules are classified in \cite{WangYouChen}.
We state the classification (in case $k$ is an algebraically closed field) as follows (see \cite{SunChen}).

\begin{proposition}\prlabel{2.1}
Assume that $|\chi|=\infty$. Then $\{V_t(\l)|t\>1, \l\in\hat{G}\}$
is a complete set of finite dimensional indecomposable weight $H$-modules
up to isomorphism.
\end{proposition}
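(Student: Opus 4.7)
The plan is to prove three claims: each $V_t(\l)$ is indecomposable; distinct pairs $(t,\l)$ give non-isomorphic modules; and every finite dimensional indecomposable weight $H$-module is isomorphic to one of the $V_t(\l)$.

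For indecomposability, the hypothesis $|\chi|=\oo$ guarantees that the characters $\l,\chi\l,\dots,\chi^{t-1}\l$ of $V_t(\l)$ are pairwise distinct, so the weight space $V_t(\l)_{(\chi^i\l)}$ is the one-dimensional line $km_i$. Any decomposition of $V_t(\l)$ as a direct sum of nonzero $H$-submodules respects the $G$-weight grading, placing each $m_i$ into one summand; since $x\cd m_i=m_{i+1}$ links consecutive $m_i$'s, they must all lie in the same summand. Non-isomorphism of distinct $V_t(\l)$ then follows by first reading off $t$ from the dimension, and next recovering $\l$ as the unique weight $\mu\in\Pi(V_t(\l))$ for which $\chi^{-1}\mu\notin\Pi(V_t(\l))$, which exists and is unique precisely because $|\chi|=\oo$.

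For completeness, let $M$ be a finite dimensional indecomposable weight $H$-module. The defining relation $xg=\chi^{-1}(g)gx$ gives $x\cd M_{(\mu)}\ss M_{(\chi\mu)}$, so each coset-sum $M_{[\mu]}:=\op_{j\in\Z}M_{(\chi^j\mu)}$ is an $H$-submodule and $M=\op_{[\mu]\in\hat{G}/\langle\chi\rangle}M_{[\mu]}$. Indecomposability collapses this to $M=M_{[\mu_0]}$ for a single coset; because $|\chi|=\oo$ and $\dim M<\oo$, only finitely many weights occur and they are pairwise distinct, so after relabelling we may assume $\Pi(M)\ss\{\l,\chi\l,\dots,\chi^N\l\}$ with $\l\in\Pi(M)$. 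The $H$-module structure on $M$ is then encoded by the chain of $k$-linear maps
\[
M_{(\l)}\xrightarrow{x}M_{(\chi\l)}\xrightarrow{x}\cds\xrightarrow{x}M_{(\chi^N\l)}\ra 0,
\]
i.e.\ a finite dimensional representation of the equioriented quiver of type $A_{N+1}$.

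By the classical Gabriel-type classification of indecomposable representations of a linearly oriented $A_n$ quiver, every such indecomposable is an interval representation, with one-dimensional spaces on a consecutive range and identity transition maps, and any such interval is manifestly isomorphic to $V_t(\chi^i\l)$ for the appropriate $t$ and $i$. The main obstacle I anticipate is this last step: either one invokes the $A_n$ classification outright, or one argues directly by induction on $\dim M$ by choosing a weight vector $v\in M_{(\mu)}$ with $\mu$ minimal among $\Pi(M)$ under $\chi$-shift, verifying $\langle v\rangle\cong V_t(\mu)$, and then splitting $\langle v\rangle$ off as a direct summand of $M$; the splitting step is the delicate part, and is typically handled via the projectivity/injectivity of interval representations inside the category of chain representations.
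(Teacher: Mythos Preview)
The paper does not actually prove this proposition: it is stated as a recall of the classification obtained in \cite{WangYouChen} (see also \cite{SunChen}), with no argument given beyond the citation. So there is no ``paper's own proof'' to compare against.

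Your proposed argument is nonetheless a correct and self-contained proof. The indecomposability and non-isomorphism parts are clean; the crucial use of $|\chi|=\oo$ is exactly that the characters $\chi^i\l$ are pairwise distinct, so the weight spaces are one-dimensional and the whole $\langle\chi\rangle$-coset of weights is a copy of $\Z$ rather than a cycle. For the completeness step, your reduction to a representation of an equioriented $A_{N+1}$ quiver is precisely right, and invoking Gabriel's classification (or the elementary interval-module classification for type $A$) is a standard and fully rigorous way to finish. The ``obstacle'' you flag is not a real gap: either route you name works, and in fact the interval modules for linearly oriented $A_n$ are both projective and injective relative to suitable truncations, so the inductive splitting argument goes through without difficulty. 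In short, your proof is sound; the paper simply outsources this step to the literature.
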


\begin{proposition}\prlabel{2.2}
Assume that $|\chi|={\ol s}<\oo$ and $k$ is algebraically closed. Then
$$\left\{V_t(\l), V_t(\s, \b)\left| \l\in\hat{G}, [\s]\in\hat{G}/\langle\chi\rangle, t\>1, \b\in k^{\times}\right.\right\}$$
is a complete set of finite dimensional indecomposable weight $H$-modules up to isomorphism.
\end{proposition}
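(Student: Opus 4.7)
The plan is to establish three things: (i) each listed module is a well-defined finite-dimensional indecomposable weight $H$-module; (ii) every such indecomposable is isomorphic to one in the list; and (iii) the parametrization is irredundant modulo the stated equivalence on $\sigma$. Step (i) is largely a direct check: the $H$-module axioms reduce to verifying the relation $xg=\chi^{-1}(g)gx$ on basis vectors, and the weight decomposition is manifest from the $G$-action. For indecomposability I would compute the endomorphism algebras. Any $H$-endomorphism $\varphi$ of $V_t(\lambda)$ is determined by $\varphi(m_0)$, which must lie in the $\lambda$-weight space $\mathrm{span}\{m_0,m_{\bar{s}},m_{2\bar{s}},\dots\}$; a short calculation gives $\End_H(V_t(\lambda))\cong k[y]/(y^{\lceil t/\bar{s}\rceil})$ with $y$ corresponding to $x^{\bar{s}}$, and similarly $\End_H(V_t(\sigma,\beta))\cong k[y]/((y-\beta)^t)$. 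Both are local rings, so the modules are indecomposable.

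For step (ii), the crucial observation is that since $\chi^{\bar{s}}=\epsilon$, the element $y=x^{\bar{s}}\in H$ is central: $yg=\chi^{-\bar{s}}(g)gy=gy$ and $yx=xy$. Thus, on any $H$-module $M$, left multiplication by $y$ is an $H$-endomorphism. Using algebraic closure of $k$ and finite-dimensionality of $M$, decompose $M=\bigoplus_{\beta\in k}M[\beta]$ into generalized $y$-eigenspaces, each an $H$-submodule. Simultaneously, decompose by $\langle\chi\rangle$-orbits of weights: $M=\bigoplus_{[\sigma]\in\hat G/\langle\chi\rangle}M_{[\sigma]}$ where $M_{[\sigma]}=\bigoplus_{i=0}^{\bar{s}-1}M_{(\chi^i\sigma)}$ is an $H$-submodule (since $x$ shifts weights by $\chi$ and $\chi^{\bar{s}}=\epsilon$). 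If $M$ is indecomposable, both decompositions collapse and $M$ sits in a unique pair $([\sigma],\beta)$.

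It then remains to identify such $M$ with a listed module. When $\beta\neq 0$, $y$ (hence $x$) is invertible on $M$, so each $x\colon M_{(\chi^i\sigma)}\to M_{(\chi^{i+1}\sigma)}$ is an isomorphism and the full $H$-structure is determined by the $k[y]$-module structure on $M_{(\sigma)}$, where $y-\beta$ is nilpotent. Decomposing $M_{(\sigma)}$ into cyclic $k[y]$-summands, each lifts to an $H$-summand of $M$ (its $\chi^i\sigma$-weight part being $x^i$ applied to the summand), so indecomposability forces $M_{(\sigma)}\cong k[y]/((y-\beta)^t)$; picking $m_0$ a cyclic generator and setting $m_i:=x^i m_0$ then yields $M\cong V_t(\sigma,\beta)$.

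When $\beta=0$, $x$ acts nilpotently; choose $t$ maximal with $x^{t-1}M\neq 0$ and a weight vector $v_0\in M_{(\lambda)}$ with $x^{t-1}v_0\neq 0$, so that $\langle v_0\rangle_H\cong V_t(\lambda)$. The main obstacle is then to show $M=\langle v_0\rangle_H$; the cleanest route I see is to view an $H$-weight module supported on the single orbit $[\sigma]$ as a representation of the cyclic quiver $\tilde A_{\bar{s}-1}$ with vertex set $[\sigma]$ and arrows $x$, and invoke the standard classification of its nilpotent indecomposable representations (the ``string modules'' indexed by starting vertex and length), matching exactly the family $\{V_t(\lambda)\}$. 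Finally, step (iii) follows because $[\sigma]$, $\beta$, and $\dim M$ are isomorphism invariants, while the isomorphism $V_t(\sigma,\beta)\cong V_t(\chi\sigma,\beta)$ via the new cyclic generator $xm_0$ (valid because the constant term $\alpha_0=-(-\beta)^t$ of $(y-\beta)^t$ is nonzero) matches the quotient $\hat G/\langle\chi\rangle$.
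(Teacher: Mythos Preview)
The paper does not actually prove \prref{2.2}; it is quoted as a known classification, with the proof deferred to \cite{WangYouChen} (and the specific formulation to \cite{SunChen}). So there is no in-paper argument to compare against.

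That said, your proposal is correct and is essentially the standard route. The organizing observation---that $y=x^{\bar s}$ is central once $|\chi|=\bar s<\infty$, so an indecomposable weight module carries a single generalized $y$-eigenvalue $\beta$ and is supported on a single $\langle\chi\rangle$-orbit of characters---is exactly right. Your treatment of the case $\beta\neq 0$ (use invertibility of $x$ to transport a $k[y]$-decomposition of one weight space to an $H$-decomposition of $M$) and of the case $\beta=0$ (identify weight modules on a fixed orbit with representations of the cyclic quiver $\tilde A_{\bar s-1}$ and invoke the classification of its nilpotent indecomposables) is clean, and the endomorphism-ring computations for step~(i) are accurate.

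One small imprecision in step~(iii): the invariants $[\sigma]$, $\beta$, $\dim M$ that you list do \emph{not} separate $V_t(\lambda)$ from $V_t(\chi\lambda)$ in the nilpotent family. What does separate them is the weight of the one-dimensional top $M/xM$ (equivalently, the starting vertex in your quiver description); you are implicitly using this via the quiver classification, but it would be worth saying so explicitly.
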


\begin{remark}
For any $t\>1$, $\l\in\hat{G}$ and $\b\in k^{\times}$, the linear endomorphism of $V_t(\l)$
induced by the action of $x$ is nilpotent. However, the linear endomorphism of $V_t(\l, \b)$
induced by the action of $x$ is invertible. In the following, $V_t(\l)$ is called a module of
{\it nilpotent type}, and  $V_t(\l, \b)$ is called a module of {\it non-nilpotent type}.
\end{remark}

In what follows, assume that $k$ is an algebraically closed field of characteristic zero.

\section{Decomposition rules for tensor products of weight modules}\selabel{3}

In this section, we will investigate the tensor products of finite dimensional indecomposable weight modules over
$H$, and decompose such tensor products into the direct sum of indecomposable modules
in case $|\chi|\neq |\chi(a)|$, where $|\chi|$ and $|\chi(a)|$ denote the orders of
$\chi$ and $\chi(a)$, respectively.

{\bf Convention}: If $\oplus_{l\leqslant i\leqslant m}M_i$ is a term in a decomposition of a module,
then it disappears when $l>m$. For a module $M$ and a nonnegative integer $r$, let $rM$ denote
the direct sum of $r$ copies of $M$. In particular, $rM=0$ when $r=0$.
Let $V_0(\l)=V_0(\l,\b)=0$ for any $\l\in\hat{G}$ and $\b\in k$.

\subsection{Tensor products $V_n(\l)\ot V_t(\s)$}\selabel{3.1}
In this subsection, we consider the tensor products ot two modules of nilpotent type.
Throughout this subsection, assume that $|\chi(a)|=s<|\chi|\<\infty$.
Let  $q=\chi^{-1}(a)$. Then $q$ is a primitive $s^{th}$ root of unity.
In this case, \cite[Lemmas 3.8, 3.9, 3.10 and 3.11]{SunChen} still hold with the same proofs.

\begin{lemma}\lelabel{3.5}
Let $\l, \s\in\hat{G}$ and $t\in\mathbb Z$ with $t\>1$.\\
{\rm (1)} If $s|t$, then $V_{s+1}(\l)\ot V_t(\s)\cong V_t(\s)\ot V_{s+1}(\l)
\cong V_{t-s}(\chi^{s}\l\s)\oplus V_{t+s}(\l\s)\oplus(\oplus_{i=1}^{s-1}V_t(\chi^i\l\s))$.\\
$(2)$ Assume that $s\nmid t$ and let $t=rs+l$ with $1\<l\<s-1$.\\
\mbox{\hspace{0.4cm}\rm (a)} If $r=0$, then
$V_{s+1}(\l)\ot V_t(\s)\cong V_t(\s)\ot V_{s+1}(\l)
\cong V_{s+l}(\l\s)\oplus(\oplus_{1\<i\<l-1}V_{s}(\chi^i\l\s)).$
\mbox{\hspace{0.4cm}\rm (b)} If $r\>1$, then
$$\begin{array}{rl}
&V_{s+1}(\l)\ot V_t(\s)\cong V_t(\s)\ot V_{s+1}(\l)\\
\cong&V_{t+s}(\l\s)\oplus(\oplus_{1\<i\<l-1}V_{(r+1)s}(\chi^i\l\s))\oplus V_{t+s-2l}(\chi^l\l\s)\\
&\oplus(\oplus_{l+1\<i\<s-1}V_{rs}(\chi^i\l\s))\oplus V_{t-s}(\chi^{s}\l\s).\\
\end{array}$$
\end{lemma}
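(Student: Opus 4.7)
The plan is to exhibit an explicit direct-sum decomposition of $V_{s+1}(\l)\ot V_t(\s)$ by producing a cyclic generator for each summand on the right-hand side. Since $\D(x)=x\ot a+1\ot x$ and $x$ acts nilpotently on each tensor factor, the action of $x$ on $V_{s+1}(\l)\ot V_t(\s)$ is nilpotent, so every indecomposable summand must be of nilpotent type $V_m(\mu)$, and the decomposition is controlled entirely by the Jordan structure of $x$ together with the weights of its generalized eigenvectors.

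The key computational input is the quantum binomial formula
$$
\D(x^k)=\sum_{i=0}^k\binom{k}{i}_q\,x^{k-i}a^{k-i}\ot x^i,
$$
obtained by induction from the $q$-commutation $(1\ot x)(x\ot a)=q(x\ot a)(1\ot x)$ in $H\ot H$, which in turn follows from $xa=\chi^{-1}(a)ax=qax$ in $H$. Because $q$ is a primitive $s$-th root of unity one has $\binom{s}{i}_q=0$ for $1\<i\<s-1$, and the three sub-cases in the statement arise precisely from how this vanishing interacts with the length $t$ of the second factor. Writing $\{m_0,\ldots,m_s\}$ and $\{n_0,\ldots,n_{t-1}\}$ for the standard bases of $V_{s+1}(\l)$ and $V_t(\s)$, each $m_i\ot n_j$ is a weight vector of weight $\chi^{i+j}\l\s$. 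For each putative summand $V_m(\mu)$ in the claim I would write down an explicit weight vector $v_\mu$ of weight $\mu$ as a linear combination of the $m_i\ot n_j$ with coefficients built from $q$-factorials, and verify by direct computation that $x^m\cd v_\mu=0$ while $x^{m-1}\cd v_\mu\neq 0$, so that $\langle v_\mu\rangle\cong V_m(\mu)$, and that the chosen cyclic submodules sum to a direct sum filling the entire $(s+1)t$-dimensional tensor product. A clean way to do the latter is to compute $\dim\ker(x)$ on each weight space of $V_{s+1}(\l)\ot V_t(\s)$ and match it against the number of claimed summands of that weight, and then close the argument with a total dimension check; one verifies in case (2b), for instance, that $(t+s)+(l-1)(r+1)s+(t+s-2l)+(s-1-l)rs+(t-s)=(s+1)t$.

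The symmetry $V_{s+1}(\l)\ot V_t(\s)\cong V_t(\s)\ot V_{s+1}(\l)$ can be checked either by swapping the roles of the two factors in the generator construction, or by invoking an earlier commutativity result in the section. The main obstacle lies in case (2b), where five different types of summands appear simultaneously, and the correct generator for each must be designed so that the vanishing relations $\binom{s}{i}_q=0$ produce exactly the intended truncation of the Jordan chain; the linear-independence check that the cyclic submodules meet only in zero is the most delicate part, and is where careful bookkeeping of the $q$-binomial coefficients really pays off. Cases (1) and (2a) are structurally simpler --- (2a) because $t<s$ already forces every $x^i\cd n_j$ with $i\>s$ to vanish before $\binom{s}{i}_q=0$ is invoked, and (1) because $s\mid t$ makes the edge terms collapse uniformly --- so they serve as useful warm-ups for the combinatorics of (2b).
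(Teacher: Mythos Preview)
Your plan is sound and is the standard direct approach; the paper itself gives no argument here beyond ``It is similar to \cite[Lemma 3.12]{SunChen}'', so there is nothing to compare against except to note that the referenced proof is of exactly this type (explicit weight-vector generators plus a dimension count, resting on the analogues of \cite[Lemmas 3.8--3.11]{SunChen} which the paper records still hold in the present setting).

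One small correction: in your quantum binomial formula the group-like factor sits in the wrong tensor slot. From $\D(x)=x\ot a+1\ot x$ and $(1\ot x)(x\ot a)=q(x\ot a)(1\ot x)$ one gets
\[
\D(x^k)=\sum_{i=0}^k\binom{k}{i}_q\,x^{k-i}\ot a^{k-i}x^i,
\]
with $a^{k-i}$ in the right-hand factor, not $x^{k-i}a^{k-i}\ot x^i$. Since $a$ acts by a scalar on weight vectors this only perturbs the coefficients and does not affect your strategy. The genuinely delicate step you flag---independence of the cyclic submodules---is cleanest not via pairwise intersections but by checking that your chosen generators have linearly independent images in $M/xM$ (equivalently, that you have produced the correct number of Jordan blocks of $x$ on each weight component of $M$); once that is in hand, the total-dimension identity you verified in case (2b) closes the argument.
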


\begin{proof}
It is similar to \cite[Lemma 3.12]{SunChen}.
\end{proof}

\begin{lemma}\lelabel{3.6}
Let $\l, \s\in\hat{G}$ and $n, t\in\mathbb Z$ with $n, t\>1$. Assume $s|t$ and let
$t=rs$ and $n=r's+l$ with $0\<l\<s-1$. Then
$$\begin{array}{rl}
&V_n(\l)\ot V_t(\s)\cong V_t(\s)\ot V_n(\l)\\
\cong&(\oplus_{i=0}^{{\rm min}\{r',r-1\}}\oplus_{0\<j\<l-1}V_{(r+r'-2i)s}(\chi^{j+is}\l\s))
\oplus(\oplus_{0\<i\<{\rm min}\{r,r'\}-1}\oplus_{j=l}^{s-1}V_{(r+r'-1-2i)s}(\chi^{j+is}\l\s)).\\
\end{array}$$
\end{lemma}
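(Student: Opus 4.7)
The plan is to proceed by induction on $n$, with \leref{3.5} as the key tool. The base cases are the range $1\<n\<s$ (corresponding to $r'=0$, $l=n$) together with $n=s+1$ (which is \leref{3.5}(1) itself). For $n\<s$, I would compute the kernel of the $x$-action on $V_n(\l)\ot V_{rs}(\s)$ directly from $\D(x)=x\ot a+1\ot x$: writing a weight vector as $v=\sum_{i,j}c_{ij}m_i\ot m'_j$ and imposing $x\cd v=0$ produces a linear system whose nondegeneracy reduces to the nonvanishing of the $q$-binomial coefficients $\binom{k}{i}_q$ with $k<s$ (where $q=\chi^{-1}(a)$ has order $s$). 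This locates exactly $n$ independent highest weight vectors of distinct weights $\chi^{rs-1+j}\l\s$ for $0\<j\<n-1$, each generating a copy of $V_{rs}(\chi^j\l\s)$; a dimension count then confirms the decomposition.

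For the inductive step with $n>s+1$, I would compute the triple tensor product $V_{s+1}(\e)\ot V_{n-s}(\l)\ot V_{rs}(\s)$ in two associativity orders and invoke the Krull--Schmidt theorem. Bracketing the first two factors and applying \leref{3.5}, one sees $V_{s+1}(\e)\ot V_{n-s}(\l)\cong V_n(\l)\oplus Y$ with $Y$ an explicit direct sum of indecomposables of dimension strictly less than $n$; tensoring with $V_{rs}(\s)$ and applying the inductive hypothesis to $Y\ot V_{rs}(\s)$ gives one expansion of the triple. Bracketing the last two factors first, the inductive hypothesis decomposes $V_{n-s}(\l)\ot V_{rs}(\s)$ into a direct sum of $V_{ks}(\mu)$'s, and then \leref{3.5}(1) applies term-by-term to each $V_{s+1}(\e)\ot V_{ks}(\mu)$, producing a second, fully explicit expansion. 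Equating the two and cancelling common summands isolates $V_n(\l)\ot V_{rs}(\s)$ as the stated double sum. The isomorphism $V_n(\l)\ot V_{rs}(\s)\cong V_{rs}(\s)\ot V_n(\l)$ follows either by running the parallel computation with the two factors swapped, or a posteriori by a character-plus-Jordan-structure argument (both tensor products share the same $\hat{G}$-character and the same multiset of Jordan block sizes of $x$).

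The main obstacle is the combinatorial bookkeeping. The index cutoffs $\min\{r',r-1\}$ and $\min\{r,r'\}-1$ reflect the ``collapse'' of Clebsch--Gordan summands at the edges of the triangle, caused both by the root-of-unity identity $q^s=1$ and by the convention $V_m(\cd)=0$ when $m\<0$; matching the boundary regimes ($r'\>r$ versus $r'<r$) of the two expansions summand-by-summand will require careful case analysis. A preliminary observation is that $x$ acts nilpotently on $V_n(\l)\ot V_{rs}(\s)$---easily checked by expanding $\D(x^{n+rs-1})$ via $q$-binomial coefficients---so no non-nilpotent-type summand $V_u(\mu,\b)$ with $\b\neq 0$ can appear in the decomposition.
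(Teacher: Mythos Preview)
Your proposal is correct and follows essentially the same approach as the paper. The paper's proof simply cites \cite[Lemma~3.13]{SunChen}, whose argument proceeds exactly as you describe: the base case $n\<s$ is handled by a direct highest-weight-vector computation (this is the content of \cite[Lemma~3.9]{SunChen}, which the paper notes still holds here), and the induction step tensors with $V_{s+1}(\e)$, expands in two ways via \leref{3.5}, and cancels by Krull--Schmidt. One minor slip: when $n=s$ you have $r'=1$, $l=0$ rather than $r'=0$, $l=s$, but this does not affect the argument.
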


\begin{proof}
It is similar to \cite[Lemma 3.13]{SunChen}.
\end{proof}

\begin{lemma}\lelabel{3.7}
Let $\l, \s\in\hat{G}$ and $n, r\in\mathbb Z$ with $n\>1$ and $r\>0$.
Assume $s\nmid n$ and let $n=r's+l$ with $1\<l\<s-1$ and $r'\>0$. Then we have
$$\begin{array}{rl}
&V_n(\l)\ot V_{rs+1}(\s)\cong V_{rs+1}(\s)\ot V_n(\l)\\
\cong&(\oplus_{i=0}^{{\rm min}\{r', r\}}V_{(r+r'-2i)s+l}(\chi^{is}\l\s))
\oplus(\oplus_{0\<i\<{\rm min}\{r', r-1\}}\oplus_{1\<j\<l-1}V_{(r+r'-2i)s}(\chi^{j+is}\l\s))\\
&\oplus(\oplus_{0\<i\<{\rm min}\{r', r\}-1}V_{(r+r'-2i)s-l}(\chi^{l+is}\l\s))\\
&\oplus(\oplus_{0\<i\<{\rm min}\{r', r\}-1}\oplus_{l+1\<j\<s-1}V_{(r+r'-1-2i)s}(\chi^{j+is}\l\s)).\\
\end{array}$$
\end{lemma}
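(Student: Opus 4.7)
My plan is to prove the decomposition by induction on $r$, paralleling the strategy of the corresponding lemma in \cite{SunChen} for the case $|\chi|=|\chi(a)|$; \leref{3.5} provides the recursive tool and \leref{3.6} handles those subterms whose size is divisible by $s$.

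The base case $r=0$ is immediate: $V_{rs+1}(\s)=V_1(\s)$ is one-dimensional of weight $\s$, so $V_n(\l)\otimes V_1(\s)\cong V_n(\l\s)$ after a routine rescaling of basis vectors. On the right-hand side of the claimed formula only the $i=0$ summand of the first family survives (the index ranges of the other three families are empty), yielding exactly $V_{r's+l}(\l\s)=V_n(\l\s)$.

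For the inductive step I would apply \leref{3.5}(2)(b) to $V_{s+1}(\e)\otimes V_{(r-1)s+1}(\s)$ (and \leref{3.5}(2)(a) in the case $r=1$) to obtain, in the Green ring, the identity
\[
[V_{s+1}(\e)]\,[V_{(r-1)s+1}(\s)] \;=\; [V_{rs+1}(\s)]+[V_{rs-1}(\chi\s)]+[V_{(r-2)s+1}(\chi^s\s)]+\sum_{i=2}^{s-1}[V_{(r-1)s}(\chi^i\s)].
\]
Solving for $[V_{rs+1}(\s)]$ and multiplying by $[V_n(\l)]$ presents $V_n(\l)\otimes V_{rs+1}(\s)$ (up to known subtractions) as $(V_n(\l)\otimes V_{s+1}(\e))\otimes V_{(r-1)s+1}(\s)$. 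The factor $V_n(\l)\otimes V_{s+1}(\e)$ decomposes by \leref{3.5} itself, and each of its indecomposable summands, once tensored with $V_{(r-1)s+1}(\s)$, is then controlled either by the inductive hypothesis (for summands of size $\equiv 1\pmod s$) or by \leref{3.6} (for summands of size divisible by $s$). The correction pieces $V_n(\l)\otimes V_{(r-2)s+1}(\chi^s\s)$ and $V_n(\l)\otimes V_{(r-1)s}(\chi^i\s)$ come respectively from the inductive hypothesis and from \leref{3.6}.

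The chief obstacle is the remaining correction $V_n(\l)\otimes V_{rs-1}(\chi\s)$: since $rs-1=(r-1)s+(s-1)$, this second factor has residue $s-1\neq 1$ and is not directly covered by the present statement. I would close the recursion by strengthening the inductive claim so as to prove simultaneously a mirror-symmetric decomposition of $V_n(\l)\otimes V_t(\s)$ for the residue class $s-1$; the two formulas exchange the roles of $l$ and $s-l$ and feed into each other through the same \leref{3.5}-based recursion. Once the Green-ring identity is assembled and simplified through a tedious but purely combinatorial reindexing of the summation variables, the claimed direct-sum decomposition of $H$-modules follows from the Krull--Schmidt theorem together with the dimension check $\dim(V_n(\l)\otimes V_{rs+1}(\s))=(r's+l)(rs+1)$, which matches the total dimension of the asserted right-hand side.
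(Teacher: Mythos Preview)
Your overall strategy—an inductive Krull--Schmidt comparison driven by \leref{3.5} and \leref{3.6}—is exactly the paper's, but you have chosen a suboptimal induction variable. The obstacle you flag with $V_n(\l)\otimes V_{rs-1}(\chi\s)$ is genuine when inducting on $r$, and your proposed fix (a simultaneous companion formula for second-factor residue $s-1$) can be carried out, but it is unnecessary extra labour that you have not actually written down.

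The argument to which the paper's one-line proof defers inducts on $r'$ (equivalently on $n$), keeping the second factor $V_{rs+1}(\s)$ fixed. One computes $V_{s+1}(\e)\otimes V_{(r'-1)s+l}(\l)\otimes V_{rs+1}(\s)$ by associating in both orders. On the left-associated side, the only summand of $V_{s+1}(\e)\otimes V_{(r'-1)s+l}(\l)$ whose size is neither a multiple of $s$ nor congruent to $l$ is $V_{r's-l}(\chi^l\l)$; but this summand is still tensored with $V_{rs+1}(\s)$, a module of residue~$1$, so the product $V_{r's-l}(\chi^l\l)\otimes V_{rs+1}(\s)$ is again an instance of the present lemma with smaller first factor $r's-l<n$, hence covered by the induction hypothesis. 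The multiples-of-$s$ summands are handled by \leref{3.6}, and the base case $r'=0$ is \cite[Lemma~3.11]{SunChen} (already noted at the start of the subsection to carry over). No companion statement is required, and the recursion closes without the detour you propose.
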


\begin{proof}
It is similar \cite[Lemma 3.14]{SunChen}.
\end{proof}

\begin{proposition}\prlabel{3.8}
Let $\l, \s\in\hat{G}$ and $n, t\in\mathbb Z$ with $n\>t\>1$.
Assume that $n=r's+l'$ and $t=rs+l$ with $0\<l', l\<s-1$.\\
{\rm (1)} Suppose that $l+l'\<s$. If $l\<l'$ then
$$\begin{array}{rl}
&V_n(\l)\ot V_t(\s)\cong V_t(\s)\ot V_n(\l)\\
\cong&(\oplus_{i=0}^{r}\oplus_{0\<j\<l-1}V_{n+t-1-2is-2j}(\chi^{j+is}\l\s))
\oplus(\oplus_{0\<i\<r-1}\oplus_{l\<j\<l'-1}V_{(r+r'-2i)s}(\chi^{j+is}\l\s))\\
&\oplus(\oplus_{0\<i\<r-1}\oplus_{l'\<j\<l+l'-1}V_{n+t-1-2is-2j}(\chi^{j+is}\l\s))
\oplus(\oplus_{0\<i\<r-1}\oplus_{l+l'\<j\<s-1}V_{(r+r'-1-2i)s}(\chi^{j+is}\l\s)),\\
\end{array}$$
and if $l\>l'$ then
$$\begin{array}{rl}
&V_n(\l)\ot V_t(\s)\cong V_t(\s)\ot V_n(\l)\\
\cong&(\oplus_{i=0}^{r}\oplus_{0\<j\<l'-1}V_{n+t-1-2is-2j}(\chi^{j+is}\l\s))
\oplus(\oplus_{i=0}^{r}\oplus_{l'\<j\<l-1}V_{(r+r'-2i)s}(\chi^{j+is}\l\s))\\
&\oplus(\oplus_{0\<i\<r-1}\oplus_{l\<j\<l+l'-1}V_{n+t-1-2is-2j}(\chi^{j+is}\l\s))
\oplus(\oplus_{0\<i\<r-1}\oplus_{l+l'\<j\<s-1}V_{(r+r'-1-2i)s}(\chi^{j+is}\l\s)).\\
\end{array}$$
{\rm (2)} Suppose that $l+l'\>s+1$ and let $m=l+l'-s-1$. If $l\<l'$ then
$$\begin{array}{rl}
&V_n(\l)\ot V_t(\s)\cong V_t(\s)\ot V_n(\l)\\
\cong&(\oplus_{i=0}^{r}\oplus_{j=0}^mV_{(r+r'+1-2i)s}(\chi^{j+is}\l\s))
\oplus(\oplus_{i=0}^{r}\oplus_{j=m+1}^{l-1}V_{n+t-1-2is-2j}(\chi^{j+is}\l\s))\\
&\oplus(\oplus_{0\<i\<r-1}\oplus_{l\<j\<l'-1}V_{(r+r'-2i)s}(\chi^{j+is}\l\s))
\oplus(\oplus_{0\<i\<r-1}\oplus_{j=l'}^{s-1}V_{n+t-1-2is-2j}(\chi^{j+is}\l\s)),\\
\end{array}$$
and if $l\>l'$ then
$$\begin{array}{rl}
&V_n(\l)\ot V_t(\s)\cong V_t(\s)\ot V_n(\l)\\
\cong&(\oplus_{i=0}^{r}\oplus_{j=0}^{m}V_{(r+r'+1-2i)s}(\chi^{j+is}\l\s))
\oplus(\oplus_{i=0}^{r}\oplus_{j=m+1}^{l'-1}V_{n+t-1-2is-2j}(\chi^{j+is}\l\s))\\
&\oplus(\oplus_{i=0}^{r}\oplus_{l'\<j\<l-1}V_{(r+r'-2i)s}(\chi^{j+is}\l\s))
\oplus(\oplus_{0\<i\<r-1}\oplus_{j=l}^{s-1}V_{n+t-1-2is-2j}(\chi^{j+is}\l\s)).\\
\end{array}$$
\end{proposition}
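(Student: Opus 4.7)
The plan is to establish \prref{3.8} by induction on $r'$ (equivalently, on $n$) with $t = rs + l$ held fixed, using \leref{3.5}, \leref{3.6}, and \leref{3.7} as the building blocks. The essential observation is that \leref{3.5} provides a recursion in the Green ring that increments the first tensor factor by $s$, turning the desired identity into a deterministic algebraic manipulation on classes of indecomposables.

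The inductive engine is the following rearrangement of \leref{3.5}: in each of its three cases, one may solve for $[V_{n+s}(\mu)]$ to obtain
$$[V_{n+s}(\mu)] \;=\; [V_{s+1}(\lambda)][V_n(\lambda')] \;-\; \sum_\bullet [V_\bullet(\bullet)],$$
where every term $[V_\bullet(\bullet)]$ on the right is a class with first parameter at most $n$. Multiplying through by $[V_t(\sigma)]$ and applying associativity gives
$$[V_{n+s}(\mu)][V_t(\sigma)] \;=\; [V_{s+1}(\lambda)]\bigl([V_n(\lambda')][V_t(\sigma)]\bigr) \;-\; \sum_\bullet [V_\bullet(\bullet)][V_t(\sigma)].$$
The bracketed factor is known by the inductive hypothesis; each subtracted product involves a first factor with strictly smaller $r'$-parameter and is therefore also known; and each product $[V_{s+1}(\lambda)][V_\ast(\ast)]$ that arises after distributing across the inductive decomposition is evaluated directly by \leref{3.5}. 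Simplifying then yields the decomposition of $V_{n+s}(\mu) \otimes V_t(\sigma)$.

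The base cases cover $r' = 0$, so $n = l'$ with $1 \leq l' \leq s-1$. When additionally $r = 0$, a direct calculation on weight vectors (or a secondary induction on $l$ using the same recursion applied to the second tensor factor, which is legitimate because the isomorphism $V_n \otimes V_t \cong V_t \otimes V_n$ is asserted in each preceding lemma) supplies the decomposition. When $s \mid t$ one invokes \leref{3.6}, and when $t = rs + 1$ one invokes \leref{3.7}; any remaining small configuration reduces to one of these after at most one application of the recursion.

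The principal difficulty is bookkeeping: the right-hand side of the statement splits into four elaborate cases according to whether $l + l' \leq s$ or $l + l' \geq s+1$ and whether $l \leq l'$ or $l \geq l'$, with summand indices $V_{(r+r'-2i)s + \cdot}(\chi^{\cdot + is}\lambda\sigma)$ that depend delicately on these comparisons. Verifying that the raw output of the recursion reorganises into precisely the claimed closed form, without off-by-one errors at the boundary values $i = r$, $j = l$, and $j = l'$ where summands near the top and bottom of the Loewy-length spectrum meet and where the $(r+r'-2i)s$ and $n+t-1-2is-2j$ patterns interchange, is where the bulk of the labour lies.
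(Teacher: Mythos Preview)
Your approach is essentially the one the paper intends: the paper's proof consists of the single line ``It is similar to \cite[Theorem~3.15]{SunChen}'', and that theorem is built on the analogues of \leref{3.5}, \leref{3.6}, and \leref{3.7} (namely \cite[Lemmas~3.12--3.14]{SunChen}) exactly as you propose, with \leref{3.5} supplying the recursion and \leref{3.6}, \leref{3.7} handling the anchor cases. One organizational point to tidy: with $n\geqslant t$ fixed, the base case $r'=0$ forces $r=0$ as well, so your separate clauses ``when $s\mid t$'' and ``when $t=rs+1$'' are not subcases of $r'=0$ but rather independent starting points (\leref{3.6} handles $l=0$ for arbitrary $r,r'$; \leref{3.7} handles $l=1$, $l'\neq 0$ for arbitrary $r,r'$); once that is straightened out the induction goes through as you describe.
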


\begin{proof}
It is similar to \cite[Theorem 3.15]{SunChen}.
\end{proof}

\subsection{Tensor products $V_n(\l)\ot V_t(\s, \b)$ and $V_t(\s, \b)\ot V_n(\l)$}\selabel{3.2}
In this subsection, we consider the tensor products of a module of nilpotent type with one of non-nilpotent type.
Throughout this and the next subsections, assume that $1<s=|\chi(a)|<|\chi|=\ol{s}<\infty$.
Let $q=\chi^{-1}(a)$ and $\ol{s}=ss'$. The $q$ is a primitive $s^{th}$ root of unity and $s'>1$.

\begin{lemma}\lelabel{3.10}
Let $\s,\l \in\hat{G}$, $\b\in k^{\times}$, $0\<r\<s$ and $t\>1$. Then
$$\begin{array}{rl}
V_{s+r}(\l)\ot V_{t}(\s,\b)&\cong(s-r)V_{t}(\s\l,\b)\oplus r V_{t-1}(\s\l,\b)\oplus r V_{t+1}(\s\l,\b),\\
V_{t}(\s,\b)\ot V_{s+r}(\l)&\cong(s-r)V_{t}(\s\l,{\l(a)}^{\ol s}\b)
\oplus r V_{t-1}(\s\l,{\l(a)}^{\ol s}\b)\oplus r V_{t+1}(\s\l,{\l(a)}^{\ol s}\b).\\
\end{array}$$
\end{lemma}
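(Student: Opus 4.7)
The plan is to construct the direct-sum decomposition explicitly from the Hopf-Ore coproduct formula $\D(x^n)=\sum_{i=0}^n\binom{n}{i}_q x^i\ot a^i x^{n-i}$ (which follows from the $q$-commutation $(1\ot x)(x\ot a)=q(x\ot a)(1\ot x)$ and the $q$-binomial theorem) combined with the $q$-Lucas identity, which forces $\binom{\ol s}{i}_q=0$ unless $s\mid i$. Since $s+r\<2s$, one has $x^{2s}=0$ on $V_{s+r}(\l)$, so only the $i=0$ and $i=s$ summands survive and
$$\D(x^{\ol s})(m_i\ot v)=m_i\ot x^{\ol s}v+s'\mu^s\,m_{i+s}\ot x^{(s'-1)s}v,$$
with $\mu=\s(a)$ and $m_{i+s}=0$ for $i\>r$.

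Writing $\D(x^{\ol s})=A+B$ with $A=1\ot x^{\ol s}$, one checks $AB=BA$, $B^2=0$ (from $x^{2s}=0$ on $V_{s+r}(\l)$) and $(A-\b)^t=0$ (from $(x^{\ol s}-\b)^t=0$ on $V_t(\s,\b)$), so $(\D(x^{\ol s})-\b)^{t+1}=(A-\b)^{t+1}+(t+1)(A-\b)^tB=0$; hence every Jordan block of $x^{\ol s}$ on $V_{s+r}(\l)\ot V_t(\s,\b)$ has size at most $t+1$ at eigenvalue $\b$. A rank computation from the $(A,B)$-formula gives $\mathrm{rank}(x^{\ol s}-\b)^t=r\ol s$, so there are exactly $r$ size-$(t+1)$ Jordan blocks per weight space; a parallel computation of $\mathrm{rank}(x^{\ol s}-\b)^{t-1}$ together with a dimension check then forces $s-r$ blocks of size $t$ and $r$ blocks of size $t-1$ per weight space. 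To upgrade this Jordan data to an actual $H$-module direct-sum decomposition, I would construct cyclic generators: for each size-$(t+1)$ block pick $m_i\ot v$ with $i<r$ and $v$ a generator of $V_t(\s,\b)$ so that the correction term $B(m_i\ot v)$ lifts the Jordan length from $t$ to $t+1$; the generators for the $V_t(\s\l,\b)$- and $V_{t-1}(\s\l,\b)$-summands then live in suitable $(x^{\ol s}-\b)^k$-kernels of the complement. By \prref{2.2}, the resulting cyclic $H$-submodules are forced to be isomorphic to $V_{t-1}(\s\l,\b)$, $V_t(\s\l,\b)$ or $V_{t+1}(\s\l,\b)$, and a dimension count confirms that the sum is direct.

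For $V_t(\s,\b)\ot V_{s+r}(\l)$ the same strategy applies with the roles of the two factors swapped, the only change being that the $a^{ps}$-factor in the coproduct now acts on $V_{s+r}(\l)$. Since $\chi^{\ol s}=1$ we have $a^{\ol s}m_j=\l(a)^{\ol s}m_j$, which shifts the eigenvalue of $x^{\ol s}$ from $\b$ to $\l(a)^{\ol s}\b$ and explains the twist in the second isomorphism. The main obstacle is precisely the passage from Jordan-form data for the central element $x^{\ol s}$ to a genuine $H$-module decomposition: because $\b$ (or $\l(a)^{\ol s}\b$) is the only eigenvalue on the whole module, centrality of $x^{\ol s}$ alone produces no splitting, so the explicit $q$-binomial generators are essential; the most delicate step is checking that the candidate generator of a $V_{t+1}(\s\l,\b)$-summand really generates a module on which $(x^{\ol s}-\b)^t$ is nonzero, via the correction contribution from $B$ that is invisible on $V_t(\s,\b)$ alone.
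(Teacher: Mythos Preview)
Your approach is essentially the paper's: both compute the Jordan structure of the central element $x^{\ol s}$ on the tensor product (you via the $q$-Lucas identity and the $A+B$ decomposition with $AB=BA$, $B^2=0$; the paper via an explicit matrix computation on the single weight space $M_{(\chi^{\ol s-1}\s\l)}$, arriving at the block form $\left(\begin{smallmatrix}A & 0 \\ \a A & A\end{smallmatrix}\right)$ with $\a=s'\s(a)^s$, whose Jordan form is $J_{t-1}(\b)\oplus J_{t+1}(\b)$) and then upgrade this to an $H$-module decomposition by a dimension count. The step you correctly flag as delicate---passing from Jordan data for $x^{\ol s}$ on one weight space to cyclic $H$-submodules that sum directly---is exactly what \cite[Lemma~3.4]{SunChen} packages, and invoking it (as the paper does) lets you bypass the explicit generator construction entirely.
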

\begin{proof}
We only prove the first isomorphism since the second one can be shown similarly.
Let $M=V_{s+r}(\l)\ot V_{t}(\s,\b)$, and let $\varphi: M\ra M$ be the endomorphism of $M$
given by $\varphi(m)=x^{\ol s}m$, $m\in M$.
Let $\{m_i|0\<i\<s+r-1\}$ and $\{v_j|0\<j\<t\ol{s}-1\}$ be the standard bases of
 $V_{s+r}(\l)$ and $ V_{t}(\s,\b)$, respectively.
Then $\{m_i\ot v_j|0\<i\<s+r-1, 0\<j\<t{\ol s}-1\}$ is a basis of $M$.
Obviously, $\Pi(M)=\{\l\s, \chi\l\s, \cdots, \chi^{{\ol s}-1}\l\s\}$.
Moreover, $M_{(\chi^l\l\s)}={\rm span}\{m_i\ot v_j|0\<i\<s+r-1,\ 0\<j\<t{\ol s}-1,\ i+j\equiv l\ ({\rm mod}\ {\ol s})\}$
for all $0\<l\<{\ol s}-1$. In particular,
$$M_{(\chi^{{\ol s}-1}\l\s)}={\rm span}\{m_i\ot v_{j{\ol s}-i-1}| 1\<j\<t, 0\<i\<s+r-1\}.$$
Let $V_i={\rm span}\{m_{i}\ot v_{j{\ol s}-i-1}, m_{i+s}\ot v_{j{\ol s}-s-i-1}|1\<j\<t\}$ for $0\< i\< r-1$ (if $r>0$) and
$V_i={\rm span}\{m_{i}\ot v_{j{\ol s}-i-1}|1\<j\<t\}$ for $r\<i\<s-1$ (if $r<s$).
Then $M_{(\chi^{{\ol s}-1}\l\s)}=V_0\oplus V_1\oplus\cdots\oplus V_{s-1}$ as vector spaces and
$\varphi(V_i)\subseteq V_i$. For $0\< i\< r-1$ (if $r>0$),
a straightforward computation shows that under the basis
$m_i\ot v_{{\ol s}-i-1}, m_i\ot v_{2{\ol s}-i-1}, \cdots, m_i\ot v_{t{\ol s}-i-1}, m_{i+s}\ot v_{{\ol s}-s-i-1},
m_{i+s}\ot v_{2{\ol s}-s-i-1}, \cdots,
m_{i+s}\ot v_{t{\ol s}-s-i-1}
$ of $V_i$,  the matrix of the restriction $\varphi|_{V_i}$ is
$C=\left(
    \begin{array}{cc}
      A & 0 \\
      \a A & A \\
    \end{array}
  \right)
$,
where
$$A=\left(\begin{array}{ccccc}
0 &  0 & \cdots & 0 & \a_0 \\
1 & 0 & \cdots & 0 & \a_1\\
\vdots & \ddots & \ddots & \vdots & \vdots  \\
0 &  0 & \ddots & 0 & \a_{t-2}\\
0 & 0 & \cdots & 1 & \a_{t-1} \\
\end{array}\right),$$\\
$\a=s'{\s(a)}^{s}$ and $\a_j=(-1)^{t+1-j}\binom{t}{j}\b^{t-j}$ for $0\<j\<t-1$.
Similarly, under the basis $m_i\ot v_{{\ol s}-i-1}, m_i\ot v_{2{\ol s}-i-1},
\cdots, m_i\ot v_{t{\ol s}-i-1}$ of $V_i$, the matrix of the
restriction $\varphi|_{V_i}$ is  the matrix $A$ for $r\<i\<s-1$ (if $r<s$).
Hence the matrix of the restriction $\varphi|_{M_{(\chi^{{\ol s}-1}\s\l)}}$ (under some suitable basis) is
$$D=\left(\begin{array}{cccccc}
C&&&&&\\
&\ddots&&&&\\
&&C&&&\\
&&&A&&\\
&&&&\ddots&\\
&&&&&A\\
\end{array}\right).$$\\
with $r$ copies of $C$ and $s-r$ copies of $A$.
One can check that ${\rm det}(D)\neq0$.
A tedious but standard computation shows that the Jordan form of $C$
is $\left(\begin{array}{cc}
J_{t-1}(\b)&0\\
0&J_{t+1}(\b)\\
\end{array}\right)$, and the Jordan form of $A$ is $J_{t}(\b)$.
Thus it follows from \cite[Lemma 3.4]{SunChen} that
$V_{s+r}(\l)\ot V_{t}(\s,\b)$ contains a submodule isomorphic to
$(s-r)V_{t}(\chi^{{\ol s}-1}\s\l,\b)\oplus rV_{t-1}(\chi^{{\ol s}-1}\s\l,\b)\oplus rV_{t+1}(\chi^{{\ol s}-1}\s\l,\b)$
and so $V_{s+r}(\l)\ot V_{t}(\s,\b)\cong(s-r)V_{t}(\s\l,\b)\oplus rV_{t-1}(\s\l,\b)\oplus rV_{t+1}(\s\l,\b)$
since the modules on the both sides have the same dimension $(s+r)t {\ol s}$.
\end{proof}

\begin{corollary}\colabel{3.11}
Let $\s,\l \in\hat{G}$, $\b\in k^{\times}$,  $t\>1$. Then
$$\begin{array}{rl}
V_{2s}(\l)\ot V_{t}(\s,\b)&\cong sV_{t-1}(\s\l,\b)\oplus sV_{t+1}(\s\l,\b),\\
V_{t}(\s,\b)\ot V_{2s}(\l)&\cong sV_{t-1}(\s\l,{\l(a)}^{\ol s}\b)\oplus sV_{t+1}(\s\l,{\l(a)}^{\ol s}\b).\\
\end{array}$$
In particular, we have
$$V_{2s}(\e)\ot V_{t}(\s,\b)\cong V_{t}(\s,\b)\ot V_{2s}(\e)\cong sV_{t-1}(\s,\b)\oplus sV_{t+1}(\s,\b).$$
\end{corollary}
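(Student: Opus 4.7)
The plan is to deduce both isomorphisms directly from \leref{3.10} by specialization. For the first identity, I would set $r=s$ in the first formula of \leref{3.10}, so that $s+r=2s$; the coefficient $s-r$ of $V_t(\s\l,\b)$ then vanishes, leaving precisely $sV_{t-1}(\s\l,\b)\op sV_{t+1}(\s\l,\b)$. The second identity follows in exactly the same way from the second formula of \leref{3.10}.

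For the final ``in particular'' assertion, I would further specialize to $\l=\e$. Since $\e$ is the trivial character, $\s\e=\s$ and $\e(a)^{\ol s}=1^{\ol s}=1$, so the twist by $\l(a)^{\ol s}$ on the scalar $\b$ collapses to the identity. Consequently the two tensor products coincide and both decompose as $sV_{t-1}(\s,\b)\op sV_{t+1}(\s,\b)$, as claimed.

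There is no substantial obstacle, since all the genuine work has already been carried out in the proof of \leref{3.10}, where the endomorphism induced by $x^{\ol s}$ on the top weight space $M_{(\chi^{\ol s-1}\l\s)}$ is block-diagonalized and its Jordan form is computed. The only subtlety worth noting is to verify that the argument of \leref{3.10} does not degenerate when $r=s$: in that boundary case the block-diagonal matrix $D$ consists of $s$ copies of the $2t\times 2t$ block $C$ and no copies of $A$, remains nonsingular, and has Jordan form $s(J_{t-1}(\b)\op J_{t+1}(\b))$, which via \cite[Lemma 3.4]{SunChen} yields exactly the asserted decomposition. Hence the corollary is formally a one-line specialization, and the write-up can simply invoke \leref{3.10} with $r=s$ and, for the last isomorphism, set $\l=\e$.
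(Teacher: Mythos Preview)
Your proposal is correct and matches the paper's approach: the corollary is stated immediately after \leref{3.10} without a separate proof, being understood as the specialization $r=s$ (and then $\l=\e$ for the final assertion). Your remark that the proof of \leref{3.10} does not degenerate at $r=s$ is well taken but already accounted for there, since the lemma is stated for $0\<r\<s$ and the proof explicitly handles the boundary via the parenthetical ``(if $r<s$)''.
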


\begin{proposition}\prlabel{3.12}
Let $n, t\in\mathbb{Z}$ with $n, t\>1$, $\l, \s\in\hat{G}$ and $\b\in k^{\times}$.
Let $n=us+r$ with $u\>0$ and $0\<r<s$. Then
$$\begin{array}{rl}
V_n(\l)\ot V_t(\s,\b)
\cong&(\oplus_{1\<i\<{\rm min}\{t,u\}}(s-r)V_{2i-1+|t-u|}(\s\l,\b))\\
&\oplus(\oplus_{i=1}^{{\rm min}\{t,u+1\}}rV_{2i-1+|t-u-1|}(\s\l,\b)),\\
V_t(\s,\b)\ot V_n(\l)
\cong&(\oplus_{1\<i\<{\rm min}\{t, u\}}(s-r)V_{2i-1+|t-u|}(\s\l,\l(a)^{\ol s}\b))\\
&\oplus(\oplus_{i=1}^{{\rm min}\{t, u+1\}}rV_{2i-1+|t-u-1|}(\s\l,\l(a)^{\ol s}\b)).\\
\end{array}$$
\end{proposition}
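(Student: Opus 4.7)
I would prove the first isomorphism by induction on $n$; the second follows from the same argument with the Jordan eigenvalue $\b$ replaced by $\l(a)^{\ol{s}}\b$, as in the two halves of \leref{3.10}. Write $n=us+r$ with $0\<r<s$.

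\emph{Base cases.} For $s\<n\<2s$ the statement is \leref{3.10} (and $n=2s$ is \coref{3.11}). For $n<s$ (so $u=0$), the formula collapses to $V_n(\l)\ot V_t(\s,\b)\cong nV_t(\s\l,\b)$. I would verify this by the matrix computation of \leref{3.10} with the simplification that no shift blocks $C$ appear: since $n<s+1$, the top weight space $M_{(\chi^{{\ol s}-1}\s\l)}$ splits as $V_0\op\cds\op V_{n-1}$ with $V_i=\mathrm{span}\{m_i\ot v_{j{\ol s}-i-1}\mid 1\<j\<t\}$, and $\vf=x^{\ol s}$ restricts on each $V_i$ to the matrix $A$ of \leref{3.10} (whose Jordan form is $J_t(\b)$). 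Then \cite[Lemma 3.4]{SunChen} yields $n$ summands of type $V_t(\s\l,\b)$, and a dimension count $nt{\ol s}$ closes the argument.

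\emph{Inductive step.} Assume the formula for all first indices strictly less than $n$, and let $n>2s$. The idea is to isolate $[V_n(\l)]$ in the Green ring $r(\mcw)$ through a three-term identity on the nilpotent side. Applying \leref{3.5} (2)(b) when $r\>1$ (respectively \leref{3.5} (1) when $r=0$) with $\s=\e$ and $t=(u-1)s+r$ yields an identity in $r(\mcw)$ of the form
\[
[V_{s+1}(\l)]\cdot[V_{(u-1)s+r}(\e)]\;=\;[V_n(\l)]\;+\;[C],
\]
where $C$ is a direct sum of indecomposables $V_{n'}(\chi^i\l)$ all of whose first indices satisfy $n'<n$. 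Multiplying both sides by $[V_t(\s,\b)]$ and using associativity gives
\[
[V_n(\l)\ot V_t(\s,\b)]\;=\;[V_{s+1}(\l)]\cdot[V_{(u-1)s+r}(\e)\ot V_t(\s,\b)]\;-\;[C\ot V_t(\s,\b)].
\]
The inductive hypothesis computes $V_{(u-1)s+r}(\e)\ot V_t(\s,\b)$ as a direct sum of non-nilpotent indecomposables $V_k(\s,\b)$, after which tensoring each with $V_{s+1}(\l)$ is handled by \leref{3.10}. Similarly every summand of $C\ot V_t(\s,\b)$ is known by the inductive hypothesis. The subtraction, together with Krull-Schmidt, yields the claimed module-theoretic decomposition.

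\emph{Main obstacle.} The technical heart of the proof is the combinatorial verification that this subtraction coincides with the stated truncated Clebsch-Gordan sum
\[
\op_{i=1}^{{\rm min}\{t,u\}}(s-r)V_{2i-1+|t-u|}(\s\l,\b)\;\op\;\op_{i=1}^{{\rm min}\{t,u+1\}}rV_{2i-1+|t-u-1|}(\s\l,\b).
\]
Conceptually, the recursion transports the $\mathfrak{sl}_2$-type Clebsch-Gordan rule on the nilpotent side (\leref{3.5}) into the corresponding rule on the non-nilpotent side (\leref{3.10}), so what remains is an $\mathfrak{sl}_2$-character identity between truncated symmetric products. Verifying it requires a case split on the signs of $t-u$ and $t-u-1$ and careful tracking of how the multiplicity weights $s-r$ and $r$ interact across the two Clebsch-Gordan tables; the bookkeeping is tedious but essentially mechanical.
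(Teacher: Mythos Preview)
Your approach is correct and rests on the same ingredients as the paper (the matrix computation for $n<s$, \leref{3.5}, \leref{3.10}, and Krull--Schmidt), but the induction is organized differently. The paper first disposes of $n\<s$ by the direct matrix argument (as in \cite[Theorem~3.6, Case~1]{SunChen}) and then, for $n>s$, runs an induction on $t$ rather than on $n$: the base case $t=1$ is itself proved by induction on $u$ via tensoring with $V_{s+1}(\e)$ on the left (exactly the three-term mechanism you describe), and the step $t\Rightarrow t+1$ is handled by tensoring on the right with $V_{2s}(\e)$ and invoking \coref{3.11}. Your single induction on $n$ is a legitimate alternative; its cost is that the Clebsch--Gordan bookkeeping must be carried out for arbitrary $t$ in one shot, whereas the paper's scheme keeps the $t=1$ combinatorics trivially small (the right-hand side is a single summand) and then promotes to general $t$ through the clean recursion $sV_{t-1}\oplus sV_{t+1}$.
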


\begin{proof}
Similarly to \cite[Theorem 3.6]{SunChen}, it can be shown  for $n\<s$ and $n>s$, respectively.
For $n\<s$, the proof is similar to Case 1 in the proof of \cite[Theorem 3.6]{SunChen}.
For $n>s$, the proposition can be shown by induction on $t$.
We only consider the case of $t=1$ since the arguments are similar to Case 2 in the proof of \cite[Theorem 3.6]{SunChen}
for $t=2$ and the induction step.

Let $n\>s$. We prove the decomposition of $V_n(\l)\ot V_1(\s,\b)$
by induction on $u$ for $r=0$ and $r>0$, respectively.
Note that $u\>1$ by $n\>s$. First assume that $r=0$. Then
for $u=1, 2$, it follows from \leref{3.10}. Now let $u\>2$.
Then by the induction hypothesis, \leref{3.5} and \leref{3.10}, we have
$$\begin{array}{rl}
&V_{s+1}(\e)\ot V_{us}(\l) \ot V_1(\s,\b)\\
\cong&V_{s+1}(\e)\ot sV_{u}(\s\l,\b)\\
\cong& s(s-1)V_{u}(\s\l,\b)\oplus sV_{u-1}(\s\l,\b)\oplus sV_{u+1}(\s\l,\b)\\
\end{array}$$
and
$$\begin{array}{rl}
&V_{s+1}(\e)\ot V_{us}(\l) \ot V_1(\s,\b)\\
\cong&(V_{(u+1)s}(\l)\oplus (\oplus_{i=1}^{s-1}V_{us}(\chi^{i}\l))\oplus V_{(u-1)s}(\chi^{s}\l))\ot V_1(\s,\b)\\
\cong& V_{(u+1)s}(\l)\ot V_1(\s,\b)\oplus (\oplus_{i=1}^{s-1}sV_{u}(\chi^{i}\s\l,\b))\oplus sV_{u-1}(\chi^{s}\s\l,\b).\\
\end{array}$$
Thus, it follows from Krull-Schmidt Theorem that $V_{(u+1)s}(\l)\ot V_1(\s,\b)\cong sV_{u+1}(\s\l,\b)$.
Next assume that $1\< r \<s-1$. Then for $u=1$, it follow from  \leref{3.10}.
Now let $u\>1$. Then by the induction hypothesis, \leref{3.5}, \leref{3.10},
the result above for $r=0$ and the result for $n\<s$, we have
$$\begin{array}{rl}
&V_{s+1}(\e)\ot V_{us+r}(\l) \ot V_1(\s,\b)\\
\cong&V_{s+1}(\e)\ot((s-r)V_{u}(\s\l,\b)\oplus rV_{u+1}(\s\l,\b))\\
\cong& (s-r)(s-1)V_{u}(\s\l,\b)\oplus (s-r)V_{u-1}(\s\l,\b)\oplus (s-r)V_{u+1}(\s\l,\b)\\
&\oplus r(s-1)V_{u+1}(\s\l,\b)\oplus rV_u(\s\l,\b) \oplus rV_{u+2}(\s\l,\b)
\end{array}$$
and
$$\begin{array}{rl}
&V_{s+1}(\e)\ot V_{us+r}(\l) \ot V_1(\s,\b)\\
\cong&(V_{(u+1)s+r}(\l)\oplus(\oplus_{1\<i\<r-1}V_{(u+1)s}(\chi^{i}\l))\oplus V_{(u+1)s-r}(\chi^r\l)
\oplus(\oplus_{r+1\<i\<s-1}V_{us}(\chi^{i}\l)) \\
&\oplus V_{(u-1)s+r}(\chi^{s}\l))\ot V_1(\s,\b)\\
\cong& V_{(u+1)s+r}(\l)\ot V_1(\s,\b) \oplus (\oplus_{1\<i\<r-1}sV_{u+1}(\chi^i\s\l,\b))
\oplus (s-r)V_{u+1}(\chi^r\s\l,\b)\\
&\oplus rV_u(\chi^r\s\l,\b)\oplus (\oplus_{r+1\<i\<s-1}sV_{u}(\chi^i\s\l,\b))\oplus (s-r)V_{u-1}(\chi^{s}\s\l,\b)
\oplus rV_u(\chi^{s}\s\l,\b)\\
\cong& V_{(u+1)s+r}(\l)\ot V_1(\s,\b) \oplus (r-1)sV_{u+1}(\s\l,\b)\oplus rV_u(\s\l,\b)\oplus (s-r)V_{u+1}(\s\l,\b)\\
&\oplus (s-1-r)sV_{u}(\s\l,\b)\oplus (s-r)V_{u-1}(\s\l,\b)\oplus rV_u(\s\l,\b).\\
\end{array}$$\\
Thus, it follows from Krull-Schmidt Theorem that
$$V_{(u+1)s+r}(\l)\ot V_1(\s,\b)\cong (s-r)V_{(u+1)}(\s\l,\b)\oplus rV_{u+2}(\s\l,\b).$$
The decomposition of $V_1(\s,\b)\ot V_n(\l)$ can be shown similarly.
\end{proof}

\subsection{Tensor products $V_n(\s, \a)\ot V_t(\l, \b)$}\selabel{3.2}
In this subsection, we consider the tensor products of two module of non-nilpotent type.
Throughout the following, let $\xi\in k$ be a  root of unity of order $s'$.

\begin{notation}\nolabel{3.0}
For  $\a, \b\in k^{\times}$, let $\theta, \eta\in k$ be two scales satisfying $\theta^{s'}=\a$
and $\eta^{s'}=\b$, respectively. Then for $\l\in\hat{G}$ and $1\<i, j\<s'$, let
$\a_{ij}=\theta\l^{s}(a)\xi^{i-1}+ \eta\xi^{j-1}$. Then $\a_{ij}^{s'}=\a_{mn}^{s'}$ if $j-i\equiv n-m$ (mod $s'$).
Denote $\a_{1j}^{s'}$ by $\a_j$ for all $1\<j\<s'$.
\end{notation}

\begin{lemma}\lelabel{3.13}
Let $\l , \s \in\hat{G}$ and $\a, \b\in k^{\times}$. With the notations given in \noref{3.0}, we have\\
{\rm (1)} if $\b+(-1)^{s'+1}\a{\l(a)}^{\ol s}\neq 0$, then $V(\s,\a)\ot V(\l,\b)\cong\oplus_{j=1}^{s'}sV(\s\l,\a_j)$;\\
{\rm (2)} if $\b+(-1)^{s'+1}\a{\l(a)}^{\ol s}=0$, then there is unique $j_0$ with $1\<j_0\<s'$ such that $\a_{j_0}=0$, and
$V(\s,\a)\ot V(\l,\b)\cong (\oplus_{1\<j\<s', j\neq j_0}sV(\s\l,\a_j))\oplus (\oplus_{j=0}^{{\ol s}-1}V_{s}(\chi^j\s\l))$.
\end{lemma}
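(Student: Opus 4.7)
The plan is to follow the strategy of \leref{3.10}, by analyzing the action of $x^{\ol s}$ on $M:=V(\s,\a)\ot V(\l,\b)$ and reading off the decomposition via \prref{2.2}. Since $q$ is a primitive $s$-th root of unity, $\binom{s}{i}_q=0$ for $0<i<s$, which yields the Frobenius-type identity $\D(x^s)=x^s\ot a^s+1\ot x^s$. The two summands commute in $H\ot H$ (because $q^s=1$ forces $a^s$ and $x^s$ to commute), so iteration gives
$$\D(x^{\ol s})=(x^s\ot a^s+1\ot x^s)^{s'}.$$
On $M$, because $a^s$ acts by the scalar $\l^s(a)$ on every weight vector of $V(\l,\b)$ (again using $q^s=1$), the element $x^{\ol s}$ acts as $(\l^s(a)(X\ot 1)+1\ot Y)^{s'}$, where $X:=x^s|_{V(\s,\a)}$ and $Y:=x^s|_{V(\l,\b)}$ satisfy $X^{s'}=\a\cdot\mathrm{id}$ and $Y^{s'}=\b\cdot\mathrm{id}$.

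Next I would diagonalize. Since $\a,\b\in k^{\times}$, the polynomials $T^{s'}-\a$ and $T^{s'}-\b$ are separable, so $X$ and $Y$ are diagonalizable with eigenvalues $\theta\xi^{i-1}$ and $\eta\xi^{j-1}$ respectively, each of multiplicity $s$. The commuting diagonalizable operators $\l^s(a)(X\ot 1)$ and $1\ot Y$ admit a simultaneous eigenbasis on $M$, so $x^{\ol s}=(\l^s(a)(X\ot 1)+1\ot Y)^{s'}$ is diagonalizable on $M$ with eigenvalues $\a_{ij}^{s'}$. Since $\a_{ij}^{s'}$ depends only on $j-i\ ({\rm mod}\ s')$, the distinct eigenvalues are $\a_1,\ldots,\a_{s'}$ (with possible repetitions), each of total multiplicity $s^2s'$. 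Whenever $\a_k\neq 0$, the operator $x$ is injective on the $\a_k$-eigenspace (otherwise $xw=0$ would force $x^{\ol s}w=0$), so that eigenspace is a weight submodule in which $x$ cycles through weight spaces and all weight spaces have dimension $s$; by \prref{2.2} it is isomorphic to $s\,V(\s\l,\a_k)$.

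A straightforward check with $s'$-th roots shows that $\b+(-1)^{s'+1}\a\l(a)^{\ol s}\neq 0$ is equivalent to all $\a_j\neq 0$, in which case the preceding paragraph establishes part (1). Under the opposite hypothesis, exactly one $j_0$ has $\a_{j_0}=0$; the $(s'-1)$ nonzero eigenspaces supply the non-nilpotent summands asserted in (2), while on the remaining eigenspace $E$ (of dimension $s^2s'$) the operator $\l^s(a)(X\ot 1)+(1\ot Y)$ itself vanishes, so $x^s=0$ on $E$, and each indecomposable summand of $E$ is of nilpotent type $V_n(\mu)$ with $n\<s$. The main obstacle is identifying $E$ as $\op_{j=0}^{\ol s-1}V_s(\chi^j\s\l)$ precisely; this amounts to showing $\dim\ker(x|_E)=\ol s$, equivalently that on each of the $\ol s$ weight spaces $E_l$ the kernel of $x$ is one-dimensional. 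I would establish this by choosing a convenient basis of $E$ (for instance using simultaneous eigenvectors of $X\ot 1$ and $1\ot Y$ restricted to $E$) and computing the matrix of $x$ explicitly on a single weight space, paralleling the Jordan-form computation in \leref{3.10}.
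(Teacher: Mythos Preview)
Your approach is correct and genuinely cleaner than the paper's. The key difference is that you exploit the $q$-Frobenius identity $\Delta(x^s)=x^s\ot a^s+1\ot x^s$ to write $x^s|_M=\l^s(a)(X\ot 1)+1\ot Y$ with $X^{s'}=\a$, $Y^{s'}=\b$; diagonalizing $X$ and $Y$ separately then yields the diagonalizability of $x^{\ol s}$ on $M$ and its eigenvalues $\a_j$ for free. The paper instead computes, for each weight space $M_{(\chi^{ls-1}\s\l)}$, an explicit block matrix $C_l$ for $\varphi^{\ol s}$, builds an auxiliary $s'{}^2$-dimensional invariant subspace on which $\varphi^s$ has a concrete block matrix $F$, finds the elementary divisors of $F$ to prove it is diagonalizable, and then pulls this back to each $C_l$. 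Your route bypasses all of that linear algebra. (To pass from ``eigenspace of $x^{\ol s}$ with nonzero eigenvalue $\a_k$'' to ``$s\,V(\s\l,\a_k)$'' you should say explicitly that $x^{\ol s}$ restricted to any $V_t(\mu,\gamma)$ has Jordan type $J_t(\gamma)$ on each weight space, so diagonalizability forces $t=1$.)

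For the nilpotent part in (2), one simplification you may have missed: since $\ker(x|_M)\subseteq\ker(x^{\ol s}|_M)=E$, you have $\ker(x|_E)=\ker(x|_M)$, so it suffices to compute $\ker(x)$ on each weight space of the \emph{whole} module $M$ in the standard basis $m_i\ot v_j$ --- no need to first pass to the eigenbasis for $X\ot 1$ and $1\ot Y$ (which, as you may notice, does not consist of weight vectors). This is exactly what the paper does: writing $\varphi(z)=0$ on $M_{(\chi^i\s\l)}$ gives an $\ol s\times\ol s$ linear system whose coefficient matrix has determinant $\b+(-1)^{s'+1}\a\l(a)^{\ol s}$ and rank $\ge\ol s-1$, so the kernel is one-dimensional per weight space. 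Combined with $\dim E=s\ol s$ and $x^s|_E=0$, this forces exactly $\ol s$ indecomposable summands, each of dimension $s$, with highest weights running once through $\{\chi^j\s\l\}_{0\le j<\ol s}$. (The paper alternatively exhibits explicit generators $z_i$ of the summands $V_s(\chi^i\s\l)$, but your counting argument is equally valid.)
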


\begin{proof}
Let $\{m_i|0\<i\<{\ol s}-1\}$ and $\{v_j|0\<j\<{\ol s}-1\}$
be the bases of $V(\s,\a)$ and $V(\l,\b)$ as stated in the last section, respectively.
Let $M=V(\s,\a)\ot V(\l,\b)$.
Then $\{m_i\ot v_j|0\<i, j\<{\ol s}-1\}$
is a basis of $M$. Moreover, $\Pi(M)=\{\chi^l\l\s|0\<l\<{\ol s}-1\}$
and for $0\<l\<{\ol s}-1$,
$$M_{(\chi^l\l\s)}={\rm span}\{m_i\ot v_j|0\<i, j\<{\ol s}-1, i+j\equiv l\ ({\rm mod}\ {\ol s})\}.$$
Let $\varphi: M\ra M$ be the linear endomorphisms of $M$ defined by $\varphi(m)=xm$, $m\in M$.
Then $\varphi(M_{(\chi^i\s\l)})\subseteq M_{(\chi^{i+1}\s\l)}$ for $0\<i\<{\ol s}-1$ since $M_{(\chi^{\ol s}\s\l)}=M_{\s\l}$.
Hence ${\rm Ker}(\varphi)=\oplus_{i=0}^{{\ol s}-1}({\rm Ker}(\varphi)\cap M_{(\chi^i\s\l)})$.
For any $0\<i\<{\ol s}-2$, let $z=\sum_{j=0}^i\g_jm_j\ot v_{i-j}+\sum_{j=i+1}^{{\ol s}-1}\g_jm_j\ot v_{{\ol s}+i-j} \in M_{(\chi^i\s\l)}$
for some $\g_0, \g_1, \cdots, \g_{{\ol s}-1}\in k$. By a straightforward computation, one gets that
$$\begin{array}{rl}
\varphi(z)=&(\g_0+\a q^{{\ol s}-i-1}\l(a)\g_{{\ol s}-1})m_0\ot v_{i+1}+\sum_{1\<j\<i}(\g_j+q^{j-i-1}\l(a)\g_{j-1})m_j\ot v_{i+1-j}\\
&+(\l(a)\g_i+\b\g_{i+1})m_{i+1}\ot v_0+\sum_{i+2\<j\<{\ol s}-1}(\g_j+\g_{j-1}q^{j-i-1}\l(a))m_j\ot v_{{\ol s}+i+1-j}.\\
\end{array}$$
Thus, $\varphi(z)=0$ if and only if ($\g_0, \g_1, \cdots, \g_{{\ol s}-1}$) is a solution of following system of linear equations
$$\left\{\begin{array}{ccc}
    x_0+\a q^{{\ol s}-i-1}\l(a)x_{{\ol s}-1}&=&0 \\
    q^{-i}\l(a)x_0+x_1&=&0 \\
    q^{1-i}\l(a)x_1+x_2&=&0 \\
    \cdots\ \cdots\  \cdots&& \\
     q^{i-1-i}\l(a)x_{i-1}+x_i&=&0 \\
    \l(a)x_i+\b x_{i+1}&=&0\\
    q\l(a)x_{i+1}+x_{i+2}&=&0\\
    q^2\l(a)x_{i+2}+x_{i+3}&=&0\\
     \cdots\ \cdots\  \cdots&&\\
    q^{{\ol s}-i-2}\l(a)x_{{\ol s}-2}+x_{{\ol s}-1}&=&0\\
  \end{array}\right.$$
Let $D$ be the coefficient matrix of the system of linear equations, and ${\rm r}(D)$ the rank of $D$.
Then ${\rm r}(D)\>{\ol s}-1$ and det($D$)=$\b+(-1)^{s'+1}\a\l(a)^{\ol s}$. Hence
${\rm Ker}(\varphi)\cap M_{(\chi^i\s\l)}=0$ if $\b+(-1)^{s'+1}\a\l(a)^{\ol s}\neq 0$,
and ${\rm dim}({\rm Ker}(\varphi)\cap M_{(\chi^i\s\l)})=1$ if $\b+(-1)^{s'+1}\a\l(a)^{\ol s}=0$,
where $0\<i\<{\ol s}-2$. Similarly, one can check that
${\rm Ker}(\varphi)\cap M_{(\chi^{{\ol s}-1}\s\l)}=0$ if $\b+(-1)^{s'+1}\a\l(a)^{\ol s}\neq 0$,
and ${\rm dim}({\rm Ker}(\varphi)\cap M_{(\chi^{{\ol s}-1}\s\l)})=1$ if $\b+(-1)^{s'+1}\a\l(a)^{\ol s}=0$.

Let $1\<l\<s'$. For any  $0\<i\<s-1$, define a subspace $V_i^l$ of  $M_{(\chi^{ls-1}\s\l)}$ by
$$V_i^l={\rm span}\{m_{js+i}\ot v_{(l-j)s-1-i}, m_{ts+i}\ot v_{(s'+l-t)s-1-i}|0\<j\<l-1, l\<t\<s'-1\}.$$
Then $\varphi^{\ol s}(V^l_i)\subseteq V^l_i$ and $M_{(\chi^{ls-1}\s\l)}=V^l_0\oplus V^l_1\oplus V^l_2\oplus \cdots V^l_{s-1}$.
For any $0\<i\<s-1$, let $C_{l,i}$ be the matrix of $\varphi^{\ol s}|_{V^l_i}$
under the basis $\{m_i\ot v_{ls-1-i}, m_{s+i}\ot v_{(l-1)s-1-i}, \cdots, m_{(l-1)s}\ot v_{s-1-i}, m_{ls+i}\ot v_{s's-1-i},
m_{(l+1)s+i}\ot v_{(s'-1)s-1-i}, \cdots, m_{(s'-1)s+i}\ot v_{(l+1)s-1-i}$\} of $V^l_i$.
A straightforward computation shows that $C_{l,0}=C_{l,1}=\cdots=C_{l,s-1}$, denoted by $C_l$ simply.
Hence the matrix of the restriction $\varphi^{\ol s}|_{M_{(\chi^{ls-1}\s\l)}}$ (under some suitable basis ) is
 $$A_l=\left(
   \begin{array}{cccc}
     C_l & 0 & \cdots & 0 \\
     0 & C_l & \cdots & 0 \\
     \cdots & \cdots & \cdots & \cdots \\
     0 & 0 & \cdots & C_l \\
   \end{array}
\right).$$
We claim that $C_l$ is diagonalizable, and so is $A_l$, where  $1\<l\<s'$. In fact,
let $V={\rm span}\{m_{is}\ot v_{js-1}|0\<i\<s'-1, 1\<j\<s'\}$. Then one can check that $\varphi^{s}(V)\subseteq V$,
and that under the basis $\{m_0\ot v_{s-1}, m_0\ot v_{2s-1}, \cdots, m_0\ot v_{s's-1},
m_{s}\ot v_{s-1}, m_{s}\ot v_{2s-1}, \cdots, m_{s}\ot v_{s's-1}, \cdots,
m_{(s'-1)s}\ot v_{s-1}, m_{(s'-1)s}\ot v_{2s-1}, \cdots, m_{(s'-1)s}\ot v_{s's-1}\}$ of $V$,
the matrix of the restriction $\varphi^{s}|_V$ is
 $$F=\left(
     \begin{array}{ccccc}
       B & 0 & \cdots & 0&\a\l(a)^{s}I_{s'}\\
       \l(a)^{s}I_{s'} & B & \cdots&0  & 0 \\
       \cdots & \cdots & \cdots  & \cdots&\cdots \\
       0&0&\cdots&B&0\\
       0 & 0&\cdots & \l(a)^{s}I_{s'} & B \\
     \end{array}
   \right)
 $$
where $B=\left(
           \begin{array}{cc}
             0 & \b \\
             I_{s'-1} &0 \\
           \end{array}
         \right)
$, $I_n$ denotes the $n\times n$ identity  matrix over $k$. Consider the matrix $yI_{{s'}^2}-F\in M_{{s'}^2}(k[y])$,
where $k[y]$ is the polynomial algebra over $k$ in one variable $y$.
Since $B$ is similar to the diagonal matrix ${\rm diag}\{\eta, \eta\xi, \cdots, \eta\xi^{s'-1}\}$,
$yI_{{s'}^2}-F$ is equivalent to the diagonal matrix
$${\rm diag}\{1, \cdots, 1, (y-\eta)^{s'}-\a\l(a)^{\ol s}, (y-\eta\xi)^{s'}-\a\l(a)^{\ol s}, \cdots,
(y-\eta\xi^{s'-1})^{s'}-\a\l(a)^{\ol s}\}.$$
It follows that $y-\a_{ij}$ gives all elementary divisors of $F$, where $1\<i, j\<s'$,
and so $F$ is diagonalizable. Hence $F^{s'}$ is also diagonalizable,
and $\a_{ij}^{s'}$ ($1\<i, j\<s'$) are its all eigenvalues.
Note that $V=V_0^1\oplus V_0^2\oplus \cdots V_0^{s'}$,
and $\varphi^{\ol s}(V_0^l)\subseteq V_0^l$ for all $1\<l\<s'$.
Therefore, $C_l$ is diagonalizable, and so is $A_l$.

For $1\<l\<s'$, let $\b_{l1}, \b_{l2}, \cdots, \b_{ls'}$ be the eigenvalues of $C_l$.
Then $\b_{lt}$, $1\<l, t\<s'$ are all eigenvalues of $F^{s'}$.
Hence there is a permutation $\pi$ of the set $\{(i,j)|1\<i,j\<s'\}$ such that
$\b_{lt}=a_{\pi(lt)}^{s'}$ for all $1\<l, t\<s'$.

(1) Assume that $\b+(-1)^{s'+1}\a{\l(a)}^{\ol s}\neq 0$. Then $\varphi$ is bijective and ${\rm det}(A_l)\neq 0$
for any $1\<l\<s'$.
It follows from \cite[Lemma 3.4]{SunChen} that $M$ contains a submodule isomorphic to
$\oplus_{j=1}^{s'}sV(\chi^{{\ol s}-1}\s\l, \b_{lj})\cong \oplus_{j=1}^{s'}sV(\s\l, \b_{lj})$,
and so $M\cong\oplus_{j=1}^{s'}sV(\s\l, \b_{lj})$ since they have the same dimension.
Now let $2\<l\<s'$. Then we have $M\cong \oplus_{j=1}^{s'}sV(\s\l, \b_{lj})\cong \oplus_{j=1}^{s'}sV(\s\l, \b_{1j})$.
Thus, by Krull-Schmidt Theorem, one knows that there is a permutation $\pi_l$ of the set $\{1, 2, \cdots, s'\}$
such that $\b_{lj}=\b_{1\pi_l(j)}$ for all $1\<j\<s'$. Since $\a_{ij}^{s'}=\a_{lt}^{s'}$ for any $1\<i, j, l, t\<s'$ with $j-i\equiv l-t$ (mod $s'$),
there is a permutation $\pi_0$ of $\{1, 2, \cdots, s'\}$ such that $\b_{1j}=\a_{1\pi_0(j)}^{s'}=\a_{\pi_0(j)}$
for all $1\<j\<s'$. It follows that $M\cong \oplus_{j=1}^{s'}sV(\s\l, \b_{1j})\cong \oplus_{j=1}^{s'}sV(\s\l, \a_j)$.

(2) Assume that $\b+(-1)^{s'+1}\a{\l(a)}^{\ol s}=0$. Then ${\rm Ker}(\varphi)\cap M_{(\chi^i\s\l)}\neq 0$
for all $0\<i\<{\ol s}-1$, and hence ${\rm Ker}(\varphi^{\ol s}|_{M_{(\chi^{ls-1}\s\l)}})\neq 0$ for all $1\<l\<s'$.
This implies that ${\rm det}(A_l)=0$, and so ${\rm det}(C_l)=0$.
Hence $0$ is an eigenvalue of $C_l$.
We claim that the multiplicity of eigenvalue $0$  of $C_l$ is 1  for any $1\<l\<s'$.
In fact, ${\rm det}(F)=0$ by ${\rm det}(F)^{s'}={\rm det}(C_1) {\rm det}(C_2)\cdots{\rm det}(C_{s'})=0$.
Hence $0$ is an eigenvalue of $F$. By the discussion before, $\a_{ij}$ and $\a_{ij}^{s'}$ are all eigenvalue of $F$
and $F^{s'}$, respectively, where $1\<i,j\<s'$. For any $1\<i,j\<s'$, we have $\a_{ij}=\a_{1t}\xi^{i-1}$,
where $t=j-i+1$ if $i\<j$, and $t=s'+j-i+1$ if $i>j$.  Moreover, $\a_{11}, \a_{12}, \cdots, \a_{1s'}$
are distinct. It follows that  the multiplicity of eigenvalue $0$  of $F$ is $s'$, and so is that of  $F^{s'}$.
Hence there is a unique $j_0$ with $1\<j_0\<s'$ such that $\a_{1j_0}=0$ and $\a_{j_0}=0$.
Since $F^{s'}$ is similar to
$$\left(
\begin{array}{cccc}
C_1 & 0 & \cdots & 0 \\
0 & C_2 & \cdots & 0 \\
\vdots & \vdots & \ddots & \vdots \\
0 & \cdots & \cdots & C_{s'} \\
\end{array}
\right),$$
the multiplicity of eigenvalue $0$  of each $C_l$ is 1. Thus,  for any $1\<l\<s'$,
there exists an integer $j_l$ with $1\<j_l\<s'$ such that $\b_{lj_l}=0$ and $\b_{lj}\neq 0$
if $j\neq j_l$, where $1\<j\<s'$. It follows from \cite[Lemma 3.4]{SunChen} that $M$ contains a submodule $N_l$
with $N_l\cong\oplus_{1\<j\<s', j\neq j_l}sV(\chi^{{\ol s}-1}\s\l, \b_{lj})\cong \oplus_{1\<j\<s', j\neq j_l}sV(\s\l, \b_{lj})$.

Now let $0\<i\<{\ol s}-1$. Then $i=ns+r$ with $0\<r\<s-1$ and $0\<n\<s'-1$. Define $z_i\in M_{(\chi^i\s\l)}$ by
$z_i=\sum_{j=0}^n(-1)^j\l(a)^{js}m_{js}\ot v_{(n-j)s+r}+\b^{-1}\sum_{j=n+1}^{s'-1}(-1)^j\l(a)^{js}m_{js}\ot v_{(s'+n-j)s+r}$
for $0\<n\<s'-2$ (or equivalently, $0\<i\<{\ol s}-s-1$), and
$z_i=\sum_{j=0}^{s'-1}(-1)^j\l(a)^{js}m_{js}\ot v_{(s'-1-j)s+r}$
for $n=s'-1$ (or equivalently, ${\ol s}-s\<i\<{\ol s}-1$).
Then a straightforward computation shows that $\varphi^{s-1}(z_i)\neq 0$ but $\varphi^{s}(z_i)=0$
by $\b+(-1)^{s'+1}\a\l(a)^{\ol s}=0$.  It follows that the submodule $\langle z_i\rangle$ of $M$
generated by $z_i$ is isomorphic to $V_s(\chi^i\s\l)$. Obviously, the sum $\sum_{i=0}^{{\ol s}-1}\langle z_i\rangle$
is direct. Hence $M$ contains a submodule $U$ with $U\cong\oplus_{i=0}^{{\ol s}-1}V_s(\chi^i\s\l)$.
Since every simple submodule of $U$ is 1-dimensional and every simple submodule of $N_l$
is $\ol s$-dimensional, the sum $N_l+U$ is direct, and consequently, $M=N_l\oplus U$
by comparing their dimensions, where $1\<l\<s'$. It follows from Krull-Schmidt Theorem that
$N_l\cong N_1$, and so
$$\oplus_{1\<j\<s', j\neq j_l}V(\s\l, \b_{lj})\cong \oplus_{1\<j\<s', j\neq j_1}V(\s\l, \b_{1j}),$$
for any $2\<l\<s'$. Thus, for any  $2\<l\<s'$, there is a permutation $\pi_l$ of the set $\{1, 2, \cdots, s'\}$
with $\pi_l(j_l)=j_1$ such that $\b_{lj}=\b_{1\pi_l(j)}$ for all $1\<j\<s'$.
Then by the discussion in (a), one can see that
$\oplus_{1\<j\<s', j\neq j_1}V(\s\l, \b_{1j})\cong \oplus_{1\<j\<s', j\neq j_0}V(\s\l, \a_j)$.
Therefore,
$$M=N_1\oplus U\cong (\oplus_{1\<j\<s', j\neq j_0}sV(\s\l, \a_j))\oplus(\oplus_{j=0}^{{\ol s}-1}V_{s}(\chi^j\s\l)).$$
\end{proof}

\begin{theorem}\thlabel{3.14}
Let $n, t\in\mathbb{Z}$ with $n, t\>1$, $\s, \l\in\hat{G}$ and $\a, \b\in k^{\times}$.
With the notations given in \noref{3.0}, we have\\
{\rm (1)} if $\b+(-1)^{s'+1}\a{\l(a)}^{\ol s}\neq 0$, then
$$V_n(\s,\alpha)\ot V_t(\l,\b)\cong\oplus_{i=1}^{{\rm min}\{n,t\}}\oplus_{j=1}^{s'}sV_{2i-1+|n-t|}(\s\l,\a_j);$$
{\rm (2)} if $\b+(-1)^{s'+1}\a{\l(a)}^{\ol s}= 0$, then
$$\begin{array}{rl}
&V_n(\s,\alpha)\ot V_t(\l,\b)\\
\cong&(\oplus_{i=1}^{{\rm min}\{n,t\}}\oplus_{1\<j\<s',j\neq j_0}sV_{2i-1+|n-t|}(\s\l,\a_j))
\oplus(\oplus_{i=1}^{{\rm min}\{n,t\}}\oplus_{j=0}^{{\ol s}-1}V_{(2i-1+|n-t|)s}(\chi^j\s\l)),\\
\end{array}$$
where $1\<j_0\<s'$ with $\a_{j_0}=0$ as given in \leref{3.13}.
\end{theorem}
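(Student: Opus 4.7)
My approach is induction, with the base case $n = t = 1$ delivered by \leref{3.13}. The inductive engine compares two associations of a triple tensor product involving $V_{s+1}(\e)$ and uses Krull--Schmidt to isolate the unknown term. The key identity is that by \leref{3.10} (with $\l = \e$, so both one-sided formulas agree),
\[
V_{s+1}(\e) \otimes V_m(\mu,\g) \cong (s-1)V_m(\mu,\g) \oplus V_{m-1}(\mu,\g) \oplus V_{m+1}(\mu,\g),
\]
while \leref{3.5}(1) governs the behaviour of $V_{s+1}(\e)$ against the nilpotent summands $V_{ms}(\chi^j\sigma\l)$ that appear in Case~(2) (these all have dimension divisible by $s$), and \prref{3.12} describes $V_n(\sigma,\a) \otimes V_{s+1}(\e)$.

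Stage~1: fix $t = 1$ and induct on $n$. For $n \geq 2$, I compute $V_{s+1}(\e) \otimes V_{n-1}(\sigma,\a) \otimes V_1(\l,\b)$ in two ways. Left association splits $V_{s+1}(\e) \otimes V_{n-1}(\sigma,\a)$ via \leref{3.10} and gives
\[
(s-1)[V_{n-1}(\sigma,\a) \otimes V_1(\l,\b)] \oplus [V_{n-2}(\sigma,\a) \otimes V_1(\l,\b)] \oplus [V_n(\sigma,\a) \otimes V_1(\l,\b)].
\]
Right association uses the induction hypothesis to decompose $V_{n-1}(\sigma,\a) \otimes V_1(\l,\b)$, then distributes $V_{s+1}(\e)$ over each summand, employing \leref{3.10} for the non-nilpotent pieces and \leref{3.5}(1) for the nilpotent ones. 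Equating the two expressions, cancelling the common copy of $(s-1)[V_{n-1} \otimes V_1]$, and applying Krull--Schmidt solves for $V_n(\sigma,\a) \otimes V_1(\l,\b)$ as the difference of two fully explicit modules, which I then match against the theorem's formula at $t = 1$.

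Stage~2: induct on $t$, with Stage~1 supplying the base $t = 1$ for all $n$. For $t \geq 2$ and arbitrary $n \geq 1$, I compute $V_n(\sigma,\a) \otimes V_{s+1}(\e) \otimes V_{t-1}(\l,\b)$ two ways. Right association expands to
\[
(s-1)[V_n \otimes V_{t-1}] \oplus [V_n \otimes V_{t-2}] \oplus [V_n \otimes V_t],
\]
while left association, using $V_n(\sigma,\a) \otimes V_{s+1}(\e) \cong (s-1)V_n(\sigma,\a) \oplus V_{n-1}(\sigma,\a) \oplus V_{n+1}(\sigma,\a)$ from \prref{3.12}, yields $(s-1)[V_n \otimes V_{t-1}] \oplus [V_{n-1} \otimes V_{t-1}] \oplus [V_{n+1} \otimes V_{t-1}]$. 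All four of $V_n \otimes V_{t-1}$, $V_n \otimes V_{t-2}$, $V_{n-1} \otimes V_{t-1}$, and $V_{n+1} \otimes V_{t-1}$ are known by the outer $t$-induction (with $V_0 = 0$ when $t = 2$), so Krull--Schmidt again yields a closed form for $V_n(\sigma,\a) \otimes V_t(\l,\b)$ that I check against the claimed formula.

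The main obstacle is the matching in Case~(2). The non-nilpotent summands $sV_{2i-1+|n-t|}(\sigma\l,\a_j)$ shift under $V_{s+1}(\e) \otimes -$ via a three-term telescoping in $i$, so their bookkeeping parallels the corresponding proof in \cite{SunChen}. The nilpotent summands $V_{(2i-1+|n-t|)s}(\chi^j\sigma\l)$ are harder: \leref{3.5}(1) spreads each one into two $i$-shifted summands with twist $\chi^j$ or $\chi^{j+s}$, plus a band of $s - 1$ summands at the same size with twists $\chi^{j+k}$ for $1 \le k \le s - 1$. Verifying that these contributions telescope across both sides of the Krull--Schmidt equation into exactly the family $\oplus_{i=1}^{\min\{n,t\}} \oplus_{j=0}^{\ol s - 1} V_{(2i-1+|n-t|)s}(\chi^j\sigma\l)$ predicted by the theorem is the delicate, but finite, part of the argument.
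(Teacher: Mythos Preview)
Your proposal is correct and follows the same architecture as the paper's proof: a double induction driven by associating a triple tensor product two different ways and invoking Krull--Schmidt. The only substantive differences are cosmetic. First, the paper uses $V_{2s}(\e)$ as the auxiliary module (via \coref{3.11} and \leref{3.6}), whereas you use $V_{s+1}(\e)$ (via \leref{3.10} and \leref{3.5}(1)). The paper's choice gives the cleaner two-term recursion $V_{2s}(\e)\otimes V_m(\mu,\g)\cong sV_{m-1}(\mu,\g)\oplus sV_{m+1}(\mu,\g)$, so one cancels a global factor of $s$ by Krull--Schmidt rather than cancelling an $(s-1)$-fold middle term as you do; either route works. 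Second, the paper runs the outer induction on $n$ (establishing $n=1$ for all $t$ first, then increasing $n$), while you swap the roles of $n$ and $t$; since the target formula depends only on $\min\{n,t\}$ and $|n-t|$, this is immaterial. Your telescoping check for the nilpotent summands is exactly the computation the paper carries out (with the shift $\chi^j\mapsto\chi^{j+s}$ absorbed by the full range $0\le j\le\ol s-1$), so there is no gap.
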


\begin{proof}
We only prove (2) since the proof is similar for (1).
Assume $\b+(-1)^{s'+1}\a{\l(a)}^{\ol s}=0$. We work by induction on $n$.
For $n=1$, we work by induction on $t$. If $t=1$, then it follows from \leref{3.13}.
For $t=2$, by \leref{3.6}, \coref{3.11} and \leref{3.13}, we have
$$V(\s,\a)\ot V(\l,\b)\ot V_{2s}(\e)\cong sV(\s,\a)\ot V_2(\l,\b)$$\\
and
$$\begin{array}{rl}
&V(\s,\a)\ot V(\l,\b)\ot V_{2s}(\e)\\
\cong&((\oplus_{1\<j\<s', j\neq j_0}sV(\s\l, \a_j))
\oplus(\oplus_{j=0}^{{\ol s}-1}V_{s}(\chi^j\s\l)))\ot V_{2s}(\e)\\
\cong&(\oplus_{1\<j\<s', j\neq j_0}s^2V_2(\s\l, \a_j))
\oplus(\oplus_{j=0}^{{\ol s}-1}\oplus_{i=0}^{s-1}V_{2s}(\chi^{i+j}\s\l))\\
\cong&(\oplus_{1\<j\<s', j\neq j_0}s^2V_2(\s\l, \a_j)
\oplus(\oplus_{j=0}^{{\ol s}-1}sV_{2s}(\chi^{j}\s\l)).\\
\end{array}$$
Then it follows from Krull-Schmidt Theorem that
$$V(\s,\a)\ot V_2(\l,\b)\cong(\oplus_{1\<j\<s', j\neq j_0}sV_2(\s\l, \a_j))
\oplus(\oplus_{j=0}^{{\ol s}-1}V_{2s}(\chi^j\s\l)).$$
Now let $t\>2$. Then by \leref{3.6}, \coref{3.11} and the induction hypothesis, we have
$$\begin{array}{rl}
&V(\s,\a)\ot V_t(\l,\b)\ot V_{2s}(\e)\\
\cong& sV(\s,\a)\ot V_{t-1}(\l,\b)\oplus sV(\s,\a)\ot V_{t+1}(\l,\b)\\
\cong&(\oplus_{1\<j\<s',j\neq j_0}s^2V_{t-1}(\s\l,\a_j))
\oplus(\oplus_{j=0}^{{\ol s}-1}sV_{(t-1)s}(\chi^j\s\l))
\oplus sV(\s,\a)\ot V_{t+1}(\l,\b)\\
\end{array}$$
and
$$\begin{array}{rl}
&V(\s,\a)\ot V_t(\l,\b)\ot V_{2s}(\e)\\
\cong&((\oplus_{1\<j\<s', j\neq j_0}sV_t(\s\l, \a_j))
\oplus(\oplus_{j=0}^{{\ol s}-1}V_{ts}(\chi^j\s\l)))\ot V_{2s}(\e)\\
\cong&(\oplus_{1\<j\<s', j\neq j_0}s^2(V_{t-1}(\s\l, \a_j)\oplus V_{t+1}(\s\l, \a_j)))\\
&\oplus(\oplus_{j=0}^{{\ol s}-1}\oplus_{p=0}^1\oplus_{i=0}^{s-1}V_{(t+1-2p)s}(\chi^{j+ps+i}\s\l))\\
\cong&(\oplus_{1\<j\<s', j\neq j_0}s^2(V_{t-1}(\s\l, \a_j)\oplus V_{t+1}(\s\l, \a_j)))\\
&\oplus(\oplus_{j=0}^{{\ol s}-1}\oplus_{i=0}^{s-1}(V_{(t+1)s}(\chi^{j+i}\s\l)\oplus V_{(t-1)s}(\chi^{j+s+i}\s\l)))\\
\cong&(\oplus_{1\<j\<s', j\neq j_0}s^2(V_{t-1}(\s\l, \a_j)\oplus V_{t+1}(\s\l, \a_j)))\\
&\oplus(\oplus_{j=0}^{{\ol s}-1}sV_{(t+1)s}(\chi^j\s\l))\oplus(\oplus_{j=0}^{{\ol s}-1}sV_{(t-1)s}(\chi^j\s\l)).\\
\end{array}$$
Then it follows from Krull-Schmidt Theorem that
$$V(\s,\a)\ot V_{t+1}(\l,\b)\cong(\oplus_{1\<j\<s', j\neq j_0}sV_{t+1}(\s\l, \a_j))
\oplus(\oplus_{j=0}^{{\ol s}-1}V_{(t+1)s}(\chi^j\s\l)).$$

For $n=2$, by \leref{3.6}, \coref{3.11} and the result for $n=1$ above, we have
$$V_{2s}(\e)\ot V(\s, \a)\ot V_t(\l,\b)\cong sV_{2}(\s,\a)\ot V_t(\l,\b)$$ \\
and
$$\begin{array}{rl}
V_{2s}(\e)\ot V(\s, \a)\ot V_t(\l,\b)
\cong&V_{2s}(\e)\ot((\oplus_{1\<j\<s', j\neq j_0}sV_t(\s\l, \a_j))
\oplus(\oplus_{j=0}^{{\ol s}-1}V_{ts}(\chi^j\s\l)))\\
\cong&(\oplus_{1\<j\<s', j\neq j_0}s^2(V_{t+1}(\s\l, \a_j)\oplus V_{t-1}(\s\l, \a_j)))\\
&\oplus(\oplus_{j=0}^{{\ol s}-1}\oplus_{i=0}^{s-1}(V_{(t+1)s}(\chi^{j+i}\s\l)\oplus V_{(t-1)s}(\chi^{j+i+s}\s\l)))\\
\cong&(\oplus_{1\<j\<s', j\neq j_0}s^2(V_{t+1}(\s\l, \a_j)\oplus V_{t-1}(\s\l, \a_j)))\\
&\oplus(\oplus_{j=0}^{{\ol s}-1}(sV_{(t+1)s}(\chi^{j}\s\l)\oplus sV_{(t-1)s}(\chi^{j}\s\l))),\\
\end{array}$$
and so
$$\begin{array}{rl}
&V_2(\s,\a)\ot V_t(\l,\b)\\
\cong&\oplus_{1\<j\<s', j\neq j_0}(sV_{t+1}(\s\l, \a_j)\oplus sV_{t-1}(\s\l, \a_j))
\oplus(\oplus_{j=0}^{{\ol s}-1}(V_{(t+1)s}(\chi^j\s\l)\oplus V_{(t-1)s}(\chi^j\s\l)))\\
\cong&(\oplus_{i=1}^{{\rm min}\{2,t\}}\oplus_{1\<j\<s',j\neq j_0}sV_{2i-1+|2-t|}(\s\l,\a_j))
\oplus(\oplus_{i=1}^{{\rm min}\{2,t\}}\oplus_{j=0}^{{\ol s}-1}V_{(2i-1+|2-t|)s}(\chi^j\s\l)).\\
\end{array}$$
Now let $n\>3$. By \leref{3.6}, \coref{3.11} and the induction hypothesis, we have
$$\begin{array}{rl}
&V_{2s}(\e)\ot V_{n-1}(\s,\a)\ot V_t(\l,\b)\\
\cong&sV_n(\s,\a)\ot V_t(\l,\b)\oplus sV_{n-2}(\s,\a)\ot V_t(\l,\b)\\
\cong&sV_n(\s,\a)\ot V_t(\l,\b)
\oplus(\oplus_{i=1}^{{\rm min}\{n-2,t\}}\oplus_{1\<j\<s', j\neq j_0}s^2V_{2i-1+|n-t-2|}(\s\l, \a_j))\\
&\oplus(\oplus_{i=1}^{{\rm min}\{n-2,t\}}\oplus_{j=0}^{{\ol s}-1}sV_{(2i-1+|n-t-2|)s}(\chi^j\s\l))\\
\end{array}$$
and
$$\begin{array}{rl}
&V_{2s}(\e)\ot V_{n-1}(\s, \a)\ot  V_t(\l, \b)\\
\cong&(\oplus_{i=1}^{{\rm min}\{n-1,t\}}\oplus_{1\<j\<s',j\neq j_0}sV_{2s}(\e)\ot V_{2i-1+|n-t-1|}(\s\l,\a_j))\\
&\oplus(\oplus_{i=1}^{{\rm min}\{n-1,t\}}\oplus_{j=0}^{{\ol s}-1}V_{2s}(\e)\ot V_{(2i-1+|n-t-1|)s}(\chi^j\s\l))\\
\cong&(\oplus_{i=1}^{{\rm min}\{n-1,t\}}\oplus_{1\<j\<s',j\neq j_0}s^2(V_{2i+|n-t-1|}(\s\l,\a_j)\oplus V_{2i-2+|n-t-1|}(\s\l,\a_j)))\\
&\oplus(\oplus_{i=1}^{{\rm min}\{n-1,t\}}\oplus_{j=0}^{{\ol s}-1}\oplus_{l=0}^{s-1}(V_{(2i+|n-t-1|)s}(\chi^{j+l}\s\l)
\oplus V_{(2i-2+|n-t-1|)s}(\chi^{j+l+s}\s\l)))\\
\cong&(\oplus_{i=1}^{{\rm min}\{n-1,t\}}\oplus_{1\<j\<s',j\neq j_0}s^2(V_{2i+|n-t-1|}(\s\l,\a_j)\oplus V_{2i-2+|n-t-1|}(\s\l,\a_j)))\\
&\oplus(\oplus_{i=1}^{{\rm min}\{n-1,t\}}\oplus_{j=0}^{{\ol s}-1}(sV_{(2i+|n-t-1|)s}(\chi^{j}\s\l)
\oplus sV_{(2i-2+|n-t-1|)s}(\chi^{j}\s\l))).\\
\end{array}$$
Then by a straightforward computation for $n-1<t$, $n-1=t$ and $n-1>t$, respectively,
it  follows from Krull-Schmidt Theorem that
$$\begin{array}{rl}
&V_n(\s,\alpha)\ot V_t(\l,\b)\\
\cong&(\oplus_{i=1}^{{\rm min}\{n,t\}}\oplus_{1\<j\<s',j\neq j_0}sV_{2i-1+|n-t|}(\s\l,\a_j))
\oplus(\oplus_{i=1}^{{\rm min}\{n,t\}}\oplus_{j=0}^{{\ol s}-1}V_{(2i-1+|n-t|)s}(\chi^j\s\l)).\\
\end{array}$$
This completes proof.
\end{proof}

\section{The Green ring of the category of weight modules}\selabel{4}

In this section, we investigate the Green ring $r(\mathcal W)$ of the tensor category $\mathcal W$, which is the subring of $r(H)$
generated by all finite dimensional weight modules over $H$.
By \cite[Lemma 3.3, Propositions 3.17 and 4.2]{WangYouChen}, the map
$\hat{G}\ra r(\mathcal W)$, $\l\mapsto [V_1(\l)]$, can be uniquely extended to
a ring monomorphism ${\mathbb Z}\hat{G}\ra r(\mathcal W)$.
Hence we may regard the group ring ${\mathbb Z}\hat{G}$ as a subring of $r(\mathcal W)$
under the identification $[V_1(\l)]=\l$, $\l\in\hat{G}$. We state the observation as follows.

\begin{lemma}\lelabel{4.1}
The commutative group ring ${\mathbb Z}\hat{G}$ is a subring of $r(\mathcal W)$
under the identification $[V_1(\l)]=\l$, $\l\in\hat{G}$.
\end{lemma}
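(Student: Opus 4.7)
The plan is to verify three things: (1) the 1-dimensional weight modules $V_1(\l)$ tensor-multiply according to the multiplication in $\hat{G}$; (2) the assignment $\l\mapsto[V_1(\l)]$ extends uniquely to a ring homomorphism $\Phi\colon{\mathbb Z}\hat{G}\to r(\mathcal W)$; and (3) this homomorphism is injective, so that ${\mathbb Z}\hat{G}$ may be identified with its image in $r(\mathcal W)$.

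For step (1), I would fix $\l,\mu\in\hat{G}$ with basis vectors $m_0\in V_1(\l)$ and $n_0\in V_1(\mu)$ and compute directly using $\D(g)=g\ot g$ and $\D(x)=x\ot a+1\ot x$. Since $x$ acts as zero on both factors, this yields $g\cdot(m_0\ot n_0)=\l(g)\mu(g)(m_0\ot n_0)$ and $x\cdot(m_0\ot n_0)=0$, so $V_1(\l)\ot V_1(\mu)\cong V_1(\l\mu)$. Moreover $V_1(\e)$ is the trivial $H$-module, which is the tensor unit of $\mathcal W$. Together these two facts show that $\l\mapsto [V_1(\l)]$ is a group homomorphism from $\hat{G}$ into the group of units of $r(\mathcal W)$, which by the universal property of the group ring extends uniquely to the ring homomorphism $\Phi$ of step (2).

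For step (3), I would invoke \prref{2.1} (or \prref{2.2}), each of which lists the modules $V_1(\l)$ among the pairwise non-isomorphic finite dimensional indecomposable weight modules of $H$; distinct $\l$ give non-isomorphic $V_1(\l)$ since the weight space recovers $\l$. Hence $\{[V_1(\l)]\}_{\l\in\hat{G}}$ is part of the canonical ${\mathbb Z}$-basis of $r(\mathcal W)$ indexed by the isomorphism classes of indecomposable objects, and is therefore ${\mathbb Z}$-linearly independent; this forces $\Phi$ to be injective. Commutativity of ${\mathbb Z}\hat{G}$ is immediate from the abelianness of $\hat{G}$.

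No serious obstacle should arise here: once the classification of indecomposable weight modules recalled in \seref{2} is in hand, the content of the lemma reduces to the one-line tensor-product computation of step (1), after which the ring-theoretic assertions are purely formal. This is presumably why the paper records it simply as an observation.
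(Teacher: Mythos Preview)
Your proposal is correct. The paper does not supply a separate proof of this lemma; it merely records it as an observation, justified in the preceding paragraph by citing \cite[Lemma~3.3, Propositions~3.17 and 4.2]{WangYouChen}. Your direct argument---computing $V_1(\l)\ot V_1(\mu)\cong V_1(\l\mu)$ from the coproduct, extending via the universal property of the group ring, and reading off injectivity from the classification in \prref{2.1}/\prref{2.2}---is exactly the content those citations encode, so your route and the paper's are substantively the same; you have simply unpacked what the paper delegates to a reference.
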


Throughout the following, let $q=\chi^{-1}(a)$, and let $|q|$ denote the order of $q$. Then $|q|=\infty$ or $|q|=s<\infty$.
For any $V\in\mathcal{W}$ and $m\in\mathbb{Z}$ with $m\>0$, define $V^{\ot m}$ by
$V^{\ot 0}=V_1(\e)$ for $m=0$, $V^{\ot 1}=V$ for $m=1$, and $V^{\ot m}=V\ot V\ot\cdots\ot V$, the tensor product of
$m$-folds of $V$, for $m>1$. Note that $[V_1(\e)]=\e$ is the identity of the ring $r(\mathcal W)$.
For any real number $r$, let $[r]$ denote the integer part of $r$, i.e., $[r]$ is the maximal integer
with respect to $[r]\<r$. Let $\binom{0}{0}=1$.

{\bf Convention}: For a sum $r+\sum_{l\<i\<m}r_i$ of some elements in a ring $R$, we regard
$\sum_{l\<i\<m}r_i=0$ whenever $m<l$.

\begin{lemma}\lelabel{4.2}
Let $0\<m<|q|$. Then
$V_2(\epsilon)^{\otimes m}\cong \oplus_{i=0}^{[\frac{m}{2}]}\frac{m-2i+1}{m-i+1}\binom{m}{i}V_{m+1-2i}(\chi^i).$
\end{lemma}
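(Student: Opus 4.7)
The plan is to argue by induction on $m$, using a Clebsch--Gordan-type rule for tensoring with $V_2(\epsilon)$. The base case $m=0$ reduces to $V_1(\epsilon)\cong V_1(\chi^0)$, which matches since the only nonzero term in the sum is $i=0$ with coefficient $1$. For the inductive step, I first establish the auxiliary fact that for any $\lambda\in\hat{G}$ and any $n$ with $1\<n<|q|$, one has
\[
V_n(\lambda)\ot V_2(\epsilon)\;\cong\;V_{n+1}(\lambda)\oplus V_{n-1}(\chi\lambda),
\]
under the convention $V_0(\mu)=0$. When $|q|=\infty$ this is the standard Clebsch--Gordan rule for nilpotent-type modules established in \cite{SunChen}. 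When $|q|=s<\infty$, it is a specialization of \prref{3.8} with $t=2$ and $n\<s-1$: one checks that in every subcase the outer sums collapse to $i=0$ and only two indecomposable summands survive.

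Assuming the formula for $m$, I expand
\[
V_2(\epsilon)^{\ot(m+1)}\;\cong\;\bigoplus_{i=0}^{[m/2]}C(m,i)\bigl(V_{m+1-2i}(\chi^i)\ot V_2(\epsilon)\bigr),
\]
where $C(m,i):=\frac{m-2i+1}{m-i+1}\binom{m}{i}$. Since $m+1-2i\<m+1\<|q|-1$, the auxiliary fact applies to each factor, yielding
\[
V_{m+1-2i}(\chi^i)\ot V_2(\epsilon)\;\cong\;V_{m+2-2i}(\chi^i)\oplus V_{m-2i}(\chi^{i+1}).
\]
Collecting the coefficient of each indecomposable $V_{m+2-2j}(\chi^j)$ produces $C(m,j)+C(m,j-1)$ (the first summand from $i=j$ in the first branch, the second from $i=j-1$ in the second), and the inductive step reduces to the combinatorial identity $C(m,j)+C(m,j-1)=C(m+1,j)$.

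To verify this identity I rewrite $C(m,i)=\binom{m}{i}-\binom{m}{i-1}$, which is an immediate manipulation of the definition. Pascal's rule then gives
\[
C(m+1,j)=\binom{m+1}{j}-\binom{m+1}{j-1}=\binom{m}{j}-\binom{m}{j-2}=C(m,j)+C(m,j-1),
\]
completing the induction. The main obstacle is the boundary bookkeeping: (a) ensuring $V_{m-2i}(\chi^{i+1})$ is correctly read as $0$ when $m=2i$, which is consistent with $C(m,[m/2]+1)=0$; (b) confirming that $C(m,j)=0$ for $j>[m/2]$ so that extending the summation ranges is harmless; and (c) checking that the hypothesis $m<|q|$ is used precisely to guarantee that the auxiliary Clebsch--Gordan rule applies to every summand arising during the induction, since it is this rule that fails once one of the dimensions $m+1-2i$ reaches $|q|$.
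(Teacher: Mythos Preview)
Your proof is correct and follows the same induction-plus-Clebsch--Gordan strategy as the paper, which likewise invokes the two-term rule $V_2(\epsilon)\otimes V_n(\lambda)\cong V_{n+1}(\lambda)\oplus V_{n-1}(\chi\lambda)$ (citing \cite[Theorem 3.3, Lemma 3.10(1)]{SunChen} directly rather than extracting it from \prref{3.8}). Your verification of the coefficient recursion via the rewriting $C(m,i)=\binom{m}{i}-\binom{m}{i-1}$ and Pascal's rule is a bit cleaner than the paper's separate even/odd case computation, but otherwise the arguments coincide.
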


\begin{proof}
We show the lemma by induction on $m$. The lemma holds obviously for $m=0$ and $m=1$.
Now let $1<m<|q|$. If $m=2l$ is even, then by the induction hypothesis,
\cite[Theorem 3.3, Lemma 3.10(1)]{SunChen} and the explanation before \leref{3.5},
we have

$$\begin{array}{rl}
V_2(\epsilon)^{\otimes m}&\cong V_2(\epsilon)\ot V_2(\epsilon)^{\otimes m-1}\\
&\cong\oplus_{i=0}^{l-1}\frac{2l-2i}{2l-i}\binom{2l-1}{i}V_2(\epsilon)\ot V_{2l-2i}(\chi^i)\\
&\cong\oplus_{i=0}^{l-1}\frac{2l-2i}{2l-i}\binom{2l-1}{i}(V_{2l+1-2i}(\chi^i)\oplus V_{2l-1-2i}(\chi^{i+1}))\\
&\cong(\oplus_{i=0}^{l-1}\frac{2l-2i}{2l-i}\binom{2l-1}{i}V_{2l+1-2i}(\chi^i))
\oplus(\oplus_{i=1}^{l}\frac{2l-2i+2}{2l-i+1}\binom{2l-1}{i-1}V_{2l+1-2i}(\chi^{i}))\\
&\cong V_{2l+1}(\epsilon)\oplus\frac{2}{l+1}\binom{2l-1}{l-1}V_1(\chi^{l})\oplus
(\oplus_{1\<i<l-1}(\frac{2l-2i}{2l-i}\binom{2l-1}{i}+\frac{2l-2i+2}{2l-i+1}\binom{2l-1}{i-1})V_{2l+1-2i}(\chi^{i}))\\
&\cong V_{2l+1}(\epsilon)\oplus\frac{1}{l+1}\binom{2l}{l}V_1(\chi^{l})\oplus
(\oplus_{1\<i\<l-1}\frac{2l-2i+1}{2l-i+1}\binom{2l}{i}V_{2l+1-2i}(\chi^{i}))\\
&\cong \oplus_{i=0}^{[\frac{m}{2}]}\frac{m-2i+1}{m-i+1}\binom{m}{i}V_{m+1-2i}(\chi^i).\\
\end{array}$$
If $m=2l+1$ is odd, then by the same reason as above, we have
$$\begin{array}{rl}
V_2(\epsilon)^{\otimes m}&\cong V_2(\epsilon)\ot V_2(\epsilon)^{\otimes m-1}\\
&\cong\oplus_{i=0}^l\frac{2l-2i+1}{2l-i+1}\binom{2l}{i}V_2(\epsilon)\ot V_{2l+1-2i}(\chi^i)\\
&\cong\oplus_{i=0}^l\frac{2l-2i+1}{2l-i+1}\binom{2l}{i}(V_{2l+2-2i}(\chi^i)\oplus V_{2l-2i}(\chi^{i+1}))\\
&\cong(\oplus_{i=0}^l\frac{2l-2i+1}{2l-i+1}\binom{2l}{i}V_{2l+2-2i}(\chi^i))
\oplus(\oplus_{i=1}^{l+1}\frac{2l-2i+3}{2l-i+2}\binom{2l}{i-1}V_{2l+2-2i}(\chi^{i}))\\
&\cong V_{2l+2}(\epsilon)\oplus
(\oplus_{i=1}^l(\frac{2l-2i+1}{2l-i+1}\binom{2l}{i}+\frac{2l-2i+3}{2l-i+2}\binom{2l}{i-1})V_{2l+2-2i}(\chi^{i}))\\
&\cong V_{2l+2}(\epsilon)\oplus(\oplus_{i=1}^l\frac{2l-2i+2}{2l-i+2}\binom{2l+1}{i}V_{2l+2-2i}(\chi^{i}))\\
&\cong\oplus_{i=0}^{[\frac{m}{2}]}\frac{m-2i+1}{m-i+1}\binom{m}{i}V_{m+1-2i}(\chi^i).\\
\end{array}$$
\end{proof}

\subsection{The case of $|\chi|=|q|=\infty$}\selabel{4.1}
In this subsection, we consider the case that $|\chi|=\infty$ and $q$ is not a root of unity.
Throughout this subsection, assume that $|\chi|=|q|=\infty$.
By \prref{2.1} and \cite[Theorem 3.3]{SunChen}, one knows that $r(\mathcal W)$ is a commutative ring.
Let $y=[V_2(\epsilon)]$ in $r(\mathcal W)$.

\begin{lemma}\lelabel{4.3}
Let $m\>1$. Then $[V_m(\epsilon)]=\sum_{i=0}^{[\frac{m-1}{2}]}(-1)^{i}\binom{m-1-i}{i}\chi^iy^{m-1-2i}$ in $r(\mathcal W)$.
\end{lemma}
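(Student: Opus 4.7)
The plan is to proceed by strong induction on $m$, using a two-term linear recursion for $[V_m(\epsilon)]$ that comes directly from the Clebsch-Gordan rule available in the $|\chi|=\infty$ case.

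First I would record the base cases $m=1$ and $m=2$: the right-hand side evaluates to $\binom{0}{0}\chi^0 y^0=\epsilon=[V_1(\epsilon)]$ and to $\binom{1}{0}\chi^0 y^1=y=[V_2(\epsilon)]$ respectively. Next, applying \cite[Theorem~3.3]{SunChen} (which governs tensor products of nilpotent type modules when $|\chi|=\infty$) to $V_2(\epsilon)\otimes V_m(\epsilon)$ yields
\begin{equation*}
V_2(\epsilon)\otimes V_m(\epsilon)\cong V_{m+1}(\epsilon)\oplus V_{m-1}(\chi), \quad m\geq 1,
\end{equation*}
with the convention $V_0(\chi)=0$. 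Since $V_1(\chi)\otimes V_{m-1}(\epsilon)\cong V_{m-1}(\chi)$ (one can read this off from the action definition of $V_t(\lambda)$ at the start of Section~2), the identification $[V_1(\lambda)]=\lambda$ from \leref{4.1} gives $[V_{m-1}(\chi)]=\chi[V_{m-1}(\epsilon)]$. Passing to $r(\mathcal W)$, we obtain the key recursion
\begin{equation*}
[V_{m+1}(\epsilon)]=y\,[V_m(\epsilon)]-\chi\,[V_{m-1}(\epsilon)], \quad m\geq 2.
\end{equation*}

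With the recursion in hand, the inductive step is a purely combinatorial identity. Write $f_m$ for the proposed right-hand side. Then
\begin{equation*}
y f_m - \chi f_{m-1} = \sum_{i=0}^{[(m-1)/2]}(-1)^i\binom{m-1-i}{i}\chi^i y^{m-2i} + \sum_{i=1}^{[(m-2)/2]+1}(-1)^i\binom{m-1-i}{i-1}\chi^i y^{m-2i}
\end{equation*}
after the index shift $j\mapsto i-1$ in the second sum. Combining the two sums term-by-term and invoking the Pascal identity $\binom{m-1-i}{i}+\binom{m-1-i}{i-1}=\binom{m-i}{i}$ produces exactly $f_{m+1}=\sum_{i=0}^{[m/2]}(-1)^i\binom{m-i}{i}\chi^i y^{m-2i}$, which closes the induction.

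The only mildly delicate point—and the one I would treat with care—is matching the upper limits of the two sums, since $[(m-1)/2]$ and $[(m-2)/2]+1$ behave differently depending on the parity of $m$. For $m=2k$ the topmost index $i=k$ appears only in the second sum, contributing $(-1)^k\chi^k$, which equals $(-1)^k\binom{k}{k}\chi^k$; for $m=2k+1$ the index $i=k$ appears in both sums, and Pascal gives $(-1)^k\binom{k+1}{k}\chi^k y$. Both match the $i=[m/2]$ term of $f_{m+1}$, so no essential obstacle arises; the rest is bookkeeping.
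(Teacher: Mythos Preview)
Your proof is correct and follows essentially the same route as the paper: both use the recursion $[V_{m+1}(\epsilon)]=y[V_m(\epsilon)]-\chi[V_{m-1}(\epsilon)]$ coming from \cite[Theorem~3.3]{SunChen}, verify the base cases $m=1,2$, and close the induction via Pascal's identity. The only cosmetic difference is that the paper indexes the induction as $[V_m]=y[V_{m-1}]-\chi[V_{m-2}]$, and it does not separate out the parity check on the top index that you treat explicitly.
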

\begin{proof}
We work by induction  $m$. For $m=1$ and $m=2$, it is obvious. Now let $m>2$. Then  by \cite[Theorem 3.3]{SunChen},
we have $V_2(\e)\ot V_{m-1}(\e)\cong V_m(\e)\oplus V_{m-2}(\chi)\cong V_m(\e)\oplus V_1(\chi)\ot V_{m-2}(\e)$.
Thus, by the induction hypothesis, one gets
$$\begin{array}{rl}
[V_m(\e)]&=[V_2(\e)][V_{m-1}(\e)]-[V_1(\chi)][V_{m-2}(\e)]\\
&=y(\sum_{i=0}^{[\frac{m-2}{2}]}(-1)^{i}\binom{m-2-i}{i}\chi^iy^{m-2-2i})
-\chi(\sum_{i=0}^{[\frac{m-3}{2}]}(-1)^{i}\binom{m-3-i}{i}\chi^iy^{m-3-2i})\\
&=\sum_{i=0}^{[\frac{m-2}{2}]}(-1)^{i}\binom{m-2-i}{i}\chi^iy^{m-1-2i}
+\sum_{i=1}^{[\frac{m-1}{2}]}(-1)^i\binom{m-2-i}{i-1}\chi^iy^{m-1-2i}\\
&=\sum_{i=0}^{[\frac{m-1}{2}]}(-1)^{i}\binom{m-1-i}{i}\chi^iy^{m-1-2i}.\\
\end{array}$$
\end{proof}

\begin{corollary}\colabel{4.4}
$r(\mathcal W)$ is generated, as a ring, by $\hat G$ and $y$.
Moreover, $\{\l y^t|t\>0,\l \in \hat{G}\}$	is a $\mathbb{Z}$-basis of $r(\mathcal W)$.
\end{corollary}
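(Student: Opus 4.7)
The plan is to deduce this corollary directly from \leref{4.3}, together with a triangularity argument based on \leref{4.2}.

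First I would establish ring-theoretic generation. By \prref{2.1}, the set $\{[V_t(\lambda)]\mid t\>1,\lambda\in\hat G\}$ is a $\mathbb Z$-basis of $r(\mathcal W)$. The explicit module structure recalled after \prref{2.1} gives a natural isomorphism $V_1(\lambda)\ot V_t(\epsilon)\cong V_t(\lambda)$: tensoring with $V_1(\lambda)$ twists the $G$-weight by $\lambda$ while leaving the $x$-action intact, since $x$ acts as $0$ on $V_1(\lambda)$ and $\Delta(x)=x\ot a+1\ot x$. Hence $[V_t(\lambda)]=\lambda\cdot[V_t(\epsilon)]$ in $r(\mathcal W)$. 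By \leref{4.3}, each $[V_t(\epsilon)]$ is a polynomial in $y$ with coefficients in $\mathbb Z\hat G$, so every indecomposable class lies in the subring generated by $\hat G$ and $y$, proving the generation claim.

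Next I would show that $\{\lambda y^t\mid t\>0,\lambda\in\hat G\}$ spans $r(\mathcal W)$ over $\mathbb Z$. Combining the preceding two observations, for every $m\>1$ and $\lambda\in\hat G$,
\[
[V_m(\lambda)]=\sum_{i=0}^{[\frac{m-1}{2}]}(-1)^i\binom{m-1-i}{i}(\lambda\chi^i)\,y^{m-1-2i},
\]
which is a $\mathbb Z$-linear combination of elements of the proposed spanning set. Since the $[V_m(\lambda)]$ already form a $\mathbb Z$-basis of $r(\mathcal W)$, $\mathbb Z$-spanning follows.

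The main obstacle is $\mathbb Z$-linear independence. For this I would invoke \leref{4.2}, which after rewriting reads
\[
y^m=[V_{m+1}(\epsilon)]+\sum_{i=1}^{[\frac{m}{2}]}\frac{m-2i+1}{m-i+1}\binom{m}{i}[V_{m+1-2i}(\chi^i)],
\]
so $\mu y^m$, expanded in the indecomposable basis $\{[V_t(\lambda)]\}$, contains $[V_{m+1}(\mu)]$ with coefficient $1$ and otherwise only summands $[V_t(\lambda)]$ with $t<m+1$. Now suppose $\sum_{\mu,m}a_{\mu,m}\,\mu y^m=0$ is a finite $\mathbb Z$-linear relation, and let $M=\max\{m:a_{\mu,m}\neq 0\text{ for some }\mu\in\hat G\}$. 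For each fixed $\nu\in\hat G$, the coefficient of $[V_{M+1}(\nu)]$ in the expansion of the left-hand side equals $a_{\nu,M}$, because no $\lambda y^{m'}$ with $m'<M$ contributes to a basis element of dimension $M+1$, and by maximality no term with $m'>M$ appears. Hence $a_{\nu,M}=0$ for every $\nu$, contradicting the maximality of $M$. This proves linear independence and completes the proof.
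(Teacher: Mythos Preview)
Your proof is correct and follows essentially the same approach as the paper: both use $V_t(\lambda)\cong V_1(\lambda)\ot V_t(\epsilon)$ together with \leref{4.3} to obtain generation and spanning, and then invoke \leref{4.2} to see that $\lambda y^t$ equals $[V_{t+1}(\lambda)]$ plus lower terms, giving a triangular change of basis and hence $\mathbb{Z}$-linear independence. The only cosmetic difference is that you justify $V_1(\lambda)\ot V_t(\epsilon)\cong V_t(\lambda)$ directly from the module structure and the coproduct, whereas the paper simply cites \cite[Theorem 3.3]{SunChen}; your maximality argument for independence is also just a slightly more explicit version of what the paper leaves implicit.
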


\begin{proof}
By \prref{2.1}, one knows that $\{[V_t(\l)]|t\>1,\l \in \hat{G}\}$ is a $\mathbb{Z}$-basis of $r(\mathcal W)$.
For any $t\>1$ and $\l \in \hat{G}$, by \cite[Theorem 3.3]{SunChen} and \leref{4.3}, we have
$$\begin{array}{rcl}
[V_t(\l)]&=&[V_1(\l)\ot V_t(\e)]=[V_1(\l)][V_t(\e)]\\
&=&\l\sum_{i=0}^{[\frac{t-1}{2}]}(-1)^{i}\binom{t-1-i}{i}\chi^iy^{t-1-2i}.\\
\end{array}$$
This shows that $r(\mathcal W)$ is generated, as a ring, by $\hat G$ and $y$.
Hence as a $\mathbb Z$-module, $r(\mathcal W)$ is generated by $\{\l y^t|t\>0, \l\in\hat{G}\}$.
Then by \cite[Theorem 3.3]{SunChen} and \leref{4.2}, we have	
$$\begin{array}{rcl}
V_1(\l)\ot V_2(\epsilon)^{\otimes t}&\cong&\oplus_{i=0}^{[\frac{t}{2}]}\frac{t-2i+1}{t-i+1}\binom{t}{i}V_1(\l)\ot V_{t+1-2i}(\chi^i)\\
&\cong&V_{t+1}(\l)\oplus(\oplus_{1\<i\<[\frac{t}{2}]}\frac{t-2i+1}{t-i+1}\binom{t}{i}V_{t+1-2i}(\chi^i\l)),\\
\end{array}$$
and so $\l y^t=[V_{t+1}(\l)]+\sum_{1\<i\<[\frac{t}{2}]}\frac{t-2i+1}{t-i+1}\binom{t}{i}[V_{t+1-2i}(\chi^i\l)]$
in $r(\mathcal W)$, where $\l\in\hat{G}$ and $t\>0$.
It follows that the set $\{{\l}y^t|t\>0,\l\in \hat G\}$ is linearly independent over $\mathbb{Z}$.
Therefore, $\{{\l}y^t|t\>0,\l\in \hat G\}$ is $\mathbb{Z}$-basis of $r(\mathcal W)$.
\end{proof}

\begin{corollary}\colabel{4.5}
$r(\mathcal W)$ is isomorphic to the polynomial algebra $\mathbb{Z}\hat{G}[y]$ over the group ring $\mathbb{Z}\hat{G}$
in one variable $y$.
\end{corollary}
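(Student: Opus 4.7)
The plan is to exhibit an explicit ring isomorphism $\Phi\colon \mathbb{Z}\hat{G}[y]\to r(\mathcal W)$ and then argue it is bijective by comparing $\mathbb{Z}$-bases. Since $|\chi|=|q|=\infty$, \prref{2.1} together with \cite[Theorem 3.3]{SunChen} shows that $V_n(\l)\ot V_t(\s)\cong V_t(\s)\ot V_n(\l)$ for all $n,t\>1$ and $\l,\s\in\hat{G}$, so $r(\mathcal W)$ is commutative. In particular, the image of $y=[V_2(\e)]$ commutes with every element of the subring $\mathbb{Z}\hat{G}\ss r(\mathcal W)$ from \leref{4.1}.

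By the universal property of the polynomial ring, the inclusion $\mathbb{Z}\hat{G}\hookrightarrow r(\mathcal W)$ together with the choice of the element $y=[V_2(\e)]\in r(\mathcal W)$ extends uniquely to a ring homomorphism
$$\Phi\colon \mathbb{Z}\hat{G}[y]\lra r(\mathcal W),\quad \l\longmapsto \l\ (\l\in\hat{G}),\quad y\longmapsto y.$$
I would first note that $\Phi$ is surjective: by \coref{4.4}, $r(\mathcal W)$ is generated as a ring by $\hat{G}$ and $y$, and both are clearly in the image of $\Phi$.

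For injectivity, the standard $\mathbb{Z}$-basis of the polynomial algebra $\mathbb{Z}\hat{G}[y]$ is $\{\l y^t\mid t\>0,\ \l\in\hat{G}\}$, and $\Phi$ sends this basis to the set $\{\l y^t\mid t\>0,\ \l\in\hat{G}\}\ss r(\mathcal W)$. By the second assertion of \coref{4.4}, this set is a $\mathbb{Z}$-basis of $r(\mathcal W)$. Hence $\Phi$ carries a $\mathbb{Z}$-basis bijectively onto a $\mathbb{Z}$-basis, and therefore is a ring isomorphism.

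The only subtlety worth highlighting is the need to invoke commutativity of $r(\mathcal W)$ so that the universal property of the polynomial ring applies (otherwise the assignment $y\mapsto y$ need not extend to a ring map from $\mathbb{Z}\hat{G}[y]$). Everything else is an immediate bookkeeping consequence of \coref{4.4}, and there is no computational obstacle once that corollary is in hand.
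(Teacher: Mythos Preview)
Your proof is correct and follows exactly the paper's approach: the paper's one-line proof simply cites \leref{4.1} and \coref{4.4}, and you have spelled out the standard argument implicit in that citation (define $\Phi$ via the universal property using commutativity, then conclude bijectivity from the $\mathbb{Z}$-basis comparison).
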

\begin{proof}
It follows from \leref{4.1} and \coref{4.4}.
\end{proof}

\subsection{The case of $|q|<|\chi|=\infty$}\selabel{4.2}
In this subsection, we consider the case that $|\chi|=\infty$ and $q$ is a root of unity.
Throughout this subsection, assume $|\chi|=\infty$ and $|q|=s<\infty$.
By \prref{2.1} and \prref{3.8}, $r(\mathcal W)$ is a commutative ring.
Let $y=[V_2(\epsilon)]$ and $z=[V_{s+1}(\epsilon)]$ in $r(\mathcal W)$.
Note that \cite[Lemmas 3.8-3.11]{SunChen} hold in this case.

\begin{proposition}\prlabel{4.6}
$r(\mathcal W)$ is generated, as a ring, by $\hat{G}\cup\{y,z\} $.
\end{proposition}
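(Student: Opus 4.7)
The plan is to show by strong induction on $t\ge 1$ that $[V_t(\epsilon)]$ lies in the subring $R\subseteq r(\mathcal W)$ generated by $\hat G\cup\{y,z\}$. Once this is known, the isomorphism $V_t(\lambda)\cong V_1(\lambda)\ot V_t(\epsilon)$ (used in the proof of \coref{4.4}, and valid here by the same direct check on bases) gives $[V_t(\lambda)]=\lambda[V_t(\epsilon)]\in R$ for every $\lambda\in\hat G$ and $t\ge 1$, and \prref{2.1} then yields $R=r(\mathcal W)$.

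For the base cases $1\le t\le s+1$, when $1\le t\le s$ the hypothesis $t-1<s=|q|$ permits the use of \leref{4.2}, which supplies the upper-triangular identity
$$y^{t-1}=[V_t(\epsilon)]+\sum_{i=1}^{\lfloor(t-1)/2\rfloor}\tfrac{t-2i}{t-i}\tbinom{t-1}{i}\chi^i[V_{t-2i}(\epsilon)].$$
Inducting on $t\le s$ through this relation, exactly as in the proof of \coref{4.4}, expresses $[V_t(\epsilon)]$ as a polynomial in $y$ with coefficients in $\mathbb Z\hat G$, and hence inside $R$. The remaining base case $t=s+1$ is immediate from $[V_{s+1}(\epsilon)]=z\in R$.

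For the inductive step, let $t\ge s+2$ and assume $[V_{t'}(\epsilon)]\in R$ for every $t'<t$. The decisive move is to expand $z\cdot[V_{t-s}(\epsilon)]$ by \leref{3.5} applied to $V_{s+1}(\epsilon)\ot V_{t-s}(\epsilon)$. Writing $t-s=rs+l$ with $0\le l\le s-1$, exactly one of the three formulas in \leref{3.5} applies: part (1) when $l=0$; part (2)(a) when $r=0$ (so $s+2\le t\le 2s-1$); and part (2)(b) otherwise. In each case the identity takes the form
$$z\cdot[V_{t-s}(\epsilon)]=[V_t(\epsilon)]+\Phi,$$
where $\Phi$ is a $\mathbb Z\hat G$-linear combination of classes $[V_{t'}(\epsilon)]$ with indices $t'\in\{s,\,t-s,\,t-2s,\,t-l,\,t-2l,\,t-s-l\}$. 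A direct inequality check using the ranges of $r,l$ and the convention $V_0=0$ confirms $0\le t'<t$ in every case, so solving for $[V_t(\epsilon)]$ and invoking the inductive hypothesis places it in $R$.

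The only genuine care required in this plan is the indexing bookkeeping in the inductive step: for each of the three cases of \leref{3.5} one must confirm that no summand on the right-hand side carries module-index equal to $t$, for otherwise the relation degenerates to a tautology. Once this is verified case by case, the argument is purely formal. Note that the generator $z$ is genuinely necessary beyond $\hat G$ and $y$: \leref{4.2} produces polynomial expressions for $[V_t(\epsilon)]$ in $y$ only when $t\le s$, so $[V_{s+1}(\epsilon)]$ cannot otherwise be reached.
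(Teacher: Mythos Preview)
Your plan is correct and follows essentially the same route as the paper: strong induction on $t$, with the base cases $1\le t\le s$ handled via the Clebsch--Gordan relations for $V_2(\epsilon)$ (the paper uses the recursion $[V_{t+1}]=y[V_t]-\chi[V_{t-1}]$ from \cite{SunChen} directly rather than inverting \leref{4.2}, but this is equivalent), the case $t=s+1$ by definition of $z$, and the inductive step via \leref{3.5} applied to $V_{s+1}(\epsilon)\otimes V_{t-s}$. The only cosmetic differences are that the paper carries a general $\lambda$ throughout rather than reducing to $\lambda=\epsilon$ first, and that it treats the range $s+1\le t\le 2s$ via the same two-term recursion before switching to \leref{3.5} for $t>2s$, whereas you invoke \leref{3.5} already from $t=s+2$; both organizations work.
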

\begin{proof}
Let $R$ be the subring of $r(\mathcal W)$ generated by $\hat{G}\cup\{y,z\} $. By \prref{2.1}, It is enough to show
$[V_t(\l)]\in R$ for all $t\>1$ and $\l \in \hat G$. We work by induction on $t$. Obviously, $[V_1(\l)]=\l\in R$.
By \cite[Lemma 3.8]{SunChen}, $[V_2(\l)]=[V_1(\l)\ot V_2(\e)]={\l}y\in R$. Now Let $2\<t\<s-1$
and assume $[V_m(\l)]\in R$ for any $1\<m\<t$ and $\l\in\hat{G}$.
Then by \cite[Lemma 3.8 and Lemma 3.10]{SunChen},
$[V_{t+1}(\l)]=y[V_t(\l)]-\chi[V_{t-1}(\l)]\in R$. This shows that $[V_t(\l)]\in  R$ for all $1\<t\<s$ and $\l\in \hat G$.
Since $[V_{s+1}(\epsilon)]=z\in R$,  it follows from \cite[Lemma 3.8]{SunChen} that
$[V_{s+1}(\l)]=\l z\in R$ for $\l\in \hat G$. Then a similar argument as above shows that $[V_t(\l)]\in R$ for $s+1\<t\<2s$
and $\l\in \hat G$. Thus, we have proven that $[V_t(\l)]\in  R$ for all $1\<t\<2s$ and $\l\in \hat G$.
Now let $t>2s$ and assume $[V_{m}(\l)]\in R$ for all $1\<m<t$ and  $\l\in \hat G$.
If $s|t$, then by \leref{3.5}(1), we have
$[V_{t}(\l)]=z[V_{t-s}(\l)]-[V_{t-2s}(\chi^s\l)]-\sum\limits_{i=1}^{s-1}[V_{t-s}(\chi^i\l)]\in R$.
If $s\nmid t$, then $t-s=rs+l$ with $r\>1$ and $1\<l\<s-1$. In this case, by \leref{3.5}(2), we have
$$\begin{array}{rcl}
V_{s+1}(\e)\ot V_{t-s}(\l)
&\cong&V_{t}(\l)\oplus(\oplus_{1\<i\<l-1}V_{(r+1)s}(\chi^i\l))\oplus V_{t-2l}(\chi^l\l)\\
&&\oplus(\oplus_{l+1\<i\<s-1}V_{rs}(\chi^i\l))\oplus V_{t-2s}(\chi^{s}\l),\\
\end{array}$$
and so
$$\begin{array}{rcl}
[V_{t}(\l)]
&=&z[V_{t-s}(\l)]-\sum_{1\<i\<l-1}[V_{(r+1)s}(\chi^i\l)]-[V_{t-2l}(\chi^l\l)]\\
&&-\sum_{l+1\<i\<s-1}[V_{rs}(\chi^i\l)]-[V_{t-2s}(\chi^{s}\l)]\in R.\\
\end{array}$$
This completes the proof.
\end{proof}

\begin{lemma}\lelabel{4.7}
In $r(\mathcal W)$, we have\\
{\rm (1)} $[V_m(\e)]=\sum_{i=0}^{[\frac{m-1}{2}]}(-1)^{i}\binom{m-1-i}{i}\chi^iy^{m-1-2i}$,  $1\<m\<s$;\\
{\rm (2)} $[V_{s+m}(\epsilon)]=\sum_{i=0}^{[\frac{m-1}{2}]}(-1)^{i}\binom{m-1-i}{i}\chi^iy^{m-1-2i}z-
	\sum_{i=0}^{[\frac{m-2}{2}]}(-1)^{i}\binom{m-2-i}{i}\chi^{i+1}y^{m-2-2i}[V_{s}(\epsilon)]$, $2\<m\<s$;\\
{\rm (3)} $[V_{ms}(\epsilon)]=\sum_{i=0}^{[\frac{m-2}{2}]}(-1)^{i}\binom{m-2-i}{i}\chi^{si}(z-\sum_{j=1}^{s-1}\chi^j)^{m-2-2i}[V_{2s}(\epsilon)]
	-\sum_{i=0}^{[\frac{m-3}{2}]}(-1)^{i}\binom{m-3-i}{i}\chi^{s(i+1)}(z-\sum_{j=1}^{s-1}\chi^j)^{m-3-2i}[V_{s}(\epsilon)]$,  $m\>3$.
\end{lemma}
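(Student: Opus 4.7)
My plan is to prove all three parts by induction on $m$, in each case translating a tensor product decomposition supplied by the earlier results of this excerpt into a three-term recursion in $r(\mathcal W)$, and then verifying that the stated closed formula satisfies that recursion together with the correct initial conditions. All three formulas have the shape of a Chebyshev-type expansion, so the combinatorial identity doing the work is Pascal's relation $\binom{k-i}{i}+\binom{k-i}{i-1}=\binom{k+1-i}{i}$.

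For (1), the argument is identical to that of \leref{4.3}. An application of \prref{3.8} with $n=m-1$, $t=2$, $r=r'=0$ (case (1) when $2\<m\<s-1$ and case (2) when $m=s$) yields $V_2(\e)\ot V_{m-1}(\e)\cong V_m(\e)\op V_{m-2}(\chi)$ for every $2\<m\<s$, giving the recursion $[V_m(\e)]=y[V_{m-1}(\e)]-\chi[V_{m-2}(\e)]$ with the convention $[V_0(\e)]=0$. Induction on $m$ together with Pascal's identity then reproduces the claimed formula exactly as in \leref{4.3}.

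For (2), the strategy is to express $[V_{s+m}(\e)]$ through $z\cdot[V_m(\e)]$. By \leref{3.5}(2)(a) when $2\<m\<s-1$, and by \leref{3.5}(1) when $m=s$, one obtains the uniform decomposition
$$V_{s+1}(\e)\ot V_m(\e)\cong V_{s+m}(\e)\op(\op_{i=1}^{m-1}V_s(\chi^i)),$$
which translates to $z[V_m(\e)]=[V_{s+m}(\e)]+\sum_{i=1}^{m-1}\chi^i[V_s(\e)]$. A separate application of \prref{3.8} to $V_s(\e)\ot V_{m-1}(\e)$ yields $V_s(\e)\ot V_{m-1}(\e)\cong\op_{j=0}^{m-2}V_s(\chi^j)$, hence $\chi[V_{m-1}(\e)][V_s(\e)]=\sum_{i=1}^{m-1}\chi^i[V_s(\e)]$. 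Combining the two identities gives
$$[V_{s+m}(\e)]=z[V_m(\e)]-\chi[V_{m-1}(\e)][V_s(\e)],$$
and substituting the formulas for $[V_m(\e)]$ and $[V_{m-1}(\e)]$ from part (1) produces precisely the stated expression.

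For (3), set $w=z-\sum_{j=1}^{s-1}\chi^j$ and apply \leref{3.5}(1) to $V_{s+1}(\e)\ot V_{ms}(\e)$, obtaining
$$V_{s+1}(\e)\ot V_{ms}(\e)\cong V_{(m+1)s}(\e)\op V_{(m-1)s}(\chi^s)\op(\op_{i=1}^{s-1}V_{ms}(\chi^i)),$$
which rearranges to the clean three-term recursion
$$[V_{(m+1)s}(\e)]=w\cdot[V_{ms}(\e)]-\chi^s[V_{(m-1)s}(\e)],\qquad m\>1,$$
with initial data $[V_0(\e)]=0$. Introducing the Chebyshev-type polynomials $P_k(w)=\sum_{i=0}^{[k/2]}(-1)^i\binom{k-i}{i}\chi^{si}w^{k-2i}$, which by Pascal's identity satisfy $P_0=1$, $P_1=w$, and $P_{k+1}=wP_k-\chi^sP_{k-1}$, the stated formula reads $[V_{ms}(\e)]=P_{m-2}(w)[V_{2s}(\e)]-\chi^sP_{m-3}(w)[V_s(\e)]$ and follows by induction on $m\>3$, the base case $m=3$ being immediate from the recursion. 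The principal obstacle throughout is bookkeeping: handling the initial cases of (1) at the boundary $m=s$ where \prref{3.8} switches between its two subcases, and ensuring that the index shifts in the various binomial sums line up correctly when Pascal's identity is applied.
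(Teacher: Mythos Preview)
Your proof is correct. Parts (1) and (3) follow the paper's approach essentially verbatim: both establish the same three-term recursions (from $V_2(\e)\ot V_{m-1}(\e)$ and $V_{s+1}(\e)\ot V_{ms}(\e)$ respectively) and solve them by induction, the only cosmetic difference being that you invoke \prref{3.8} where the paper cites the equivalent \cite[Lemma~3.10(1)]{SunChen}.

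For part (2), however, your argument takes a genuinely different route. The paper merely says that (2) ``can be shown similarly to \leref{4.3} by induction on $m$ and using \cite[Lemma~3.10(1)]{SunChen}'', i.e.\ one runs the $y$-recursion $[V_{s+m}(\e)]=y[V_{s+m-1}(\e)]-\chi[V_{s+m-2}(\e)]$ through the range $s+2\<s+m\<2s$ and verifies the formula step by step. You instead multiply $[V_m(\e)]$ by $z$ via \leref{3.5} and recognise the correction term $\sum_{i=1}^{m-1}\chi^i[V_s(\e)]$ as $\chi[V_{m-1}(\e)][V_s(\e)]$, using the additional decomposition $V_s(\e)\ot V_{m-1}(\e)\cong\oplus_{j=0}^{m-2}V_s(\chi^j)$ from \prref{3.8}; the stated formula then drops out by direct substitution of part (1) with no induction at all. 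This is shorter and makes the shape of the formula in (2) --- $z$ times the expression (1) for $m$, minus $\chi[V_s(\e)]$ times the expression (1) for $m-1$ --- completely transparent, whereas the paper's inductive approach hides why those two pieces appear. The only cost is the extra decomposition of $V_s(\e)\ot V_{m-1}(\e)$, which the paper's approach does not need.
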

\begin{proof}
(1) and (2) can be shown similarly to \leref{4.3} by induction on $m$ and using \cite[Lemma 3.10(1)]{SunChen}.
We prove (3) by induction on $m$. At first, by \leref{3.5}(1) and \cite[Lemma 3.8]{SunChen},
for any integer $k\>1$, we have
$$\begin{array}{rl}
[V_{(k+1)s}(\epsilon)]=&z[V_{ks}(\epsilon)]-\chi^{s}[V_{(k-1)s}(\epsilon)]-\sum_{i=1}^{s-1}\chi^i[V_{ks}(\epsilon)]\\
	=&(z-\sum_{j=1}^{s-1}\chi^j)[V_{ks}(\epsilon)]-\chi^s[V_{(k-1)s}(\epsilon)].
\end{array}$$
Putting $k=2$, one gets (3) for $m=3$. Then putting $k=3$, one gets (3) for $m=4$.
Now let $m\>4$. Then by the equation above (putting $k=m$) and the induction hypothesis, we have
$$\begin{array}{rl}
[V_{(m+1)s}(\epsilon)]=&(z-\sum_{j=1}^{s-1}\chi^j)[V_{ms}(\epsilon)]-\chi^s[V_{(m-1)s}(\epsilon)]\\
=&\sum_{i=0}^{[\frac{m-2}{2}]}(-1)^{i}\binom{m-2-i}{i}\chi^{si}(z-\sum_{j=1}^{s-1}\chi^j)^{m-1-2i}[V_{2s}(\e)]\\
&-\sum_{i=0}^{[\frac{m-3}{2}]}(-1)^{i}\binom{m-3-i}{i}\chi^{s(i+1)}(z-\sum_{j=1}^{s-1}\chi^j)^{m-2-2i}[V_{s}(\e)]\\
&-\sum_{i=0}^{[\frac{m-3}{2}]}(-1)^{i}\binom{m-3-i}{i}\chi^{s(i+1)}(z-\sum_{j=1}^{s-1}\chi^j)^{m-3-2i}[V_{2s}(\e)]\\
&+\sum_{i=0}^{[\frac{m-4}{2}]}(-1)^{i}\binom{m-4-i}{i}\chi^{s(i+2)}(z-\sum_{j=1}^{s-1}\chi^j)^{m-4-2i}[V_{s}(\e)]\\
=&\sum_{i=0}^{[\frac{m-2}{2}]}(-1)^{i}\binom{m-2-i}{i}\chi^{si}(z-\sum_{j=1}^{s-1}\chi^j)^{m-1-2i}[V_{2s}(\e)]\\
&+\sum_{i=1}^{[\frac{m-1}{2}]}(-1)^{i}\binom{m-2-i}{i-1}\chi^{si}(z-\sum_{j=1}^{s-1}\chi^j)^{m-1-2i}[V_{2s}(\e)]\\
&-\sum_{i=0}^{[\frac{m-3}{2}]}(-1)^{i}\binom{m-3-i}{i}\chi^{s(i+1)}(z-\sum_{j=1}^{s-1}\chi^j)^{m-2-2i}[V_{s}(\e)]\\
&-\sum_{i=1}^{[\frac{m-2}{2}]}(-1)^{i}\binom{m-3-i}{i-1}\chi^{s(i+1)}(z-\sum_{j=1}^{s-1}\chi^j)^{m-2-2i}[V_{s}(\e)]\\
=&\sum_{i=0}^{[\frac{m-1}{2}]}(-1)^{i}\binom{m-1-i}{i}\chi^{si}(z-\sum_{j=1}^{s-1}\chi^j)^{m-1-2i}[V_{2s}(\e)]\\
&-\sum_{i=0}^{[\frac{m-2}{2}]}(-1)^{i}\binom{m-2-i}{i}\chi^{s(i+1)}(z-\sum_{j=1}^{s-1}\chi^j)^{m-2-2i}[V_{s}(\e)].\\
\end{array}$$
\end{proof}

\begin{lemma}\lelabel{4.8}
In $r(\mathcal W)$, we have
$$\begin{array}{rl}
y^{s}=&(1+\chi)\sum_{i=0}^{[\frac{s-1}{2}]}(-1)^i\binom{s-1-i}{i}\chi^iy^{s-1-2i}\\
&+\sum_{1\<i\<[\frac{s-1}{2}]}\sum_{j=0}^{[\frac{s-2i-1}{2}]}(-1)^j\frac{s-2i}{s-i}
\binom{s-1}{i}\binom{s-2i-1-j}{j}\chi^{i+j}y^{s-2i-2j}.\\
\end{array}$$
\end{lemma}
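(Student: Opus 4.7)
The plan is to compute $y^s=y\cdot y^{s-1}$ by first invoking \leref{4.2} at its largest admissible exponent $m=s-1<|q|=s$, and then tensoring with one further copy of $V_2(\epsilon)$. Since \leref{4.2} gives
$$y^{s-1}=\sum_{i=0}^{[\frac{s-1}{2}]}\frac{s-2i}{s-i}\binom{s-1}{i}[V_{s-2i}(\chi^i)],$$
multiplication by $y=[V_2(\epsilon)]$ yields
$$y^s=\sum_{i=0}^{[\frac{s-1}{2}]}\frac{s-2i}{s-i}\binom{s-1}{i}\,y\cdot[V_{s-2i}(\chi^i)].$$

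The crux of the argument is that the $i=0$ summand must be handled separately. For $i\>1$ one has $s-2i\<s-2<s$, so the standard two-term rule $V_2(\epsilon)\ot V_t(\l)\cong V_{t+1}(\l)\oplus V_{t-1}(\chi\l)$ from \cite[Lemma 3.10(1)]{SunChen} (already invoked in the proof of \prref{4.6}) applies and gives $y\cdot[V_{s-2i}(\chi^i)]=\chi^i\cdot y\cdot[V_{s-2i}(\epsilon)]$. For $i=0$, however, the two-term rule fails at $t=s$, and I will instead apply \prref{3.8} with $n=s$ and $t=2$ (so $r=r'=1$ and $l=l'=0$, placing us in case (1) with $l\>l'$). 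This specialization produces $V_s(\epsilon)\ot V_2(\epsilon)\cong V_s(\epsilon)\oplus V_s(\chi)$, equivalently $y\cdot[V_s(\epsilon)]=(1+\chi)[V_s(\epsilon)]$ in $r(\mathcal W)$. Combining the two cases gives
$$y^s=(1+\chi)[V_s(\epsilon)]+\sum_{i=1}^{[\frac{s-1}{2}]}\frac{s-2i}{s-i}\binom{s-1}{i}\chi^i\cdot y\cdot[V_{s-2i}(\epsilon)].$$

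The remaining step is purely formal. \leref{4.7}(1) expresses $[V_s(\epsilon)]$ and $[V_{s-2i}(\epsilon)]$ as polynomials in $y$ and $\chi$; multiplying the expression for $[V_{s-2i}(\epsilon)]$ by $y$ yields $\sum_{j=0}^{[\frac{s-2i-1}{2}]}(-1)^j\binom{s-2i-1-j}{j}\chi^jy^{s-2i-2j}$, and merging $\chi^i\cdot\chi^j=\chi^{i+j}$ reproduces exactly the double sum in the statement, while $(1+\chi)[V_s(\epsilon)]$ gives the first sum. The only nontrivial input is the identification $V_2(\epsilon)\ot V_s(\epsilon)\cong V_s(\epsilon)\oplus V_s(\chi)$ at the critical index $t=s$ --- precisely the place where the uniform recursion driving \leref{4.2} breaks down; once this is granted, the rest is routine bookkeeping with binomial coefficients.
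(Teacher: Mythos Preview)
Your approach is essentially the same as the paper's: invoke \leref{4.2} at $m=s-1$, tensor once more with $V_2(\epsilon)$, treat the top summand $V_s(\epsilon)$ separately, and then substitute the polynomial expressions from \leref{4.7}(1). The paper simply cites \cite[Lemma 3.10]{SunChen} (which has a part covering $V_2(\epsilon)\otimes V_s(\l)\cong V_s(\l)\oplus V_s(\chi\l)$ directly) for the critical step, whereas you route through \prref{3.8}.

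There is one bookkeeping slip in your invocation of \prref{3.8}: with $n=s$ and $t=2$ one has $r'=1$, $l'=0$, but $t=2$ gives $r=0$, $l=2$ whenever $s\geqslant 3$ (your values $r=1$, $l=0$ are only correct for $s=2$). For $s\geqslant 3$ you are still in case~(1) with $l\geqslant l'$, and the only nonempty piece in the four-fold sum is the second one ($i=0$, $j\in\{0,1\}$), giving $V_s(\epsilon)\oplus V_s(\chi)$; so the conclusion is unaffected, but the parameter identification as written is incorrect. Everything else checks out.
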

\begin{proof}
By \leref{4.2}, we have
$V_2(\epsilon)^{\otimes (s-1)}\cong \oplus_{i=0}^{[\frac{s-1}{2}]}\frac{s-2i}{s-i}\binom{s-1}{i}V_{s-2i}(\chi^i)$.
Then by \cite[Lemma 3.10]{SunChen}, one gets that
$$\begin{array}{c}
V_2(\epsilon)^{\ot s}=V_s(\epsilon)\oplus V_s(\chi)
\oplus(\oplus_{1\<i\<[\frac{s-1}{2}]}\frac{s-2i}{s-i}\binom{s-1}{i}(V_{s-2i+1}(\chi^i)\oplus V_{s-2i-1}(\chi^{i+1}))).
\end{array}$$
Thus, it follow from \leref{4.7} that
$$\begin{array}{rl}
y^{s}=&(1+\chi)\sum_{i=0}^{[\frac{s-1}{2}]}(-1)^i\binom{s-1-i}{i}\chi^iy^{s-1-2i}\\
&+\sum\limits_{1\<i\<[\frac{s-1}{2}]}\sum_{j=0}^{[\frac{s-2i-1}{2}]}(-1)^j\frac{s-2i}{s-i}\binom{s-1}{i}\binom{s-2i-1-j}{j}\chi^{i+j}y^{s-2i-2j}.
\end{array}$$
\end{proof}

For any $m\>0$, define a set $P_m$ of $H$-modules by $P_0=\{0\}$ and $P_m=\{\oplus_{i=1}^{ms}n_iV_i(\l_i)|\l_i\in\hat{G}, n_i\>0, 1\<i\<ms\}$
if $m\>1$.

\begin{lemma}\lelabel{4.9}
$V_{s+1}(\e)^{\ot m}\cong V_{ms+1}(\epsilon)\oplus E_m$ for some $E_m\in P_m$, $m\>0$.
\end{lemma}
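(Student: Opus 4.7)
The plan is a straightforward induction on $m$. The base cases $m=0$ (where $V_{s+1}(\e)^{\ot 0}=V_1(\e)$ and $E_0=0\in P_0$) and $m=1$ (where $E_1=0\in P_1$) are immediate from the definition. For the inductive step, assuming $V_{s+1}(\e)^{\ot m}\cong V_{ms+1}(\e)\oplus E_m$ with $E_m\in P_m$, I split
$$V_{s+1}(\e)^{\ot(m+1)}\cong\bigl(V_{s+1}(\e)\ot V_{ms+1}(\e)\bigr)\oplus\bigl(V_{s+1}(\e)\ot E_m\bigr)$$
and handle each piece using \leref{3.5}.

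For the first piece, I apply \leref{3.5}(2)(b) with $t=ms+1$, $r=m\>1$, $l=1$, which gives
$$V_{s+1}(\e)\ot V_{ms+1}(\e)\cong V_{(m+1)s+1}(\e)\oplus V_{(m+1)s-1}(\chi)\oplus\bigl(\oplus_{2\<i\<s-1}V_{ms}(\chi^i)\bigr)\oplus V_{(m-1)s+1}(\chi^s),$$
since the $\oplus_{1\<i\<l-1}$ term is empty when $l=1$. All summands other than $V_{(m+1)s+1}(\e)$ have dimension at most $(m+1)s$, so they belong to $P_{m+1}$.

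For the second piece, I use that every summand of $E_m$ is of the form $V_i(\l_i)$ with $1\<i\<ms$. Inspecting \leref{3.5}, every indecomposable summand appearing in $V_{s+1}(\e)\ot V_i(\l_i)$ has dimension at most $i+s\<ms+s=(m+1)s$; hence $V_{s+1}(\e)\ot E_m\in P_{m+1}$. Adding the two contributions produces the desired decomposition $V_{s+1}(\e)^{\ot(m+1)}\cong V_{(m+1)s+1}(\e)\oplus E_{m+1}$ with $E_{m+1}\in P_{m+1}$.

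There is no real obstacle here; the only point requiring a little attention is verifying that in each of the cases of \leref{3.5} the dimension bound $i+s$ is genuinely attained only by one summand so that a ``leading'' $V_{(m+1)s+1}(\e)$ is not accidentally produced by $V_{s+1}(\e)\ot E_m$. Since every summand of $E_m$ has dimension strictly less than $ms+1$, the corresponding tensor product yields summands of dimension strictly less than $(m+1)s+1$, and so the only copy of $V_{(m+1)s+1}(\e)$ in $V_{s+1}(\e)^{\ot(m+1)}$ comes from $V_{s+1}(\e)\ot V_{ms+1}(\e)$, as required.
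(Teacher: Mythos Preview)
Your proof is correct and follows essentially the same route as the paper's: induction on $m$, applying \leref{3.5}(2)(b) with $l=1$ to decompose $V_{s+1}(\e)\ot V_{ms+1}(\e)$, and using \leref{3.5} to bound the dimensions of summands in $V_{s+1}(\e)\ot E_m$. The paper's write-up is terser, but the argument is identical.
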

\begin{proof}
We prove the lemma by induction on $m$.
For $m=0, 1$, it is obvious.
Now let $m\>1$ and assume $V_{s+1}(\e)^{\ot m}\cong V_{ms+1}(\e)\oplus E_m$ for some $E_m\in P_m$.
Then by \leref{3.5}(2), one gets that
$$\begin{array}{rl}
V_{s+1}(\e)^{\ot(m+1)}\cong &V_{s+1}(\e)\ot V_{ms+1}(\e)\oplus V_{s+1}(\e)\ot E_m\\
\cong&V_{(m+1)s+1}(\e)\oplus V_{(m+1)s-1}(\chi)\oplus(\oplus_{2\<i\<s-1}V_{ms}(\chi^i))\\
&\oplus V_{(m-1)s+1}(\chi^{s})\oplus V_{s+1}(\e)\ot E_m.\\
\end{array}$$
Obviously, $V_{(m+1)s-1}(\chi)\oplus(\oplus_{2\<i\<s-1}V_{ms}(\chi^i))\oplus V_{(m-1)s+1}(\chi^{s})\in P_{m+1}$.
By \leref{3.5}, one can see that $V_{s+1}(\e)\ot E_m\in P_{m+1}$. This completes the proof.
\end{proof}

\begin{proposition}\prlabel{4.10}
$\{\l y^tz^m|0\<t\<s-1, m\>0, \l\in\hat{G}\}$ is a $\mathbb{Z}$-basis of $r(\mathcal W)$.
\end{proposition}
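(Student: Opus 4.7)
My plan is to prove Proposition 4.10 in two stages: first show that the proposed set spans $r(\mathcal W)$ over $\mathbb{Z}$ via the ring-generation result of Proposition 4.6 together with the relation of Lemma 4.8, and then obtain linear independence from a leading-term (triangularity) comparison against the basis $\{[V_n(\l)]\mid n\>1,\,\l\in\hat{G}\}$ of Proposition 2.1. For spanning, Proposition 4.6 shows that $r(\mathcal W)$ is generated as a ring by $\hat{G}\cup\{y,z\}$, so as a $\mathbb{Z}$-module it is spanned by monomials $\l y^a z^m$; Lemma 4.8 rewrites $y^s$ as a $\mathbb{Z}\hat{G}$-linear combination of $\{1,y,\ldots,y^{s-1}\}$, and a descending induction on the $y$-degree (using $y^{a+1}=y\cdot y^a$) reduces every $\l y^a z^m$ to a $\mathbb{Z}$-linear combination of the proposed basis elements.

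The heart of the argument is the triangularity claim
\[
\mu y^t z^m=[V_{ms+t+1}(\mu)]+\sum_{n'<ms+t+1}\sum_{\l\in\hat{G}}c_{n',\l}\,[V_{n'}(\l)],\qquad c_{n',\l}\in\mathbb{Z},
\]
which I would prove by induction on $N:=ms+t+1$, with base case $N=1$ being $\mu=[V_1(\mu)]$. When $t=0$ and $m\>1$, Lemma 4.9 gives $z^m=[V_{ms+1}(\e)]+[E_m]$ with $E_m\in P_m$, so tensoring with $V_1(\mu)$ yields the claim at once since every indecomposable summand of $E_m$ has dimension at most $ms<N$. When $1\<t\<s-1$, I write $\mu y^tz^m=y\cdot(\mu y^{t-1}z^m)$ and apply the inductive hypothesis to $\mu y^{t-1}z^m$, reducing the task to verifying that $y\cdot[V_{N-1}(\mu)]$ contains $[V_N(\mu)]$ with multiplicity one and that every other indecomposable summand has dimension strictly less than $N$; the residual $y\cdot(\text{lower})$ contributes only summands of dimension at most $N-1$. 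This verification splits on the residue of $N-1$ modulo $s$: it follows from \cite[Lemma~3.10]{SunChen} for $N-1<s$, from Lemma 3.5 when $s\mid(N-1)$, and from Proposition 3.8 in the remaining cases; in every instance $V_N(\mu)$ arises from the unique index pair $i=0,\,j=0$ of the relevant formula.

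Granted the triangularity claim, independence follows from a leading-coefficient argument. Suppose $\sum a_{\l,t,m}\l y^tz^m=0$ is a finite relation in $r(\mathcal W)$ and let $N$ be maximal with $a_{\l,t,m}\neq 0$ for some triple satisfying $ms+t+1=N$. Since $(t,m)\mapsto ms+t+1$ is a bijection from $\{0,1,\ldots,s-1\}\times\mathbb{Z}_{\>0}$ onto $\mathbb{Z}_{\>1}$, the pair $(t,m)$ is uniquely determined by $N$; by the triangularity claim the coefficient of $[V_N(\l)]$ in the relation expanded in the basis of Proposition 2.1 is exactly $a_{\l,t,m}$, and $\mathbb{Z}$-linear independence of that basis forces $a_{\l,t,m}=0$ for every $\l$, contradicting the choice of $N$. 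The principal difficulty is therefore the decomposition step for $y\cdot[V_{N-1}(\mu)]$, but in each case of Proposition 3.8 and the supporting lemmas of Section 3 the top-dimensional indecomposable summand is singled out by the extremal index pair, so the triangular form is preserved throughout the induction.
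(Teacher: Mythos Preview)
Your proposal is correct and follows the same overall architecture as the paper: spanning via \prref{4.6} and \leref{4.8}, and independence via a triangularity comparison against the basis $\{[V_n(\l)]\}$ of \prref{2.1}, with \leref{4.9} supplying the leading term $[V_{ms+1}(\e)]$ of $z^m$.

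The one genuine difference is in how you extract the leading term of $\l y^tz^m$ for $1\<t\<s-1$. The paper computes $V_2(\e)^{\ot t}\ot V_{s+1}(\e)^{\ot m}$ in one shot: it first applies \leref{4.2} (valid since $t<s=|q|$) to decompose $V_2(\e)^{\ot t}$, then tensors each summand $V_{t+1-2i}(\chi^i)$ with $V_{ms+1}(\e)$ via \cite[Lemma~3.11(2)]{SunChen}, obtaining the explicit formula $\l y^tz^m=[V_{ms+t+1}(\l)]+\sum_{1\<i\<[\frac{t}{2}]}\frac{t-2i+1}{t-i+1}\binom{t}{i}[V_{ms+t+1-2i}(\chi^i\l)]+\l[E]$ with $E\in P_m$. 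You instead peel off one factor of $y$ at a time and invoke the decomposition of $V_2(\e)\ot V_{N-1}(\mu)$, using that $N-1=ms+t$ is never divisible by $s$ so the top summand $V_N(\mu)$ really appears with multiplicity one, and that $V_2(\e)\ot V_d(\cdot)$ never produces summands of dimension exceeding $d+1$. Your route is slightly more economical in that it bypasses \leref{4.2} and \cite[Lemma~3.11(2)]{SunChen}; the paper's route gives a sharper explicit expression for the lower-order terms. Note that your parenthetical ``\leref{3.5} when $s\mid(N-1)$'' is vacuous here, since in your inductive step $N-1\equiv t\not\equiv 0\pmod s$.
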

\begin{proof}
Since $r(\mathcal W)$ is a commutative ring, it follows from \prref{4.6} and \leref{4.8} that
$r(\mathcal W)$ is generated, as a $\mathbb Z$-module, by $\{\l y^tz^m|0\<t\<s-1, m\>0, \l\in\hat{G}\}$.
It is left to show that $\{\l y^tz^m|0\<t\<s-1, m\>0, \l\in\hat{G}\}$ is linearly independent over $\mathbb{Z}$.

Let $0\<t\<s-1$ and $m\>0$. Then by \leref{4.2}, \leref{4.9} and \cite[Lemma 3.11(2)]{SunChen}, we have
$$\begin{array}{rl}
&V_{2}(\e)^{\ot t}\ot V_{s+1}(\e)^{\ot m}\\
\cong &V_{2}(\e)^{\ot t}\ot (V_{ms+1}(\e)\oplus E_m)\\
\cong &\sum_{i=0}^{[\frac{t}{2}]}\frac{t-2i+1}{t-i+1}\binom{t}{i}V_{t+1-2i}(\chi^i)\ot V_{ms+1}(\e)\oplus V_{2}(\e)^{\ot t}\ot E_m\\
\cong &\sum_{i=0}^{[\frac{t}{2}]}\frac{t-2i+1}{t-i+1}\binom{t}{i}(V_{t+1-2i+ms}(\chi^i)\oplus(\oplus_{1\<j\<t-2i}V_{ms}(\chi^{i+j})))
\oplus V_{2}(\e)^{\ot t}\ot E_m\\
\cong &V_{t+1+ms}(\e)\oplus(\oplus_{1\<i\<[\frac{t}{2}]}\frac{t-2i+1}{t-i+1}\binom{t}{i}V_{t+1-2i+ms}(\chi^i))\\
&\oplus(\oplus_{i=0}^{[\frac{t}{2}]}\oplus_{1\<j\<t-2i}\frac{t-2i+1}{t-i+1}\binom{t}{i}V_{ms}(\chi^{i+j}))\oplus V_{2}(\e)^{\ot t}\ot E_m\\
\cong &V_{ms+t+1}(\e)\oplus(\oplus_{1\<i\<[\frac{t}{2}]}\frac{t-2i+1}{t-i+1}\binom{t}{i}V_{t+1-2i+ms}(\chi^i))\oplus E,\\
\end{array}$$
where $E=(\oplus_{i=0}^{[\frac{t}{2}]}\oplus_{1\<j\<t-2i}\frac{t-2i+1}{t-i+1}\binom{t}{i}V_{ms}(\chi^{i+j}))\oplus V_{2}(\e)^{\ot t}\ot E_m\in P_m$
by \cite[Lemma 3.10]{SunChen}. Thus, we have
$$\begin{array}{c}
\l y^tz^m=[V_{ms+t+1}(\l)]+\sum\limits_{1\<i\<[\frac{t}{2}]}\frac{t-2i+1}{t-i+1}\binom{t}{i}[V_{t+1-2i+ms}(\chi^i\l)]+\l[E].
\end{array}$$
It follows that $\{\l y^tz^m|0\<t\<s-1, m\>0, \l\in\hat{G}\}$ is linearly independent over $\mathbb{Z}$.
\end{proof}

Let $\mathbb{Z}{\hat G}[y,z]$ be the polynomial algebra over the group ring ${\mathbb Z}{\hat G}$ in two variables $y$ and $z$,
and let $I$ be the ideal of $\mathbb{Z}{\hat G}[y,z]$ generated by the element
$$\begin{array}{l}y^{s}-(1+\chi)\sum_{i=0}^{[\frac{s-1}{2}]}(-1)^i\binom{s-1-i}{i}\chi^iy^{s-1-2i}\\
\hspace{0.35cm}-\sum_{1\<i\<[\frac{s-1}{2}]}\sum_{j=0}^{[\frac{s-2i-1}{2}]}(-1)^j\frac{s-2i}{s-i}\binom{s-1}{i}\binom{s-2i-1-j}{j}\chi^{i+j}y^{s-2i-2j}.
\end{array}$$

\begin{corollary}\colabel{4.11}
$r(\mathcal W)$ is isomorphic to the quotient ring $\mathbb{Z}{\hat G}[y, z]/I$.
\end{corollary}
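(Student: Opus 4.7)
The plan is to construct the isomorphism by lifting the assignment $y \mapsto y$, $z \mapsto z$, $\lambda \mapsto [V_1(\lambda)]$ to a ring homomorphism and then using the $\mathbb{Z}$-basis from \prref{4.10} to check injectivity. Since $r(\mathcal W)$ is commutative (a consequence of \prref{2.1} and \prref{3.8}, as noted at the start of this subsection), the universal property of the polynomial ring $\mathbb{Z}\hat{G}[y,z]$ produces a well-defined ring homomorphism
$$\varphi\colon \mathbb{Z}\hat{G}[y,z]\longrightarrow r(\mathcal W),\qquad y\mapsto [V_2(\e)],\ z\mapsto [V_{s+1}(\e)],\ \lambda\mapsto [V_1(\lambda)],$$
where we identify $\mathbb{Z}\hat{G}$ as a subring of $r(\mathcal W)$ via \leref{4.1}. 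By \prref{4.6}, $\varphi$ is surjective, and by \leref{4.8}, the defining generator of $I$ lies in $\ker\varphi$. Hence $\varphi$ descends to a surjective ring homomorphism
$$\bar\varphi\colon \mathbb{Z}\hat{G}[y,z]/I\longrightarrow r(\mathcal W).$$

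It remains to prove $\bar\varphi$ is injective. The strategy is to exhibit a spanning set of $\mathbb{Z}\hat{G}[y,z]/I$ whose image under $\bar\varphi$ is the $\mathbb{Z}$-basis from \prref{4.10}. Concretely, I claim that $\{\lambda y^t z^m\mid 0\<t\<s-1,\ m\>0,\ \lambda\in\hat{G}\}$ spans $\mathbb{Z}\hat{G}[y,z]/I$ over $\mathbb{Z}$. Since $\mathbb{Z}\hat{G}[y,z]$ is freely spanned over $\mathbb{Z}$ by $\{\lambda y^t z^m\mid t,m\>0,\ \lambda\in\hat{G}\}$, it suffices to reduce $y^t$ for $t\>s$ modulo $I$ to a $\mathbb{Z}\hat{G}$-linear combination of $\{y^i\mid 0\<i\<s-1\}$. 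The relation defining $I$ reads $y^s\equiv P(y,\chi)\ (\mathrm{mod}\ I)$ with $P(y,\chi)\in\mathbb{Z}\hat{G}[y]$ of degree at most $s-1$ in $y$, so multiplying by $y$ and iterating lets us rewrite any $y^t$ ($t\>s$) in the desired form; this is a routine descending induction on $t$.

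Granting the spanning claim, injectivity of $\bar\varphi$ follows immediately: any element of $\ker\bar\varphi$ can be written as a $\mathbb{Z}$-linear combination of representatives $\lambda y^t z^m$ with $0\<t\<s-1$ and $m\>0$, whose images in $r(\mathcal W)$ are $\mathbb{Z}$-linearly independent by \prref{4.10}. Thus the coefficients vanish, which shows $\ker\bar\varphi=0$ and completes the proof.

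The main (and only nonroutine) obstacle is verifying the reduction step: one needs to be careful that the relation $y^s=\cdots$ modulo $I$ really does allow one to express $y^t$ for every $t\>s$ in terms of $\{y^i\mid 0\<i<s\}$ with coefficients in $\mathbb{Z}\hat{G}$, and that multiplying by a power of $z$ (which does not appear in the relation) causes no interference. This is straightforward because $I$ is generated by a single polynomial that is monic of degree $s$ in $y$ with coefficients independent of $z$, so division with remainder by the generator of $I$ in the polynomial ring $\mathbb{Z}\hat{G}[z][y]$ gives the required normal form. Once this is observed, the corollary follows directly from \prref{4.6}, \leref{4.8}, and \prref{4.10}.
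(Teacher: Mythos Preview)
Your proof is correct and follows essentially the same approach as the paper: construct the surjection $\mathbb{Z}\hat G[y,z]\to r(\mathcal W)$ using commutativity and \prref{4.6}, factor through $I$ via \leref{4.8}, observe that the quotient is $\mathbb Z$-spanned by $\{\l y^tz^m\mid 0\<t\<s-1,\ m\>0,\ \l\in\hat G\}$, and then invoke \prref{4.10} to deduce injectivity. Your explicit remark that the generator of $I$ is monic of degree $s$ in $y$ with coefficients in $\mathbb Z\hat G[z]$, so that division with remainder in $\mathbb Z\hat G[z][y]$ yields the spanning set, is exactly what the paper compresses into the phrase ``by the definition of $I$.''
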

\begin{proof}
Since $r(\mathcal W)$ is commutative, it follows from \leref{4.1} and \prref{4.6}
that the ring embedding ${\mathbb Z}{\hat G}\hookrightarrow r(\mathcal W)$ can be extended into
a ring epimorphism $\phi: \mathbb{Z}{\hat G}[y,z]\ra r(\mathcal W)$ such that
$\phi(y)=[V_2(\e)]$ and $\phi(z)=[V_{s+1}(\e)]$.
By \leref{4.8}, $\phi(I)=0$. Hence $\phi$ induces a ring epimorphism $\ol{\phi}: \mathbb{Z}{\hat G}[y, z]/I\ra r(\mathcal W)$ such that
$\phi=\ol{\phi}\circ\pi$, where $\pi: \mathbb{Z}{\hat G}[y,z]\ra \mathbb{Z}{\hat G}[y,z]/I$ is the canonical epimorphism.
Let $\ol{u}=\pi(u)$ for any $u\in\mathbb{Z}{\hat G}[y,z]$. Then by the definition of $I$,
one can see that $\mathbb{Z}{\hat G}[y,z]/I$ is generated, as a $\mathbb{Z}$-module,
by $\{\ol{\l}\ol{y}^t\ol{z}^m|0\<t\<s-1,m\>0,\l\in\hat{G}\}$.
By \prref{4.10}, $\{\ol{\phi}(\ol{\l}\ol{y}^t\ol{z}^m)|0\<t\<s-1,m\>0,\l\in\hat{G}\}$ is a $\mathbb Z$-basis of $r(\mathcal W)$.
This implies that $\ol{\phi}$ is a ring monomorphism, and so it is a ring isomorphism.
\end{proof}

\subsection{The case of $|\chi|<\infty$}\selabel{4.3}
In this subsection, we consider the case of $|\chi|<\infty$.
Throughout this subsection, assume that $|\chi|=\ol{s}<\infty$.
Let $|q|=s$. Then $s>1$ and $\ol{s}=s's$ for some integer $s'\>1$.
Let $y=[V_2(\e)]$, $z=[V_{s+1}(\e)]$ and $x_{\b}=[V_1(\e,\b)]$ in $r(\mathcal W)$, where $\b\in k^{\times}$.

We first notice that when $s'=1$ (i.e. $\ol{s}=s$), Lemmas 3.1-3.3 and Proposition 3.4 coincide with
\cite[Lemmas 3.12-3.14 and Theorem 3.15]{SunChen} respectively, \prref{3.12} and
\thref{3.14} coincide with \cite[Theorem 3.6 and Theorem 3.7]{SunChen} respectively, and \leref{3.13}
coincides with \cite[Proposition 3.18]{WangYouChen}.

Let $R'$ be the $\mathbb Z$-submodule of $r(\mathcal W)$ generated by $\{[V_t(\l)]|\l\in\hat{G}, t\>1\}$.
Then by \prref{3.8}, one knows that $R'$ is a subring of $r(\mathcal W)$.
From the notice above, one can see that if we replace $r(\mathcal W)$ by $R'$ in the last subsection
then all the statements there are still valid. Hence $R'$ is generated, as a ring, by $\hat{G}\cup\{y,z\}$,
and $R'$ has a $\mathbb{Z}$-basis $\{\l y^tz^m|0\<t\<s-1, m\>0, \l\in\hat{G}\}$.
Moreover, $R'$ is isomorphic to the quotient ring $\mathbb{Z}\hat{G}[y,z]/I$, where the polynomial ring
$\mathbb{Z}\hat{G}[y,z]$ and its ideal $I$ are defined as in the last subsection.

\begin{lemma}\lelabel{4.12}
$r(\mathcal W)$ is generated as a ring by $\hat{G}\cup\{y, z, x_{\b}|\b\in k^{\times}\}$.
\end{lemma}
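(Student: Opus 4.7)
The plan is to exhibit every indecomposable weight module as a $\mathbb{Z}\hat{G}$-polynomial in the listed generators. By \prref{2.2}, the indecomposable objects of $\mathcal W$ split into the nilpotent type modules $V_t(\l)$ and the non-nilpotent type modules $V_t(\s,\b)$. The paragraph immediately preceding the lemma already tells us that the subring $R'$ spanned by $\{[V_t(\l)]\mid\l\in\hat{G},\,t\>1\}$ is generated over $\mathbb{Z}\hat{G}$ by $y$ and $z$ (the arguments of \seref{4.2} transfer verbatim). So it remains to show that each class $[V_t(\s,\b)]$ lies in the subring generated by $\hat{G}\cup\{y,z,x_\b\mid\b\in k^{\times}\}$.

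First I would handle $t=1$. Applying \prref{3.12} to $V_1(\s)\ot V_1(\e,\b)$ (here $n=1$, so $u=0$ and $r=1$) collapses both direct sums to a single term and gives $V_1(\s)\ot V_1(\e,\b)\cong V_1(\s,\b)$. Hence $[V_1(\s,\b)]=\s x_\b$, which already sits in the required subring.

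For $t\>2$ I would induct on $t$, using $V_{s+1}(\e)$ (class $z$) as the stepping operator. With $n=s+1$ (so $u=r=1$), \prref{3.12} specializes uniformly, for every $t\>1$, to
$$V_{s+1}(\e)\ot V_t(\s,\b)\cong (s-1)V_t(\s,\b)\oplus V_{t-1}(\s,\b)\oplus V_{t+1}(\s,\b),$$
under the convention $V_0(\s,\b)=0$. Rearranging yields the recurrence
$$[V_{t+1}(\s,\b)]=(z-(s-1))[V_t(\s,\b)]-[V_{t-1}(\s,\b)],$$
which, combined with the base case, inductively places every $[V_t(\s,\b)]$ in the subring generated by $\hat{G}\cup\{y,z,x_\b\mid\b\in k^{\times}\}$.

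The only delicate point is ensuring that $V_{t+1}(\s,\b)$ appears with multiplicity exactly $1$ in the decomposition of $V_{s+1}(\e)\ot V_t(\s,\b)$, so that the recurrence can be inverted over $\mathbb{Z}\hat{G}$; this is precisely why $z=[V_{s+1}(\e)]$ (with $r=1$) is the correct stepping operator rather than, say, $[V_{2s}(\e)]$, whose multiplicity factor of $s$ in \coref{3.11} would obstruct the inversion. Once this observation is in place, the lemma follows by a direct unpacking of \prref{3.12} and the classification \prref{2.2}.
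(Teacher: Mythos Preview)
Your proof is correct and follows essentially the same argument as the paper's own proof: both use $V_1(\s)\ot V_1(\e,\b)\cong V_1(\s,\b)$ for the base case and the recurrence $[V_{t+1}(\s,\b)]=(z-s+1)[V_t(\s,\b)]-[V_{t-1}(\s,\b)]$ extracted from \prref{3.12} with $n=s+1$ for the inductive step. The only cosmetic difference is that the paper separates out $t=1$ and $t=2$ explicitly before invoking the general recurrence, whereas you unify them via the convention $V_0(\s,\b)=0$.
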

\begin{proof}
Let $R''$ be the subring of $r(\mathcal W)$ generated by $\hat{G}\cup\{y, z, x_{\b}|\b\in k^{\times}\}$.
Then $R'\subset R''$. By \prref{2.1} and the discussion above, it is enough to show that $[V_t(\s,\b)]\in R''$
for all $t\>1$, $\b\in k^{\times}$ and $[\s]\in\hat{G}/\langle\chi\rangle$. We proof it by induction on $t$.
By \prref{3.12}, one gets that $V_1(\s)\ot V_1(\e, \b)\cong V_1(\s,\b)$ and
$V_{s+1}(\e)\ot V_1(\s,\b)\cong (s-1)V_1(\s,\b)\oplus V_2(\s,\b)$
for any $\b\in k^{\times}$ and $[\s]\in\hat{G}/\langle\chi\rangle$,
and hence $[V_1(\s,\b)]=\s x_{\b}\in R''$ and $[V_2(\s,\b)]=(z-s+1)[V_1(\s,\b)]\in R''$.
Now let $t\>2$ and assume that $[V_m(\s,\b)]\in R''$ for all $1\<m\<t$, $[\s]\in\hat{G}/\langle\chi\rangle$ and $\b \in k^{\times}$.
Again by \prref{3.12}, we have
$V_{s+1}(\epsilon)\ot V_t(\s,\b)\cong (s-1)V_t(\s,\b)\oplus V_{t-1}(\s,\b)\oplus V_{t+1}(\s,\b)$,
and so $[V_{t+1}(\s,\b)]=(z-s+1)[V_t(\s,\b)]-[V_{t-1}(\s,\b)]\in R''$.
This completes the proof.
\end{proof}

For $m\>1$, let $Q_m$ be the set of $H$-modules defined by
$$Q_m=\{\oplus_{i=1}^n l_iV_{t_i}(\s_i,\b_i)|l_i\>0, 1\<t_i\<m, [\s_i]\in\hat{G}/\langle\chi\rangle,\b_i\in k^{\times}, 1\<i\<n\}.$$

\begin{proposition}\prlabel{4.13}
The following set is a $\mathbb{Z}$- basis of $r(\mathcal W)$:
$$\{\l y^tz^m, \s z^mx_{\b}|0\<t\<s-1,\l\in \hat G, [\s]\in\hat{G}/\langle\chi\rangle, m\>0,\b\in k^{\times},\}$$
\end{proposition}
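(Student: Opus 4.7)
The plan is to reduce the proposition to two facts: (i) the subring $R'$ already has the proposed basis $\{\l y^t z^m\}$ for its nilpotent-type part, as noted in the discussion preceding \leref{4.12}; and (ii) the non-nilpotent classes $[V_t(\s,\b)]$ are related to $\s z^m x_\b$ by a unitriangular change of basis. Spanning will then follow from these together with \prref{2.2}, and independence will follow from the direct-sum decomposition $r(\mathcal W) = R' \oplus R''$ of $\mathbb Z$-modules, where $R''$ is the $\mathbb Z$-submodule spanned by the $[V_t(\s,\b)]$.

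First I would apply \prref{3.12} with $n = s+1$ to obtain $V_{s+1}(\e)\ot V_t(\s,\b) \cong (s-1)V_t(\s,\b)\oplus V_{t-1}(\s,\b)\oplus V_{t+1}(\s,\b)$ (with $V_0 := 0$). This rewrites as the three-term recurrence $[V_{t+1}(\s,\b)] = (z-s+1)[V_t(\s,\b)] - [V_{t-1}(\s,\b)]$, with initial data $[V_1(\s,\b)] = \s x_\b$. By induction $[V_t(\s,\b)] = p_t(z)\cdot\s x_\b$ for a monic $p_t \in \mathbb Z[z]$ of degree $t-1$. Since $\s z = z\s = [V_{s+1}(\s)]$ in $r(\mathcal W)$, I can commute $\s$ to the left, obtaining $[V_t(\s,\b)] = \sum_{m=0}^{t-1}a_{t,m}\,\s z^m x_\b$ with $a_{t,t-1} = 1$. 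Inverting this unitriangular system expresses each $\s z^m x_\b$ as an integer combination of $[V_j(\s,\b)]$ with $1 \< j \< m+1$, and the appearing classes all share the same $([\s], \b)$---blocks for distinct $([\s], \b)$ do not mix.

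Spanning is then immediate: every $[V_t(\l)]$ lies in $R'$ and hence in the $\mathbb Z$-span of the $\l y^t z^m$, while every $[V_t(\s,\b)]$ lies in the $\mathbb Z$-span of the $\s z^m x_\b$ by the recurrence. For linear independence, suppose $\sum c_{\l,t,m}\,\l y^t z^m + \sum d_{[\s],m,\b}\,\s z^m x_\b = 0$. The first sum lies in $R'$; by the inversion just described, the second lies in $R''$. By \prref{2.2} one has $R' \cap R'' = 0$, so each sum vanishes separately: the first forces all $c_{\l,t,m} = 0$ by the known basis of $R'$, and the second forces all $d_{[\s],m,\b} = 0$ by the unitriangular relation within each $([\s], \b)$-block. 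The principal subtlety is the non-commutativity of $r(\mathcal W)$---the elements $\s$ and $x_\b$ fail to commute in general---so one must verify that the products $\s z^m x_\b$ genuinely match the $\mathbb Z$-combinations coming from the recurrence; the key identity $\s z = z\s$, which amounts to $V_1(\s)\ot V_{s+1}(\e) \cong V_{s+1}(\e)\ot V_1(\s) \cong V_{s+1}(\s)$, is what legitimizes moving $\s$ freely past $z$ at each step.
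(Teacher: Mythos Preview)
Your argument is correct and structurally parallel to the paper's: both split $r(\mathcal W)$ as the $\mathbb Z$-module direct sum of $R'$ and the span $N$ (your $R''$) of the non-nilpotent classes, invoke the known basis of $R'$, and then establish a unitriangular relation between $\{[V_t(\s,\b)]\}$ and $\{\s z^m x_\b\}$ inside each $([\s],\b)$-block.

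The one methodological difference is in how the unitriangularity is obtained. The paper works in the direction $\s z^m x_\b \to [V_{m+1}(\s,\b)] + (\text{lower})$: it expands $z^m x_\b = [V_{s+1}(\e)^{\ot m}\ot V_1(\e,\b)]$ using \leref{4.9} (the structure of $V_{s+1}(\e)^{\ot m}$) together with \prref{3.12}, and reads off that the leading term is $[V_{m+1}(\e,\b)]$ with the remainder in $Q_m$. You instead work in the opposite direction $[V_t(\s,\b)] \to \s z^{t-1}x_\b + (\text{lower})$ via the three-term recurrence $[V_{t+1}(\s,\b)] = (z-s+1)[V_t(\s,\b)] - [V_{t-1}(\s,\b)]$, which yields $[V_t(\s,\b)] = p_t(z)\,\s x_\b$ with $p_t$ monic of degree $t-1$, and then invert. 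Your route is slightly more economical in that it bypasses \leref{4.9} entirely and uses only the single tensor identity $V_{s+1}(\e)\ot V_t(\s,\b)\cong (s-1)V_t\oplus V_{t-1}\oplus V_{t+1}$; the paper's route has the advantage of making the containment $\s z^m x_\b \in N$ immediate without an inversion step. Both are equally valid realizations of the same unitriangularity.
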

\begin{proof}
Let $N$ be the $\mathbb{Z}$-submodules of $r(\mathcal W)$ generated by $\{[V_t(\s,\b)]|t\>1, [\s]\in\hat{G}/\langle\chi\rangle,\b \in k^{\times}\}$.
Then $r(\mathcal W)=R'\oplus N$ as $\mathbb{Z}$-modules. By \prref{4.10}, it is enough to show that
$\{\s z^mx_{\b}|[\s]\in\hat{G}/\langle\chi\rangle, m\>0,\b\in k^{\times},\}$ is a
$\mathbb{Z}$-basis of $N$. For $m\>1$, by \leref{4.9} and \prref{3.12}, we have
$V_{s+1}(\epsilon)^{\ot m}\ot V_1(\epsilon,\b)
\cong V_{m+1}(\epsilon,\b)\oplus(s-1)V_m(\epsilon,\b)\oplus E_m\ot V_1(\epsilon,\b)$,
where $E_m\in P_m$, $P_m$ is given as in the last subsection. Again by \prref{3.12},
one can see that $(s-1)V_m(\epsilon,\b)\oplus E_m\ot V_1(\epsilon,\b)\in Q_m$, and
hence $\s z^m x_{\b}=[V_{m+1}(\s,\b)]+[H]\in N$ for some $H\in Q_m$, where $[\s]\in\hat{G}/\langle\chi\rangle$, $m\>1$ and $\b\in k^{\times}$.
By the proof of \leref{4.12}, $\s x_{\b}=[V_1(\s,\b)]\in N$.
It follows that $\{\s z^mx_{\b}|[\s]\in\hat{G}/\langle\chi\rangle, m\>0,\b\in k^{\times}\}$ is linearly independent over
$\mathbb{Z}$ and that $N$ is generated, as a $\mathbb Z$-module, by $\{\s z^mx_{\b}|[\s]\in\hat{G}/\langle\chi\rangle, m\>0,\b\in k^{\times}\}$.
This completes the proof.
\end{proof}

Since $k$ is an algebraically closed field of characteristic zero, the map $(\hspace{0.2cm})^{s'}: k^{\times}\ra k^{\times}$,
$\a\mapsto\a^{s'}$ is a (multiplicative) group epimorphism with the kernel $\langle\xi\rangle$, the subgroup of
$k^{\times}$ generated by $\xi$, where $\xi\in k$ is a root of unity of order $s'$.
Let $[\a]$ denote the image of $\a$ under the canonical epimorphism $k^{\times}\ra k^{\times}/\langle\xi\rangle$.
Then there is a group isomorphism $k^{\times}/\langle\xi\rangle\ra k^{\times}$, $[\a]\mapsto \a^{s'}$.
Identifying the group $k^{\times}$ with $k^{\times}/\langle\xi\rangle$ via the isomorphism,
we may denote $x_{\a^{s'}}$ by $x_{[\a]}$ for any $\a\in k^{\times}$. When $\ol{s}=s$ (or $s'=1$ equivalently),
$x_{[\a]}=x_{\a}$ for any $\a\in k^{\times}$.  By \leref{4.12} and \prref{4.13},
one gets the following corollary.

\begin{corollary}\colabel{4.14}
$(1)$ $r(\mathcal W)$ is generated, as a ring, by $\hat{G}\cup\{y, z, x_{[\a]}|[\a]\in k^{\times}/\langle\xi\rangle\}$.\\
$(2)$ $r(\mathcal W)$ has a $\mathbb{Z}$-basis as follows:
$$\{\l y^tz^m, \s z^mx_{[\a]}|0\<t\<s-1, m\>0, \l\in\hat{G}, [\s]\in\hat{G}/\langle\chi\rangle, [\a]\in k^{\times}/\langle\xi\rangle\}.$$
\end{corollary}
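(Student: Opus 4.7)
My plan is to observe that Corollary 4.14 is a direct reformulation of \leref{4.12} and \prref{4.13} obtained by passing from the parameter $\b\in k^\times$ to the parameter $[\a]\in k^\times/\langle\xi\rangle$ via the convention $x_{[\a]}:=x_{\a^{s'}}$. So the only substantive content is the verification that this relabeling is well-defined and bijective, after which the statements transfer term-by-term.

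First, I would justify the bijection. Since $k$ is algebraically closed of characteristic zero, every element of $k^\times$ possesses an $s'$-th root, so the group homomorphism $(\cdot)^{s'}\colon k^\times\to k^\times$ is surjective; its kernel is exactly $\langle\xi\rangle$ because $\b^{s'}=1$ iff $\b$ is an $s'$-th root of unity. This yields the group isomorphism $k^\times/\langle\xi\rangle\xrightarrow{\sim}k^\times$, $[\a]\mapsto \a^{s'}$, used in the convention preceding the corollary. In particular, as $[\a]$ ranges over $k^\times/\langle\xi\rangle$, the value $\b=\a^{s'}$ ranges bijectively over $k^\times$, so the sets
\[
\{x_\b\mid \b\in k^\times\} \quad\text{and}\quad \{x_{[\a]}\mid [\a]\in k^\times/\langle\xi\rangle\}
\]
coincide as subsets of $r(\mathcal W)$.

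For part (1), I would simply substitute this identification into the generating set produced by \leref{4.12}: $r(\mathcal W)$ is generated as a ring by $\hat{G}\cup\{y,z,x_\b\mid \b\in k^\times\}=\hat{G}\cup\{y,z,x_{[\a]}\mid [\a]\in k^\times/\langle\xi\rangle\}$. For part (2), the same substitution transforms the $\mathbb Z$-basis $\{\l y^tz^m,\s z^mx_\b\}$ from \prref{4.13} into $\{\l y^tz^m,\s z^mx_{[\a]}\}$ with the index sets as stated; linear independence and spanning are inherited because the map $\b\leftrightarrow[\a]$ is a bijection on the parameter set.

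There is no real obstacle here; the step is essentially bookkeeping, whose purpose is to prepare the parameter $[\a]\in k^\times/\langle\xi\rangle\cong k^\times$ for the upcoming description of $r(\mathcal W)$ as a quotient of the skew group ring $\mathbb Z[X]\sharp\hat{G}$, where a $\hat{G}$-action twisting the $\b$-parameter by $\l(a)^{\ol s}$ (as it appears in \leref{3.10}, \coref{3.11}, \prref{3.12} and \thref{3.14}) will need to be encoded cleanly.
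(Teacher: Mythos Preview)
Your proposal is correct and matches the paper's own justification: the paper simply states that the corollary follows from \leref{4.12} and \prref{4.13} together with the identification $x_{[\a]}:=x_{\a^{s'}}$ established in the paragraph immediately preceding the corollary. Your explicit verification of the bijection $k^\times/\langle\xi\rangle\cong k^\times$ is exactly the content of that paragraph, so there is nothing to add.
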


\begin{lemma}\lelabel{4.15}
Let $\l\in\hat{G}$ and $[\a], [\b]\in k^{\times}/\langle\xi\rangle$. Then the following equations hold in $r(\mathcal W)$.\\
$(1)$ $\chi x_{[\a]}=x_{[\a]}\chi=x_{[\a]}$.\\
$(2)$ $x_{[\a]}\l=\l x_{[\l(a)^s\a]}$.\\
$(3)$ $yx_{[\a]}=x_{[\a]}y=2x_{[\a]}$.\\
$(4)$ $zx_{[\a]}=x_{[\a]}z$.\\
$(5)$ $x_{[\a]}x_{[-\a]}=x_{[-\a]}x_{[\a]}=\sum_{1\<i\<s'-1}sx_{[(1-\xi^i)\a]}
+\sum_{i=0}^{\ol{s}-1}\sum_{j=0}^{[\frac{s-1}{2}]}(-1)^j\binom{s-1-j}{j}\chi^{i}y^{s-1-2j}$.\\
$(6)$ If $[\b]\neq[-\a]$, then $x_{[\a]}x_{[\b]}=x_{[\b]}x_{[\a]}=\sum_{i=0}^{s'-1}sx_{[\a+\b\xi^{i}]}$.
\end{lemma}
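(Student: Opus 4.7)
The plan is to prove each of the six identities by computing the corresponding tensor product via Proposition 3.12 or Theorem 3.14, and then, where necessary, invoking Lemma 4.7(1) from the preceding subsection to express certain nilpotent-type classes as polynomials in $\chi$ and $y$. Throughout, I use the identifications $[V_1(\l)]=\l$ and $x_{[\a]}=[V_1(\e,\a^{s'})]$, the equality $\chi(a)^{\ol s}=q^{-\ol s}=1$ (which holds because $\ol s=s's$ and $|q|=s$), and the isomorphism $V_t(\chi^j\s,\b)\cong V_t(\s,\b)$ built into Proposition 2.2.

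For (1)--(4), each identity is a direct application of Proposition 3.12 with $t=1$ and $n\in\{1,2,s+1\}$. For instance, (1) reduces to $V_1(\chi)\ot V_1(\e,\a^{s'})\cong V_1(\chi,\a^{s'})\cong V_1(\e,\a^{s'})$; (3) reads $V_2(\e)\ot V_1(\e,\a^{s'})\cong 2V_1(\e,\a^{s'})$ (to be verified separately for $s=2$ and $s\>3$); and (4) gives $V_{s+1}(\e)\ot V_1(\e,\a^{s'})\cong(s-1)V_1(\e,\a^{s'})\op V_2(\e,\a^{s'})$. The opposite-order formula in Proposition 3.12 introduces a factor $\l(a)^{\ol s}$ on the scalar parameter, which is responsible for the twist $[\l(a)^s\a]$ in (2) since $\l(a)^{\ol s}=(\l(a)^s)^{s'}$, while in (1), (3), (4) this factor equals $1$, so the two orders agree.

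Part (5) is the main obstacle. Since $(-\a)^{s'}+(-1)^{s'+1}\a^{s'}=0$, we fall into case (2) of Theorem 3.14 with $n=t=1$, $\l=\e$. Choosing $\theta=\a$ and $\eta=-\a$ yields $\a_j=(\a(1-\xi^{j-1}))^{s'}$, so $j_0=1$ and the non-zero summands contribute $\sum_{1\<i\<s'-1}sx_{[\a(1-\xi^i)]}$. The residual summand $\sum_{j=0}^{\ol s-1}[V_s(\chi^j)]$ equals $(\sum_{j=0}^{\ol s-1}\chi^j)[V_s(\e)]$, into which I substitute the formula for $[V_s(\e)]$ from Lemma 4.7(1) (valid inside the subring $R'\ss r(\mathcal W)$ by the opening remarks of this subsection); pulling the $\chi$-sum through and reindexing produces precisely the second sum stated. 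Commutativity $x_{[\a]}x_{[-\a]}=x_{[-\a]}x_{[\a]}$ is not automatic, since swapping $\theta$ and $\eta$ alters the parametrisation; I verify it via the identity $\xi^i-1=\xi^i(1-\xi^{s'-i})$, which gives $[-\a(1-\xi^i)]=[\a(1-\xi^{s'-i})]$ in $k^\ti/\langle\xi\rangle$, so that the substitution $i\mapsto s'-i$ matches the two sums.

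Part (6) is Theorem 3.14(1) applied with $n=t=1$, $\l=\e$, $\theta=\a$, $\eta=\b$: the eigenvalues $\a_j=(\a+\b\xi^{j-1})^{s'}$ yield $x_{[\a]}x_{[\b]}=\sum_{i=0}^{s'-1}sx_{[\a+\b\xi^i]}$ directly. Commutativity follows from $\b+\a\xi^j=\xi^j(\a+\b\xi^{-j})$, hence $[\b+\a\xi^j]=[\a+\b\xi^{-j}]$ in $k^\ti/\langle\xi\rangle$, and a reindexing $j\mapsto -j$ equates the two sums.
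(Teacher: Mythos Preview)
Your proof is correct and follows essentially the same route as the paper's, which cites \prref{3.12} for (1)--(4) and \leref{3.13} together with \leref{4.7}(1) for (5)--(6); your use of \thref{3.14} with $n=t=1$ is precisely \leref{3.13}, so there is no real divergence. You supply considerably more detail than the paper --- the explicit choices of $\theta,\eta$, the identification of $j_0$, the commutativity checks via $\xi^i-1=\xi^i(1-\xi^{s'-i})$ and $\b+\a\xi^j=\xi^j(\a+\b\xi^{-j})$, and the reindexing argument replacing the paper's one-line remark that $\sum_{i=0}^{\ol s-1}\chi^{i+j}=\sum_{i=0}^{\ol s-1}\chi^i$ --- all of which are sound and welcome.
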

\begin{proof}
Since $V_1(\chi\s,\a)\cong V_1(\s,\a)$, (1)-(4) follow from \prref{3.12},
(5)-(6) follow from \leref{3.13}, \leref{4.7}(1) and \cite[Lemma 3.8]{SunChen}
since $\sum_{i=0}^{\ol{s}-1}\chi^{i+j}=\sum_{i=0}^{\ol{s}-1}\chi^{i}$ for any $j\in\mathbb Z$.
\end{proof}

Let $X=\{y,z,x_{[\a]}|[\a]\in k^{\times}/\langle\xi\rangle\}$ and $\mathbb{Z}[X]$ be the corresponding
polynomial ring. Then there is a right action of $\hat{G}$ on $\mathbb{Z}[X]$, which can be described as follows.
For any $\l\in\hat{G}$, one can define a map $(\hspace{0.15cm})^{\l}: X\ra X$ by $y^{\l}=y$, $z^{\l}=z$
and $x_{[\a]}^{\l}=x_{[\l(a)^s\a]}$ for any $[\a]\in k^{\times}/\langle\xi\rangle$.
Obviously, $(\hspace{0.15cm})^{\l}$ is a bijection. Thus, $(\hspace{0.15cm})^{\l}$ induces a ring automorphism of $\mathbb{Z}[X]$,
denoted still by $(\hspace{0.15cm})^{\l}: \mathbb{Z}[X]\ra \mathbb{Z}[X]$, $r\mapsto r^{\l}$.
It is easy to see that $(r^{\l})^{\s}=r^{\l\s}$ and $r^{\e}=r$ for all $r\in\mathbb{Z}[X]$ and $\l, \s\in\hat{G}$.
Hence the group $\hat G$ acts on the ring $\mathbb{Z}[X]$ from the right. Moreover,
$(\hspace{0.15cm})^{\chi}$ is the identity map on $\mathbb{Z}[X]$ by $\chi(a)^s=1$.

Now one can form a skew group ring $\mathbb{Z}[X]\sharp\hat{G}$ (see \cite[p.22]{McRo} for the definition of a skew group ring).
$\mathbb{Z}[X]\sharp\hat{G}$ is a free right $\mathbb{Z}[X]$-module with $\hat{G}$ as a basis, and with the multiplication
defined by $(\l r)(\s t)=(\l\s)(r^{\s}t)$ for $\l, \s\in\hat{G}$, $r, t\in\mathbb{Z}[X]$.
Obviously, $\mathbb{Z}[X]\sharp\hat{G}$ contains $\hat G$ as a subgroup of its group of units, and $\mathbb{Z}[X]$ as a subring,
under the identifications $\l=\l 1$, $\l\in\hat{G}$, and $r=\e r$, $r\in \mathbb{Z}[X]$, respectively.
Note that each element of $(\mathbb{Z}[X]\sharp\hat{G})$ has a unique expression as
$\sum_{\l\in\hat{G}}\l r_{\l}$ with $r_{\l}\in \mathbb{Z}[X]$ being almost all zero.

Let $J$ be the ideal of $\mathbb{Z}[X]\sharp\hat{G}$ generated by the following subset
$$U=\left\{\left.\begin{array}{l}
y^{s}-(1+\chi)\sum_{i=0}^{[\frac{s-1}{2}]}(-1)^i\binom{s-1-i}{i}\chi^iy^{s-1-2i}\\
-\sum_{1\<i\<[\frac{s-1}{2}]}\sum_{j=0}^{[\frac{s-2i-1}{2}]}(-1)^j\frac{s-2i}{s-i}\binom{s-1}{i}\binom{s-2i-1-j}{j}\chi^{i+j}y^{s-2i-2j},\\
\chi x_{[\a]}-x_{[\a]}, yx_{[\a]}-2x_{[\a]}, x_{[\a]}x_{[\b]}-\sum_{i=0}^{s'-1}sx_{[\a+\b\xi^{i}]},\\
x_{[\a]}x_{[-\a]}-\sum_{1\<i\<s'-1}sx_{[(1-\xi^i)\a]}\\
-\sum_{i=0}^{\ol{s}-1}\sum_{j=0}^{[\frac{s-1}{2}]}(-1)^j\binom{s-1-j}{j}\chi^{i}y^{s-1-2j}\\
\end{array}
\right|
\begin{array}{l}
[\a], [\b]\in k^{\times}/\langle\chi\rangle\\
\text{with }[\b]\neq[-\a]\\
\end{array}
\right\}.$$

\begin{theorem}\label{4.4}
The Green ring $r(\mathcal W)$ is isomorphic to the quotient ring $(\mathbb{Z}[X]\sharp\hat{G})/J$.
\end{theorem}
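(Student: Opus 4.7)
The plan is to construct a surjective ring homomorphism $\Phi : \mathbb{Z}[X]\sharp\hat{G} \to r(\mathcal W)$, verify that its kernel contains $J$, and then show that the induced map on the quotient is injective via a spanning argument matching the $\mathbb{Z}$-basis of $r(\mathcal W)$ provided by \coref{4.14}(2).

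First I would define a ring homomorphism $\phi : \mathbb{Z}[X] \to r(\mathcal W)$ by sending $y \mapsto [V_2(\e)]$, $z \mapsto [V_{s+1}(\e)]$ and each $x_{[\a]} \mapsto [V_1(\e, \a^{s'})]$. This is well-defined because its images pairwise commute in $r(\mathcal W)$: commutativity of $y$ and $z$ holds in the subring $R'$, and Lemma 4.15(3)--(6) furnishes commutativity of each $x_{[\a]}$ with $y$, $z$, and with every $x_{[\b]}$. Combining $\phi$ with the group embedding $\hat G \hookrightarrow r(\mathcal W)^{\times}$, $\l\mapsto [V_1(\l)]$, from \leref{4.1}, the skew compatibility $\Phi(\l)^{-1}\phi(r)\Phi(\l) = \phi(r^{\l})$ for $r \in X$ and $\l \in \hat G$ holds trivially when $r \in \{y,z\}$, since nilpotent-type tensor products commute with every $[V_1(\l)]$, and reduces to \leref{4.15}(2) when $r = x_{[\a]}$. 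By the universal property of the skew group ring, $\phi$ and the embedding of $\hat G$ assemble into a ring homomorphism $\Phi : \mathbb{Z}[X]\sharp\hat G \to r(\mathcal W)$.

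Next I would verify $J \subseteq \ker\Phi$: the first generator of $J$ vanishes under $\Phi$ by \leref{4.8}, while the remaining generators are exactly parts (1), (3), (5), (6) of \leref{4.15}. Hence $\Phi$ factors through a ring homomorphism $\overline\Phi : (\mathbb{Z}[X]\sharp\hat G)/J \to r(\mathcal W)$, which is surjective by \coref{4.14}(1). For injectivity, it suffices to show that the natural lifts in $(\mathbb{Z}[X]\sharp\hat G)/J$ of the basis $\mathcal B = \{\l y^tz^m,\ \s z^m x_{[\a]}\}$ from \coref{4.14}(2) already $\mathbb{Z}$-span the quotient; since $\overline\Phi$ carries these lifts bijectively to $\mathcal B$, it will then be a $\mathbb{Z}$-module isomorphism, hence a ring isomorphism. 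Given an element $\sum_{\l}\l\, p_{\l}(y, z, x_{[\a_1]}, \ldots)$ modulo $J$, I would reduce it in three stages: use \leref{4.15}(5)--(6) to rewrite each monomial containing two or more $x$-factors as a $\mathbb{Z}$-combination of monomials with strictly fewer $x$-factors plus polynomials in $y$, $z$, and $\chi$; use the identities $yx_{[\a]} = 2x_{[\a]}$ and the $y^s$-relation (\leref{4.8}) to strip excess powers of $y$ adjacent to any $x_{[\a]}$ and to bring the $y$-degree of pure polynomials in $y,z$ below $s$; finally, for each term of the form $\l\, z^m x_{[\a]}$, write $\l = \chi^n\s$ with $[\s] \in \hat G/\langle\chi\rangle$ and use the iterated relation $\chi^n x_{[\a]} \equiv x_{[\a]} \pmod J$ (from \leref{4.15}(1) by induction on $n$), noting that $\chi$ commutes with $z$ already in $\mathbb{Z}[X]\sharp\hat G$.

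The principal obstacle is the termination of the inductive reduction in the first stage: \leref{4.15}(5) replaces $x_{[\a]}x_{[-\a]}$ by both single-$x$ summands and polynomial summands in $\chi^i y^{s-1-2j}$, so one must order monomials by a suitable well-founded measure (for instance the lexicographic order on the pair (number of $x$-factors, total $y$-degree)) and verify that every application of the relation strictly decreases it or else produces terms already in the spanning set. Once the bookkeeping is complete, the lifts of $\mathcal B$ $\mathbb{Z}$-span $(\mathbb{Z}[X]\sharp\hat G)/J$, and since they are mapped bijectively onto a $\mathbb{Z}$-basis of $r(\mathcal W)$, the map $\overline\Phi$ is the desired isomorphism.
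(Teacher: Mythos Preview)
Your proposal is correct and follows essentially the same route as the paper's proof: build $\Phi$ from the commutative-image map $\mathbb{Z}[X]\to r(\mathcal W)$ together with the embedding of $\hat G$, check $J\subseteq\ker\Phi$ via \leref{4.8} and \leref{4.15}, and then argue that the cosets of $\{\l y^t z^m,\ \s z^m x_{[\a]}\}$ span $(\mathbb{Z}[X]\sharp\hat G)/J$ so that $\overline\Phi$ is bijective by \coref{4.14}(2). The only difference is one of emphasis: the paper dispatches the spanning step in a single sentence (``It follows from the definition of $J$ that\ldots''), whereas you spell out the reduction and worry about termination; in fact the termination is immediate because each application of the $x_{[\a]}x_{[\b]}$- or $x_{[\a]}x_{[-\a]}$-relation strictly lowers the number of $x$-factors, after which the relations $yx_{[\a]}-2x_{[\a]}$, $\chi x_{[\a]}-x_{[\a]}$ and the $y^s$-relation finish the job independently, so the well-founded measure you propose is not really needed.
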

\begin{proof}
By \leref{4.15}(3)-(6) and the paragraph before \leref{4.12}, one knows that for any $\a, \b\in k^{\times}$, the elements
$[V_2(\e)]$, $[V_{s+1}(\e)]$, $[V_1(\e, \a^{s'})]$ and $[V_1(\e, \b^{s'})]$ are pairwise commutative in $r(\mathcal W)$.
Hence there is a ring homomorphism $f$ from $\mathbb{Z}[X]$ to $r(\mathcal W)$ such that
$f(y)=[V_2(\e)]$, $f(z)=[V_{s+1}(\e)]$ and $f(x_{[\a]})=[V_1(\e, \a^{s'})]$ for all $[\a]\in k^{\times}/\langle\chi\rangle$.
Define a map $\phi: \mathbb{Z}[X]\sharp\hat{G}\ra r(\mathcal W)$ by
$\phi(\sum_{\l\in\hat{G}}\l r_{\l})=\sum_{\l\in\hat{G}}\l f(r_{\l})$.
Obviously, $\phi$ is a $\mathbb Z$-module map.
By \leref{4.15}(2) and the paragraph before \leref{4.12},  we have $f(y)\l=\l f(y)=\l f(y^{\l})$, $f(z)\l=\l f(z)=\l f(z^{\l})$
and $f(x_{[\a]})\l=\l f(x_{[\l(a)^s\a]})=\l f(x_{[\a]}^{\l})$
for all $\l\in\hat{G}$ and $[\a]\in k^{\times}/\langle\chi\rangle$.
Hence $f(r)\l=\l f(r^{\l})$ for all $\l\in\hat{G}$ and $r\in\mathbb{Z}[X]$.
Thus, we have $\phi(\l r)\phi(\s t)=\l f(r)\s f(t)=\l\s f(r^{\s})f(t)=\l\s f(r^{\s}t)=\phi((\l\s)(r^{\s}t))=\phi((\l r)(\s t))$
for all $\l, \s\in\hat{G}$ and $r, t\in\mathbb{Z}[X]$, and so $\phi$ is a ring homomorphism.
Then by \coref{4.14}(1), one can see that $\phi$ is a ring epimorphism.
By \leref{4.8} and \leref{4.15}(1), (3), (5)-(6),
one can check that $\phi(u)=0$ for any $u\in U$, and so $\phi(J)=0$.
Thus, $\phi$ induces a ring epimorphism $\ol{\phi}: (\mathbb{Z}[X]\sharp\hat{G})/J\ra r(\mathcal W)$
such that $\ol{\phi}(\ol{u})=\phi(u)$ for all $u\in\mathbb{Z}[X]\sharp\hat{G}$,
where $\ol{u}$ is the image of $u$ under the canonical epimorphism $\mathbb{Z}[X]\sharp\hat{G}\ra(\mathbb{Z}[X]\sharp\hat{G})/J$.
Note that $\{\l y^tz^mx_{[\a_1]}\cdots x_{[\a_n]}|\l\in\hat{G}, t, m, n\>0, [\a_1],\cdots, [\a_n]\in k^{\times}/\langle\chi\rangle\}$
is a $\mathbb Z$-basis of $\mathbb{Z}[X]\sharp\hat{G}$, here we regard $x_{[\a_1]}\cdots x_{[\a_n]}=1$ when $n=0$.
It follows from the definition of $J$ that $(\mathbb{Z}[X]\sharp\hat{G})/J$ is generated, as a $\mathbb{Z}$-module, by
$$\{\ol{\l}\ol{y}^t\ol{z}^m, \ol{\s} \ol{z}^m\ol{x_{[\a]}}|0\<t\<s-1, m\>0, \l\in\hat{G}, [\s]\in\hat{G}/\langle\chi\rangle, [\a]\in k^{\times}/\langle\chi\rangle\}.$$
By \coref{4.14}(2), the image of the above set under $\ol{\phi}$ is a $\mathbb Z$-basis of $r(\mathcal W)$.
This implies that $\ol{\phi}$ is an injection, and so it is a ring isomorphism.
\end{proof}

{\bf Acknowledgments}
This work is supported by National Natural Science Foundation of China (Grant No. 11571298, 11711530703).

\end{document}